\documentclass[10pt]{article}

\usepackage{latexsym, amssymb, amsmath, amsthm, amscd}
\usepackage[all,cmtip]{xy}
\usepackage{amsmath}
\usepackage{amsthm}
\usepackage{amssymb}
\usepackage{amsfonts}
\usepackage{amsrefs}
\usepackage{color,authblk}
\usepackage{tikz}
\usepackage{amscd} 
\usepackage{comment}
\usepackage{mathrsfs}
\usepackage{comment}
\usepackage{comment}
\usepackage[all]{xy}
\usepackage{graphicx}
\usepackage{authblk}
\usepackage{hyperref}
\usepackage{fullpage}
\usepackage[all,cmtip]{xy}
\usepackage{relsize}
\usepackage{amsmath,amscd}
\usepackage{tikz-cd}
\usepackage{tikz}
\usetikzlibrary{matrix}
\usepackage[mathcal]{euscript}
\usepackage{txfonts}

\usepackage{hyperref}

\numberwithin{equation}{section} \DeclareMathSizes{2}{10}{12}{13}
\makeatletter
\newcommand*{\doublerightarrow}[2]{\mathrel{
  \settowidth{\@tempdima}{$\scriptstyle#1$}
  \settowidth{\@tempdimb}{$\scriptstyle#2$}
  \ifdim\@tempdimb>\@tempdima \@tempdima=\@tempdimb\fi
  \mathop{\vcenter{
    \offinterlineskip\ialign{\hbox to\dimexpr\@tempdima+1em{##}\cr
    \rightarrowfill\cr\noalign{\kern.5ex}
    \rightarrowfill\cr}}}\limits^{\!#1}_{\!#2}}}
\newcommand*{\triplerightarrow}[1]{\mathrel{
  \settowidth{\@tempdima}{$\scriptstyle#1$}
  \mathop{\vcenter{
    \offinterlineskip\ialign{\hbox to\dimexpr\@tempdima+1em{##}\cr
    \rightarrowfill\cr\noalign{\kern.5ex}
    \rightarrowfill\cr\noalign{\kern.5ex}
    \rightarrowfill\cr}}}\limits^{\!#1}}}
\makeatother

\newtheorem{thm}{Proposition}[section]
\newtheorem{Thm}[thm]{Theorem}
\newtheorem{rem}[thm]{Remark}

\newtheorem{cor}[thm]{Corollary}

\newtheorem{lem}[thm]{Lemma}
\newtheorem{defn}[thm]{Definition}

\title{Coalgebra measurings, cyclic theory and homologies of matrix algebras}

\author{}

\author{Abhishek Banerjee \footnote{Department of Mathematics, Indian Institute of Science, Bangalore, India. Email: abhishekbanerjee1313@gmail.com} $\qquad\qquad$ Surjeet Kour \footnote{Department of Mathematics, Indian Institute of Technology, Delhi, India. Email: koursurjeet@gmail.com}}

\date{}

\begin{document}

\maketitle 

\medskip

\begin{abstract}
In this paper, we consider coalgebra measurings and the maps induced by them between Hochschild and cyclic homology of algebras.  We  show that these induced maps  are well behaved with respect to the various structures appearing on Hochschild and cyclic homology, such as the $\lambda$-decomposition, the product structure, as well as the module structure of Hochschild homology over cyclic homology. Thereafter, we relate the maps between homology theories of algebras induced by a coalgebra measuring to those induced between homologies of matrix algebras. This is done in the following contexts: (a) cyclic homology and the primitive part of Lie algebra homology of the matrix algebra, (b) Hochschild homology and the primitive part of  Leibniz homology of the matrix algebra, and (c) Dihedral homology of an involutive algebra and the primitive part of Lie algebra homology of symplectic or skew symmetric matrices.
\end{abstract}

\medskip

{MSC (2020) Subject classification: 16T15, 16E40}

\medskip
{Keywords: coalgebra measurings, Lie algebra homology, Leibniz homology, dihedral homology}

\section{Introduction} 

The classical notion of a coalgebra measuring (see Sweedler \cite{Sweed}) is that of a generalized map between  algebras. More precisely, let $K$ be a field and let $A$, $A'$ be unital $K$-algebras. Then, a coalgebra measuring $(C,\Phi)$ from $A$ to $A'$ consists of a $K$-coalgebra $C$ and a $K$-linear map $\Phi:C\longrightarrow Hom_K(A,A')$ that satisfies 
\begin{equation}\label{ms1.1}
\Phi(x)(ab)=\sum \Phi(x_{(1)})(a)\Phi(x_{(2)})(b)\qquad \Phi(x)(1_A)=\epsilon_C(x)1_{A'} \qquad \forall\textrm{ }a,b\in A,x\in C
\end{equation} In \eqref{ms1.1}, the coproduct $\Delta_C$ on $C$ is written as $\Delta_C(x)=\sum x_{(1)}\otimes x_{(2)}$ for each $x\in C$ and $\epsilon_C:C\longrightarrow K$ denotes the counit on $C$. If $x\in C$ is a group like element, i.e., $\Delta_C(x)=x\otimes x$ and $\epsilon_C(x)=1$, then $\Phi(x):A\longrightarrow A'$ recovers the usual notion of a $K$-algebra morphism from $A$ to $A'$. For $K$-algebras $A$, $A'$, there is also a universal measuring coalgebra $\mathcal M(A,A')$, often called the Sweedler Hom. The generalized Hom objects $\mathcal M(A,A')$ lead to an enrichment of $K$-algebras over $K$-coalgebras. The Sweedler Hom is also closely related to the notion of finite dual of an algebra, which gives an adjunction between algebras and coalgebras (see Porst and Street \cite{PS}). Accordingly, coalgebra measurings have been studied in a number of contexts, such as with differential graded algebras (see Anel and Joyal 
\cite{AJ}), with bialgebras (see Grunenfelder and Mastnak \cite{GM1}, \cite{GM2}), with entwining structures (see Brzezi\'{n}ski \cite{Brz}) as also with monoids in braided monoidal categories (see Hyland, L\'{o}pez Franco and Vasilakopoulou \cite{MLV}, Vasilakopoulou \cite{Vas}).  There is a similar notion of comodule measuring, which behaves like a generalized map between modules. For more on coalgebra measurings, we refer the reader, for instance, to \cite{Bat0}, \cite{Bat}, \cite{Vas1}, \cite{Lauve}. 

\smallskip
Since coalgebra measurings are seen as generalized morphisms between algebras, we showed in previous work in \cite{BK1}, \cite{BK2} that they can be used to construct maps between a wide range of (co)homology theories. For instance, if $\Phi:C\longrightarrow Hom_K(A,A')$ is a measuring by means of a cocommutative coalgebra $C$, there are induced maps
\begin{equation}\label{Ims1.2}
HH_\bullet^\Phi(x):HH_\bullet(A)\longrightarrow HH_\bullet(A')\qquad x\in C
\end{equation}
between Hochschild homology groups. Further, if the algebras $A$, $A'$ are commutative, the maps in \eqref{Ims1.2} induce a measuring between Hochschild homology rings 
$HH_\bullet(A)$ and $HH_\bullet(A')$ with respect to the shuffle product structures. Similarly, suppose that we have a coalgebra measuring between Lie algebras $(\mathfrak g,[\_\_,
\_\_])$ and $(\mathfrak g',[\_\_,\_\_]')$ 
\begin{equation}\label{Ims1.3}
\Psi: C\longrightarrow Hom_K(\mathfrak g,\mathfrak g') \qquad \Psi(x)([\alpha,\beta])=[\Psi(x_{(1)})(\alpha),\Psi(x_{(2)})(\beta)]'\qquad \alpha,\beta\in \mathfrak g, x\in C
\end{equation}
with $C$ cocommutative. Then, we showed in \cite{BK1} that the measuring $\Psi: C\longrightarrow Hom_K(\mathfrak g,\mathfrak g')$  induces maps between Chevalley-Eilenberg homologies of the Lie algebras $\mathfrak g$ and $\mathfrak g'$. In \cite{BK2}, we showed how coalgebra measurings induce maps between (co)homology theories for Hopf algebroids.

\smallskip
We have multiple aims in this paper. The first is to show that the maps $HH_\bullet^\Phi(x)$  in \eqref{Ims1.2} between Hochschild homologies, as previously defined in \cite{BK1}, extend to maps 
$HC_\bullet^\Phi(x): HC_\bullet(A)\longrightarrow HC_\bullet(A')$ between cyclic homology groups. We then show that the maps $HH_\bullet^\Phi(x)$  and $HC^\Phi_\bullet(x)$ are well behaved with respect to the various structures appearing on Hochschild and cyclic homology, such as the $\lambda$-decomposition, the product structure, as well as the module structure of Hochschild homology over cyclic homology.

\smallskip
Our second aim is to relate maps between homology theories of algebras induced by a coalgebra measuring to those induced between homology theories of Lie algebras or Leibniz algebras. In particular, we show that maps induced by measurings are compatible with the isomorphisms that identify cyclic homology with the primitive part of the Lie algebra homology of matrices. Similarly, we show that maps induced by measurings are compatible with the isomorphisms that identify Hochschild homology with the primitive part of the Leibniz algebra homology of matrices. Our final aim in this paper is to study coalgebra measurings between involutive algebras, in which case the cyclic homology is usually replaced with the dihedral homology. Accordingly, we show that maps induced by measurings are compatible with the isomorphisms that identify dihedral homology with the primitive part of the Lie algebra homology of symplectic matrices or that of skew symmetric matrices.

\smallskip
We mention that our motivation for studying maps between (co)homology theories induced by coalgebra measurings comes in part from an analogy with algebraic geometry. In algebraic geometry, one enlarges the category of varieties by adding generalized maps known as correspondences (see, for instance, \cite{WMV}). The correspondences allow us to capture at an algebraic level a much larger class of morphisms between (co)homology theories. These are then used to define the ``motive'' of an algebraic variety. Similarly, we note that coalgebra measurings linearize the category of algebras by adding generalized morphisms. Since an algebra may be seen as a ``noncommutative space,'' we were led to ask how coalgebra measurings capture morphisms between homology theories of algebras. 

\smallskip
We now describe the paper in more detail. We begin in Section 2 by showing that a coalgebra measuring $\Phi:C\longrightarrow Hom_K(A,A')$ between algebras $A$, $A'$, with $C$ cocommutative, induces maps  between cyclic modules. Accordingly, the first main result of this paper is as follows.

\begin{Thm}\label{hmT1.1} (see \ref{P1.1} and \ref{P2.2}) Let $C$ be a cocommutative $K$-coalgebra and let $\Phi:C\longrightarrow Hom_K(A,A')$ be a  measuring between algebras. For each $x\in C$,  we have induced maps
\begin{equation}\label{1.4ctr}
HH^\Phi_\bullet(x): HH_\bullet(A)\longrightarrow HH_\bullet(A')\qquad HC^\Phi_\bullet(x): HC_\bullet(A)\longrightarrow HC_\bullet(A')
\end{equation}
on Hochschild homology and cyclic homology groups respectively.  Further, the maps $HH^\Phi_\bullet(x)$ and 
$HC^\Phi_\bullet(x)$ in \eqref{1.4ctr} are compatible with the operators appearing in the  Connes periodicity sequences for the algebras $A$ and $A'$.

\end{Thm}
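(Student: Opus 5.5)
The plan is to build, for each $x\in C$, a map of cyclic $K$-modules between the cyclic bar constructions $C_\bullet(A)=\{A^{\otimes n+1}\}_{n\geq 0}$ and $C_\bullet(A')$, and then obtain both statements by functoriality. Concretely, in degree $n$ I would set
\begin{equation*}
\Phi_n(x)(a_0\otimes a_1\otimes\cdots\otimes a_n)=\sum \Phi(x_{(1)})(a_0)\otimes\Phi(x_{(2)})(a_1)\otimes\cdots\otimes \Phi(x_{(n+1)})(a_n),
\end{equation*}
using the iterated coproduct $\Delta^{(n)}(x)=\sum x_{(1)}\otimes\cdots\otimes x_{(n+1)}$. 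This is presumably the map already used in \cite{BK1} to define $HH^\Phi_\bullet(x)$, so the Hochschild part is essentially a recollection.

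The first step is to check that $\Phi_\bullet(x)$ commutes with the faces $d_i$, the degeneracies $s_j$ and the cyclic operator $t_n$. For a face $d_i$ with $0\leq i<n$, multiplying $a_i a_{i+1}$ and then applying $\Phi(x_{(i+1)})$ gives, by the measuring identity \eqref{ms1.1} and coassociativity, the same result as applying $\Phi$ to each factor and then multiplying. For a degeneracy, inserting $1_A$ is handled by $\Phi(y)(1_A)=\epsilon_C(y)1_{A'}$ together with the counit axiom. The last face $d_n$ multiplies $a_n a_0$, and $t_n$ moves $a_n$ to the front. In both of these the coalgebra index attached to $a_n$ ends up in front of the index attached to $a_0$. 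This is exactly where cocommutativity is needed: the elements $x_{(1)}\otimes\cdots\otimes x_{(n+1)}$ are invariant under cyclic permutation of the tensor factors, so the reordered sum equals the original one. I expect this bookkeeping, compatibility with $d_n$ and $t_n$ via cyclic invariance of the iterated coproduct, to be the only substantive point. Everything else is formal.

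Once $\Phi_\bullet(x)$ is known to be a morphism of cyclic modules, it commutes with $b$, $b'$, the norm $N$, $1-t$, the extra degeneracy, and Connes' operator $B=(1-t)sN$. It therefore induces a morphism of Tsygan's bicomplex $CC_{\bullet\bullet}(A)\to CC_{\bullet\bullet}(A')$, or equivalently of the $(b,B)$-bicomplex, and also of the Hochschild complex. Taking homology gives $HH^\Phi_\bullet(x)$ and $HC^\Phi_\bullet(x)$; this is Propositions \ref{P1.1} and \ref{P2.2}. If the characteristic is zero, one could instead use the Connes complex $C^\lambda_\bullet=C_\bullet/(1-t)$, but the bicomplex route works over any field.

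Finally, for the SBI sequence $\cdots\to HH_n\xrightarrow{I}HC_n\xrightarrow{S}HC_{n-2}\xrightarrow{B}HH_{n-1}\to\cdots$, I would proceed as follows. Each of the three maps comes from the short exact sequence of complexes $0\to CC^{\{2\}}_\bullet\to CC_\bullet\to CC_\bullet[2]\to 0$, obtained from the first two columns of the bicomplex, together with the quasi-isomorphism of $CC^{\{2\}}_\bullet$ with the Hochschild complex. Since the bicomplex morphism above preserves columns, it gives a morphism of these short exact sequences that is compatible with the quasi-isomorphisms. Naturality of the long exact homology sequence, including the connecting map, then yields the commuting ladder, i.e. compatibility of $HH^\Phi_\bullet(x)$ and $HC^\Phi_\bullet(x)$ with $I$, $S$ and $B$.
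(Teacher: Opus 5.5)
Your proposal is correct and follows the paper's argument essentially step for step. You use the same map $x_{(1)}(a_0)\otimes\cdots\otimes x_{(n+1)}(a_n)$ and show it is a morphism of cyclic modules, using the counit for the degeneracies and cocommutativity for $t_n$ and the last face. You then get the periodicity ladder from naturality of the long exact sequence of $0\to C^{cy}_{\bullet\bullet}(A)\{2\}\to C^{cy}_{\bullet\bullet}(A)\to C^{cy}_{\bullet\bullet}(A)[2,0]\to 0$. The only differences are minor: you verify compatibility with the face maps directly, where the paper cites \cite[Proposition 2.2]{BK1}, and you observe that the bicomplex route does not need characteristic zero.
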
 Thereafter, we show that the induced maps  $HH^\Phi_\bullet(x)$ and 
$HC^\Phi_\bullet(x)$ in Theorem \ref{hmT1.1} are compatible with the operators connecting the Hochschild and cyclic complexes to the module
of K$\ddot{\mbox{a}}$hler differentials, when the algebras involved are commutative. For Hochschild homology, this refers to the anti-symmetrization and projection operators (see Proposition \ref{P2.5gx})
\begin{equation}
\varepsilon_\bullet:\Omega_A^\bullet\longrightarrow HH_\bullet(A)\qquad \pi_\bullet: HH_\bullet(A)\longrightarrow \Omega_A^\bullet
\end{equation} For the cyclic homology, we show that the maps $HC_\bullet^\Phi(x)$ for each $x\in C$ are compatible with respect to the projections
\begin{equation}
HC_n(A)\xrightarrow{\qquad\bar{\pi}_n\qquad}\Omega^n_A/d\Omega^{n-1}_A\oplus HDR_A^{n-2}\oplus HDR^{n-4}_A\oplus \dots \qquad n\geq 0
\end{equation} where $HDR^\bullet_A$ is the de Rham cohomology obtained from the complex of K$\ddot{\mbox{a}}$hler differentials (see Proposition \ref{P2.6t}).

\smallskip
We come to the product structure on cyclic homology in Section 3. If $A$ is commutative, we know (see \cite[$\S$ 3]{LQ}) that there is a product 
\begin{equation}\label{1.5ctr}
\_\_\ast\_\_:  HC_{p}(A)\otimes HC_q(A)\longrightarrow HC_{p+q+1}(A)\qquad p,q\geq 0
\end{equation} If we shift the degree by setting $HC'_\bullet(A):=HC_{\bullet-1}(A)$, we know  that the product in \eqref{1.5ctr} makes cyclic homology into an associative graded algebra $(HC'_\bullet(A),\ast)$. Further, we know that the Connes' operator $B:HC'_\bullet(A)=HC_{\bullet-1}(A)\longrightarrow HH_\bullet(A)$ is a map of graded algebras, making the Hochschild homology into a module over the cyclic homology. The following result shows how a coalgebra measuring leads to a measuring between cyclic homology rings, as well as a comodule measuring at the level of Hochschild homology.

\begin{Thm}\label{hmT1.2} (see \ref{T3.2} and \ref{P3.3st}) Let $C$ be a cocommutative coalgebra and let $\Phi:C\longrightarrow Hom_K(A,A')$ be a measuring between commutative algebras. Then, we have

\smallskip
(a) The maps
$
HC'^\Phi_\bullet(x):HC'_\bullet(A)=HC_{\bullet-1}(A)\xrightarrow{ HC_{\bullet-1}^\Phi(x)}HC_{\bullet-1}(A')=HC'_{\bullet}(A')$, $x\in C$ 
 determine a coalgebra measuring
of algebras from $(HC'_\bullet(A),\ast)$ to $(HC'_\bullet(A'),\ast)$.

\smallskip
(b) The maps $HH_\bullet^\Phi(x):HH_\bullet(A)\longrightarrow HH_\bullet(A')$, $x\in C$ 
  determine a comodule measuring  from the $(HC'_\bullet(A),\ast)$-module $HH_\bullet(A)$ to the $(HC'_\bullet(A'),\ast)$-module $HH_\bullet(A')$, where $C$ is treated as a $C$-comodule.
\end{Thm}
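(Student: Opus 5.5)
We work at the chain level, constructing the product and the module action from shuffle-type operations on the Hochschild and cyclic complexes. By Theorem \ref{hmT1.1}, for each $x\in C$ the map $HH^\Phi_\bullet(x)$ is induced by the chain map
\[
\Phi_n(x)(a_0\otimes a_1\otimes\dots\otimes a_n)=\sum \Phi(x_{(1)})(a_0)\otimes\Phi(x_{(2)})(a_1)\otimes\dots\otimes\Phi(x_{(n+1)})(a_n).
\]
This map commutes with the Hochschild boundary $b$ because $\Phi$ is a measuring and $C$ is cocommutative. It also commutes with the cyclic operator $t$ by cocommutativity. Hence it commutes with $B$ and with the cyclic bicomplex (or Connes' complex $C^\lambda$), and this is what induces $HC^\Phi_\bullet(x)$.

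Recall from \cite[\S 3]{LQ} that for commutative $A$ the product $\ast$ on $HC'_\bullet(A)$ is realised as
\[
\alpha\ast\beta = \text{class of } \mathrm{sh}(B\alpha,\beta), \quad\text{or equivalently}\quad \mathrm{sh}(B\alpha,B\beta)\ \text{lifted via}\ B.
\]
Here $\mathrm{sh}$ is the shuffle product on $C_\bullet(A)$, and the formula is valid on the normalized mixed complex $(C_\bullet(A),b,B)$. Concretely, the Loday–Quillen product on cyclic homology is obtained from the shuffle product $\times$ on Hochschild chains together with the operator $B$, both of which are natural for algebra maps.

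The key step for (a) is to show the chain-level identity
\[
\Phi_{p+q}(x)(\omega\times\eta)=\sum \Phi_p(x_{(1)})(\omega)\times\Phi_q(x_{(2)})(\eta).
\]
This was essentially established in \cite{BK1}, where $HH^\Phi_\bullet$ was shown to be a measuring for the shuffle product. The proof is a direct check: a shuffle term
\[
a_0b_0\otimes c_{\sigma^{-1}(1)}\otimes\dots
\]
is sent to the product of $\Phi$-images, with the coproduct of $x$ distributed appropriately. Cocommutativity (coassociativity plus symmetry of iterated coproducts) then allows us to regroup the tensor factors of $\Delta^{(p+q+1)}(x)$ into the pattern $x_{(1)}$ for $\omega$ and $x_{(2)}$ for $\eta$. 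The factor $a_0b_0$ is handled using the multiplicativity of $\Phi$.

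Combining this identity with the fact that $\Phi_\bullet(x)$ commutes with $B$ and with the cyclic projection, passing to homology yields
\[
HC'^\Phi(x)(\alpha\ast\beta)=\sum HC'^\Phi(x_{(1)})(\alpha)\ast HC'^\Phi(x_{(2)})(\beta).
\]
For the unit condition, the unit of $HC'_\bullet$ lives in $HC_{-1}$ or is adjoined formally; either way, it is respected via $\Phi(x)(1)=\epsilon(x)1$.

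The main obstacle lies here. The LQ product is not literally a shuffle on cyclic chains: it involves a choice of lifting, for instance through the $(b,B)$ bicomplex, using the formula
\[
(\omega_0,\omega_2,\dots)\ast(\eta_0,\dots)=\sum\pm\,\omega_i\times B\eta_j\ \text{-type terms}.
\]
We plan to use an explicit bicomplex formula built only from $\times$ and $B$. Since $\Phi_\bullet(x)$ commutes with $B$ and measures $\times$, it then measures $\ast$ componentwise. We must check that the Sweedler indices work out across the sum, which again relies on cocommutativity.

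For (b), the module action is
\[
\alpha\cdot\omega=B\alpha\times\omega
\]
on Hochschild homology. Applying the two facts above gives
\[
HH^\Phi(x)(\alpha\cdot\omega)=\sum HC'^\Phi(x_{(1)})(\alpha)\cdot HH^\Phi(x_{(2)})(\omega),
\]
which is precisely the comodule measuring condition with $C$ regarded as a $C$-comodule via $\Delta$.
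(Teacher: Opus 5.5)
Your proposal follows the paper's proof. On the normalized mixed complex $(\bar C_\bullet(A),b,B)$, to which $C^\Phi_\bullet(x)$ descends because $\Phi(x)(1)=\epsilon_C(x)1$, the product $\ast$ is given componentwise by terms $B\tilde a_p\times\tilde b_j$ and the module action by $B\tilde a_{p-1}\times\tilde b_q$, so compatibility with $B$ and the chain-level shuffle measuring identity of \cite{BK1} give (a) and (b) directly, with only linearity needed to match the Sweedler indices across components. The only correction is that there is no unit condition: $HC'_0(A)=HC_{-1}(A)=0$, so $(HC'_\bullet(A),\ast)$ is non-unital and the measuring only has to satisfy the multiplicative condition.
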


Using Theorem \ref{hmT1.2}, we construct an enrichment $c\widetilde{ALG}_K$ of commutative $K$-algebras over the symmetric monoidal category 
$coCoalg_K$ of cocommutative $K$-coalgebras. For commutative $K$-algebras $A$, $A'$, the Hom object in $c\widetilde{ALG}_K$ is taken to be 
\begin{equation} c\widetilde{ALG}_K(A,A'):=\mathcal M_c((HC'_\bullet(A),\ast),(HC'_\bullet(A'),\ast))
\end{equation} where $\mathcal M_c(\_\_,\_\_)$ denotes the universal cocommutative
measuring coalgebra. Further, we show that there is a $coCoalg_K$-enriched functor $cALG_K\longrightarrow c\widetilde{ALG}_K$, where $cALG_K$ is the enrichment of commutative $K$-algebras obtained by taking $cALG_K(A,A'):=\mathcal M_c(A,A')$ (see Theorem \ref{T3.6}). 

\smallskip
 For a commutative $K$-algebra $A$, we now recall the $\lambda$-decompositions (see, for instance, \cite[$\S$ 4]{Lod}) for Hochschild and cyclic homology
\begin{equation} \label{1/10ctr}
HH_n(A)=HH_n^{(1)}(A)\oplus ... \oplus HH_n^{(n)}(A)\qquad HC_n(A)=HC_n^{(1)}(A)\oplus ... \oplus HC_n^{(n)}(A)  
\end{equation}  Our objective in Section 4  is to show that the maps induced by coalgebra measurings are compatible with the direct sum decompositions in \eqref{1/10ctr}. In this respect, our main result is as follows.

\begin{Thm}\label{hmT1.3} (see \ref{T4.4}, \ref{T4.6}, \ref{C4.7})
Let $C$ be a cocommutative $K$-coalgebra and let $\Phi:C\longrightarrow Hom_K(A,A')$ be a  measuring between commutative algebras. Then, for  each $x\in C$ and $i,n\geq 0$, we have induced maps 
\begin{equation}
HH^{\Phi,(i)}_\bullet(x): HH_\bullet^{(i)}(A)\longrightarrow HH_\bullet^{(i)}(A') \qquad HC^{\Phi,(i)}_\bullet(x): HC_\bullet^{(i)}(A)\longrightarrow HC_\bullet^{(i)}(A')
\end{equation}
on direct summands appearing in the $\lambda$-decomposition of Hochschild and cyclic homology groups. Further, these maps are compatible with the operators appearing in the  Connes periodicity sequences for the algebras $A$ and $A'$.  
\end{Thm}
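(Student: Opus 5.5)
The plan is to work at the level of chain complexes and use the Eulerian idempotents. First recall, from the construction behind Theorem \ref{hmT1.1}, how $HH^\Phi_\bullet(x)$ is defined on the Hochschild complex $C_n(A)=A^{\otimes n+1}$:
\begin{equation*}
\Phi_n(x)(a_0\otimes\dots\otimes a_n)=\sum \Phi(x_{(1)})(a_0)\otimes\dots\otimes\Phi(x_{(n+1)})(a_n),
\end{equation*}
and check again that it commutes with the face maps and with the cyclic operator $t_n$. Commutation with $t_n$ is exactly where cocommutativity of $C$ is used.

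\textbf{Step 1: compatibility with the group algebra action.} For commutative $A$, the $\lambda$-decomposition comes from an action of the group algebra $K[S_n]$ on $C_n(A)$. A permutation $\sigma$ acts by
\begin{equation*}
a_0\otimes a_1\otimes\dots\otimes a_n\mapsto a_0\otimes a_{\sigma^{-1}(1)}\otimes\dots\otimes a_{\sigma^{-1}(n)}.
\end{equation*}
The Eulerian idempotents $e_n^{(i)}\in \mathbb Q[S_n]$ are defined from this action. Since $C$ is cocommutative, the iterated coproduct $\Delta^{(n)}(x)$ is invariant under permuting its tensor factors. This gives
\begin{equation*}
\Phi_n(x)\circ\sigma=\sigma\circ\Phi_n(x)\quad\text{for all } \sigma\in S_n,
\end{equation*}
and hence, by linearity, $\Phi_n(x)\circ e_n^{(i)}=e_n^{(i)}\circ\Phi_n(x)$. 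We work over $K\supseteq\mathbb Q$, as is implicit in the existence of the $\lambda$-decomposition.

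\textbf{Step 2: Hochschild summands.} Recall (see \cite[$\S$ 4]{Lod}) that the $e^{(i)}_n$ commute with the Hochschild boundary $b$. So $C_\bullet(A)$ splits as a direct sum of subcomplexes $e^{(i)}C_\bullet(A)$, and $HH^{(i)}_n(A)$ is the homology of the $i$-th summand. By Step 1, $\Phi_\bullet(x)$ maps $e^{(i)}C_\bullet(A)$ into $e^{(i)}C_\bullet(A')$. We then define $HH^{\Phi,(i)}_\bullet(x)$ as the induced map on homology; it is the restriction of $HH^\Phi_\bullet(x)$ to the $i$-th summand.

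\textbf{Step 3: cyclic summands and Connes periodicity.} Rather than the cyclic complex, which carries no evident $\lambda$-action, we use Loday's description of the cyclic $\lambda$-decomposition through the bicomplex $\mathcal B(A)$ with differentials $b$ and $B$. The key relation is
\begin{equation*}
e^{(i)}_n B=B\, e^{(i-1)}_{n-1}.
\end{equation*}
Consequently, $HC^{(i)}_n(A)$ is the homology of the total complex
\begin{equation*}
e^{(i)}C_n\oplus e^{(i-1)}C_{n-2}\oplus e^{(i-2)}C_{n-4}\oplus\cdots.
\end{equation*}
By Step 1, $\Phi_\bullet(x)$ commutes with each $e^{(j)}$. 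We also need that $\Phi_\bullet(x)$ commutes with $B$. This follows from commutation with $t$ and with the extra degeneracy $s$, using $\Phi(x)(1)=\epsilon(x)1$ and the counit identity. Applying $\Phi_\bullet(x)$ componentwise therefore gives a map of these sub-bicomplexes, and hence $HC^{\Phi,(i)}_\bullet(x)$. Summed over $i$, this recovers $HC^\Phi_\bullet(x)$ under the identification of $HC$ via $\mathcal B(A)$ used in Theorem \ref{hmT1.1}.

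The periodicity sequence
\begin{equation*}
\cdots\to HH^{(i)}_n\xrightarrow{I} HC^{(i)}_n\xrightarrow{S} HC^{(i-1)}_{n-2}\xrightarrow{B} HH^{(i)}_{n-1}\to\cdots
\end{equation*}
comes from the short exact sequence of $i$-th sub-bicomplexes: the first column, the whole bicomplex, and the bicomplex shifted by $(1,1)$ with index $i-1$. Our componentwise map is a map of these short exact sequences. Naturality of the connecting homomorphism then gives compatibility with $I$, $S$ and $B$.

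The main obstacle I expect is in Step 3. One must verify that $\Phi_\bullet(x)$ commutes with $B$ on normalized chains, which involves the unit condition $\Phi(x)(1)=\epsilon(x)1$ and a coproduct reindexing. One must also check that the comparison isomorphism between the cyclic complex and $\mathcal B(A)$ is natural with respect to $\Phi_\bullet(x)$, so that the new maps agree with $HC^\Phi_\bullet(x)$ from Theorem \ref{hmT1.1}. I would handle the latter by writing the comparison map explicitly in terms of $t$, $s$ and the norm operator, all of which commute with $\Phi_\bullet(x)$.
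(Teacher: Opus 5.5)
Your proposal is correct, and its overall architecture is the same as the paper's. You intertwine the idempotents with $C^\Phi_n(x)$ at chain level and restrict to the summands $C^{(i)}_\bullet$ (Theorem \ref{T4.4}). You then pass to the normalized sub-mixed complexes $\overline{\mathscr BC}^{(i)}_{\bullet\bullet}$ (Theorem \ref{T4.6}). Finally you get periodicity from the short exact sequence $0\to\bar C^{hoc,(i)}_\bullet\to\overline{\mathscr BC}^{(i)}_{\bullet\bullet}\to\overline{\mathscr BC}^{(i-1)}_{\bullet\bullet}[2]\to 0$ (Corollary \ref{C4.7}).

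The genuine difference is your Step 1, which is a different key lemma. The paper proves $C^\Phi_n(x)\circ(A\otimes e^{(i)}_n)=(A'\otimes e'^{(i)}_n)\circ C^\Phi_n(x)$ (Lemma \ref{Lm43}) structurally, in three steps:
\begin{itemize}
\item it builds an induced measuring $\mathbb T(\Phi)$ between the cotensor Hopf algebras (Lemma \ref{Lm42});
\item it shows that convolution products of intertwined endomorphisms stay intertwined (Lemma \ref{Lm4.1});
\item it applies this to the convolution logarithm of $Id-\iota_{\mathcal H}\epsilon_{\mathcal H}$ and its convolution powers.
\end{itemize}
You instead use that $e^{(i)}_n$ is an element of $\mathbb Q[S_n]$ acting by (signed) permutations of $a_1,\dots,a_n$. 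Combined with the $S_n$-equivariance of $C^\Phi_n(x)$, which comes from cocommutativity (the paper's identity \eqref{4.11x}), linearity finishes the argument.

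The paper's route works straight from its convolution definition of $e^{(i)}$ and never needs to identify it with a group-algebra element. Your route needs that standard identification (\cite[$\S$ 4.5]{Lod}), but it is shorter. It also avoids a point that Lemma \ref{Lm4.1} glosses over. The penultimate equality there requires $\Delta_{\mathcal H'}(\Psi(x)(h))=\Psi(x_{(1)})(h_{(1)})\otimes\Psi(x_{(2)})(h_{(2)})$, which does not follow from the measuring condition. It does hold for $\mathbb T(\Phi)$ with the cut coproduct, by coassociativity of $C$, so Lemma \ref{Lm43} itself is fine; your argument simply never needs it.

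One thing to tidy in Step 3: the relation $e^{(i)}_nB=Be^{(i-1)}_{n-1}$ holds on the normalized complex $\bar C_\bullet(A)$, not on $C_\bullet(A)$. It already fails in degree $2$: the unnormalized $B(a_0,a_1)$ contains the degenerate terms $(a_0,1,a_1)-(a_1,1,a_0)$. These are not antisymmetric in positions $1,2$, so $B(a_0,a_1)\notin C^{(2)}_2$. Your total complex should therefore read $\bar C^{(i)}_n\oplus\bar C^{(i-1)}_{n-2}\oplus\cdots$. The fix costs nothing. Since $e^{(i)}_n$ is a combination of permutations of positions $1,\dots,n$, it preserves degenerate chains. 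And $C^\Phi_n(x)$ descends to $\bar C_n$ because $\Phi(x)(1)=\epsilon_C(x)1$, exactly as in Proposition \ref{P3.1}.
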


In Section 5, we begin relating the maps on Hochschild and cyclic homology induced by a coalgebra measuring to those induced on Lie algebra homology or Leibniz homology of matrix algebras. As mentioned before, we have shown in \cite{BK1} that a coalgebra measuring $\Psi: C\longrightarrow Hom_K(\mathfrak g,\mathfrak g')$ between Lie algebras in the sense of \eqref{Ims1.3} induces a map $H^\Psi_\bullet(x):H_\bullet(\mathfrak g)\longrightarrow H_\bullet(\mathfrak g')$ between Chevalley-Eilenberg homologies for each  $x\in C$. Using the Leibniz homology introduced by Loday \cite[$\S$ 6.3]{Lod3}, we show that there are similar maps $HL^\Psi_\bullet(x):HL_\bullet(\mathfrak g)\longrightarrow HL_\bullet(\mathfrak g')$ for a coalgebra measuring $\Psi: C\longrightarrow Hom_K(\mathfrak g,\mathfrak g')$ between Leibniz algebras $\mathfrak g$ and $\mathfrak g'$. If $\mathfrak g$ is a Lie algebra (resp. a Leibniz algebra) we know that the Chevalley-Eilenberg homology $H_\bullet(\mathfrak g)$ (resp. the Leibniz homology $HL_\bullet(\mathfrak g)$) is equipped with a coproduct $\Delta_{\mathfrak g}: H_\bullet(\mathfrak g)
\longrightarrow H_\bullet(\mathfrak g)\otimes H_\bullet(\mathfrak g)$ (resp. a coproduct  $\Delta_{\mathfrak g}: HL_\bullet(\mathfrak g)
\longrightarrow HL_\bullet(\mathfrak g)\boxtimes HL_\bullet(\mathfrak g)$) induced by the diagonal embedding $\mathfrak g\longrightarrow 
\mathfrak g\oplus \mathfrak g$. We show that these coproducts are well behaved with respect to a coalgebra measuring  $\Psi: C\longrightarrow Hom_K(\mathfrak g,\mathfrak g')$, i.e., we have commutative diagrams (see Proposition \ref{P5.1} and Proposition \ref{P5.5}) for each $x\in C$
\begin{equation}
\begin{array}{ccc}
\begin{CD}
H_\bullet(\mathfrak g) @>\Delta_{\mathfrak g}>> H_\bullet(\mathfrak g) \otimes H_\bullet(\mathfrak g) \\
@VH^\Psi_\bullet(x)VV @VVH^\Psi_\bullet(x_{(1)})\otimes H^\Psi_\bullet(x_{(2)})V\\
H_\bullet(\mathfrak g') @>\Delta_{\mathfrak g'}>> H_\bullet(\mathfrak g') \otimes H_\bullet(\mathfrak g') \\
\end{CD} &\qquad \qquad & \begin{CD}
HL_\bullet(\mathfrak g) @>\Delta_{\mathfrak g}>> HL_\bullet(\mathfrak g) \boxtimes HL_\bullet(\mathfrak g) \\
@VHL^\Psi_\bullet(x)VV @VVHL^\Psi_\bullet(x_{(1)})\boxtimes HL^\Psi_\bullet(x_{(2)})V\\
HL_\bullet(\mathfrak g') @>\Delta_{\mathfrak g'}>> HL_\bullet(\mathfrak g') \boxtimes HL_\bullet(\mathfrak g') \\
\end{CD} \\
\end{array}
\end{equation} We let $gl(A)$ denote the Lie algebra of matrices over $A$, which may also be treated as a Leibniz algebra. We know (see  \cite[$\S$ 10.2]{Lod} and  \cite[Th\'{e}or\`{e}me III.3]{Cuv1}) that $H_\bullet(gl(A))$ and $HL_\bullet(gl(A))$ carry  Hopf algebra structures and their respective primitive parts are related by isomorphisms
\begin{equation}\label{primis1.1}
Prim(H_\bullet(gl(A)))\cong HC_{\bullet-1}(A) \qquad Prim(HL_\bullet(gl(A)))\cong HH_{\bullet-1}(A)
\end{equation}
to cyclic and Hochschild homologies. If $C$ is cocommutative, a coalgebra measuring $\Phi:C\longrightarrow Hom_K(A,A')$ between algebras induces a coalgebra measuring  $gl(\Phi):C\longrightarrow Hom_K(gl(A),gl(A'))$ between Lie algebras, as also at the level of Leibniz algebras. The main result of Section 5 is to show that the isomorphisms in \eqref{primis1.1} are compatible with the maps induced by coalgebra measurings.

\begin{Thm}\label{hmT1.5x} (see \ref{T5.4} and \ref{T5.7v})
Let $C$ be a cocommutative $K$-coalgebra and let $\Phi:C\longrightarrow Hom_K(A,A')$ be a  measuring between  algebras. For each $x\in C$, the following diagram commutes
\begin{equation}
\begin{array}{ccc}\begin{CD}
Prim(H_\bullet(gl(A)))@>\cong>> HC_{\bullet-1}(A)\\
@VH_\bullet^{gl(\Phi)}(x)VV @VVHC_{\bullet-1}^\Phi(x) V \\
Prim(H_\bullet(gl(A')))@>\cong>> HC_{\bullet-1}(A')\\
\end{CD}&\qquad\qquad &
\begin{CD}
Prim(HL_\bullet(gl(A)))@>\cong>> HH_{\bullet-1}(A)\\
@VHL_\bullet^{gl(\Phi)}(x)VV @VVHH_{\bullet-1}^\Phi(x) V \\
Prim(HL_\bullet(gl(A')))@>\cong>> HH_{\bullet-1}(A')\\
\end{CD}
\end{array}
\end{equation} where the horizontal isomorphisms are as given by \eqref{primis1.1}.

\end{Thm}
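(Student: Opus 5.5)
We follow the chain-level construction of the Loday--Quillen--Tsygan isomorphism \cite[$\S$ 10.2]{Lod} and check that each step intertwines the maps induced by $\Phi$; the Leibniz case is the same argument with Cuvier's construction \cite{Cuv1}. Recall how the isomorphism $Prim(H_\bullet(gl(A)))\cong HC_{\bullet-1}(A)$ arises. On the Chevalley--Eilenberg complex $\Lambda^\bullet gl_r(A)$ we compose two maps. The first is the projection onto $gl_r(K)$-coinvariants, which is a quasi-isomorphism in characteristic zero. The second is the invariant-theory map
\[
\theta_n:\ \Lambda^n gl_r(A)\longrightarrow C^\lambda_{n-1}(A),
\]
defined on elementary matrices by
\[
(E_{i_1j_1}(a_1)\wedge\dots\wedge E_{i_nj_n}(a_n))\mapsto \sum_{\sigma}\mathrm{sgn}(\sigma)\,\delta\cdot (a_{\sigma(1)}\otimes\dots\otimes a_{\sigma(n)}),
\]
where the sum runs over permutations whose index pattern closes up into a single cycle. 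Primitive elements correspond exactly to the connected part, so the isomorphism is induced by $\theta_\bullet$ restricted to primitives.

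\emph{Step 1.} Show that $gl(\Phi)$ is a measuring of Lie algebras. Write $gl(\Phi)(x)(E_{ij}(a))=E_{ij}(\Phi(x)(a))$. Then
\[
gl(\Phi)(x)([\alpha,\beta])=[gl(\Phi)(x_{(1)})\alpha,\ gl(\Phi)(x_{(2)})\beta]
\]
follows from \eqref{ms1.1} together with cocommutativity, which is needed for the $\beta\alpha$ term. Recall from \cite{BK1} the chain map on Chevalley--Eilenberg complexes:
\[
\alpha_1\wedge\dots\wedge\alpha_n\mapsto gl(\Phi)(x_{(1)})\alpha_1\wedge\dots\wedge gl(\Phi)(x_{(n)})\alpha_n.
\]
Recall also the chain map on the Connes complex that induces $HC^\Phi_\bullet(x)$ (Proposition \ref{P2.2}):
\[
a_0\otimes\dots\otimes a_m\mapsto \Phi(x_{(1)})(a_0)\otimes\dots\otimes\Phi(x_{(m+1)})(a_m).
\]

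\emph{Step 2.} Check chain-level commutativity with $\theta$. Apply $\theta$ after the measuring: each term of $\theta$ is a permuted tensor $\Phi(x_{(\sigma(1))})(a_{\sigma(1)})\otimes\cdots$. By cocommutativity the iterated coproduct $x_{(1)}\otimes\dots\otimes x_{(n)}$ is invariant under permutation of its factors, so this term equals the measuring map applied to $a_{\sigma(1)}\otimes\dots\otimes a_{\sigma(n)}$. The matrix-index bookkeeping is untouched by $\Phi$, because $\Phi(x)$ acts only on the entries. Hence $\theta\circ gl(\Phi)(x)=\Phi(x)\circ\theta$ on chains. The coinvariant projection is likewise compatible, since $gl(\Phi)(x)$ commutes with conjugation by permutation matrices up to the scalar $\epsilon$ in the appropriate sense. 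Since $\theta$ kills nothing relevant on coinvariants, the diagram commutes already before passing to primitives.

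\emph{Step 3.} Show that the maps preserve primitives and pass to the colimit over $r$. By Proposition \ref{P5.1}, we have
\[
\Delta\circ H^{gl(\Phi)}(x)=\big(H(x_{(1)})\otimes H(x_{(2)})\big)\circ\Delta.
\]
Let $p$ be primitive, so $\Delta p=p\otimes1+1\otimes p$. On $H_0=K$ the map $H(x)$ acts by $\epsilon(x)$. Therefore
\[
\Delta\big(H(x)p\big)=H(x_{(1)})p\otimes \epsilon(x_{(2)})+\epsilon(x_{(1)})\otimes H(x_{(2)})p=H(x)p\otimes1+1\otimes H(x)p,
\]
so $H(x)p$ is primitive. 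Compatibility with the stabilization maps $gl_r\subset gl_{r+1}$ is immediate.

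\emph{Leibniz case.} Here we use $\Lambda\to T$ and Cuvier's map $T^n gl(A)\to C_{n-1}(A)$, and invoke Proposition \ref{P5.5} in place of Proposition \ref{P5.1}. The main obstacle is Step 2. We must make sure that the chain map realizing the isomorphism is given by an explicit formula in which each $a_i$ appears exactly once and in a permuted slot, so that cocommutativity can absorb the permutation. If the paper's isomorphism instead passes through a zig-zag of quasi-isomorphisms, such as the coinvariant theory and the $GL(K)$-action, each leg must be checked separately. In particular, the identification of $gl(K)$-coinvariants requires that $gl(\Phi)(x)$ commute with the $gl(K)$-action in the measuring sense, where $\Phi(x)$ sends $1\mapsto\epsilon(x)1$.
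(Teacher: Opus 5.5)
Your proposal is correct and follows essentially the paper's route. For the Lie/cyclic half you verify chain-level compatibility of the Loday--Quillen map with the measuring, then use Proposition~\ref{P5.1} to restrict to primitives and pass to the colimit over $r$; you fold $\theta^A_{n,r}$ and the generalized trace $tr^A_{n,r}$ into one formula on elementary matrices, whereas the paper checks them separately in Lemma~\ref{L5.2} and the remark following it. Your Leibniz half is only sketched, but the legs you flag are exactly the ones the paper checks: the coinvariant projection $CL_\bullet(gl_r(A))\to\overline{CL}_\bullet(gl_r(A))$, the identification $\Theta^A_n$ with $L_\bullet(A)$, Cuvier's subcomplex $V_\bullet(A)\subseteq L_\bullet(A)$, and the maps $\iota^A_\bullet$, $\zeta^A_\bullet$ (Proposition~\ref{P5.dc}). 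Each of these is handled by your principle that the measuring acts only on the $A$-entries and commutes with permutations of tensor factors by cocommutativity.
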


We come to the product structure on Lie algebra homology and Leibniz homology in Section 6. We know that both $H_\bullet(gl(A))$ and $HL_\bullet(gl(A))$ are equipped with product structures induced by the operation $\oplus: gl(A)\times gl(A)\longrightarrow gl(A)$ as described in \cite[$\S$ 10.2.12]{Lod}. We then show the following result.

\begin{Thm}\label{hmT1.6y} (see \ref{P6.2} and \ref{P6.2L})  Let $C$ be a cocommutative coalgebra and let $\Phi:C\longrightarrow Hom_K(A,A')$ be a measuring between  algebras. Then, the maps 
\begin{equation} 
H_\bullet^{gl(\Phi)}:C\longrightarrow Hom_K(H_\bullet(gl(A)),H_\bullet(gl(A')))\qquad x\mapsto H_\bullet^{gl(\Phi)}(x):H_\bullet(gl(A))\longrightarrow H_\bullet(gl(A'))
\end{equation} determine a  measuring of algebras from $H_\bullet(gl(A))$ to $H_\bullet(gl(A'))$.  Similarly, 
 the maps \begin{equation}\label{mslie6Lx}
HL_\bullet^{gl(\Phi)}:C\longrightarrow Hom_K(HL_\bullet(gl(A)),HL_\bullet(gl(A')))\qquad x\mapsto HL_\bullet^{gl(\Phi)}(x):HL_\bullet(gl(A))\longrightarrow HL_\bullet(gl(A'))
\end{equation} determine a measuring of algebras from $HL_\bullet(gl(A))$ to $HL_\bullet(gl(A'))$. 
\end{Thm}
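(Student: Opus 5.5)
The plan is to work at the chain level and then pass to homology. The product on $H_\bullet(gl(A))$ is induced by the Lie algebra map $\oplus: gl(A)\times gl(A)\longrightarrow gl(A)$, which sends $(\alpha,\beta)$ to the block matrix with $\alpha$ in the even-indexed slots and $\beta$ in the odd-indexed slots (or, up to conjugation, the block-diagonal sum). On Chevalley-Eilenberg chains it is the composite
\begin{equation*}
\Lambda^\bullet gl(A)\otimes \Lambda^\bullet gl(A)\xrightarrow{\ \cong\ }\Lambda^\bullet(gl(A)\oplus gl(A))\xrightarrow{\ \Lambda(\oplus)\ }\Lambda^\bullet gl(A).
\end{equation*}
Recall the chain map from \cite{BK1}. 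It is
\begin{equation*}
(\alpha_1\wedge\dots\wedge\alpha_n)\mapsto \sum gl(\Phi)(x_{(1)})(\alpha_1)\wedge\dots\wedge gl(\Phi)(x_{(n)})(\alpha_n),
\end{equation*}
where $gl(\Phi)(x)$ acts entrywise on matrices: $(a_{ij})\mapsto(\Phi(x)(a_{ij}))$. Cocommutativity of $C$ makes this well defined on exterior powers.

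The key observation is that $gl(\Phi)(x)$ commutes with $\oplus$ entrywise. The matrix $\alpha\oplus\beta$ has entries that are either entries of $\alpha$, entries of $\beta$, or $0$. Since $\Phi(x)$ is linear it kills $0$, so $gl(\Phi)(x)(\alpha\oplus\beta)=gl(\Phi)(x)(\alpha)\oplus gl(\Phi)(x)(\beta)$ with no coproduct needed. Now take $u=\alpha_1\wedge\dots\wedge\alpha_p$ and $v=\beta_1\wedge\dots\wedge\beta_q$. Their product is $(\alpha_1\oplus 0)\wedge\dots\wedge(\alpha_p\oplus0)\wedge(0\oplus\beta_1)\wedge\dots\wedge(0\oplus\beta_q)$. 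Applying the chain map for $x$ and using coassociativity to split the iterated coproduct $x_{(1)},\dots,x_{(p+q)}$ into $(x_{(1)})_{(1)},\dots,(x_{(1)})_{(p)}$ and $(x_{(2)})_{(1)},\dots,(x_{(2)})_{(q)}$, this gives $\sum \Phi_\ast(x_{(1)})(u)\cdot \Phi_\ast(x_{(2)})(v)$, where $\Phi_*$ denotes the chain map. So the multiplicativity axiom holds already on chains, hence on homology.

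For the unit: the unit $1\in H_0(gl(A))=K$ is sent to $\epsilon_C(x)$, because the degree-$0$ component of the chain map is multiplication by $\epsilon_C(x)$. If \cite{BK1} used the convention with the counit in degree zero, this is immediate; I would check it against that convention.

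One subtlety needs care. The product on $H_\bullet(gl(A))$ is only well defined and associative on homology after using that inner automorphisms act trivially. This is why one may replace the block sum by the interleaved sum. Since I verify multiplicativity on chains for a fixed, specific choice of $\oplus$ used for both $A$ and $A'$, no conjugation argument is needed. I only need that the same $\oplus$ defines the products on both sides, and entrywise application commutes with any such permutation-type embedding.

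The Leibniz case is identical with $\Lambda^\bullet$ replaced by $T^\bullet$ on $gl(A)^{\otimes n}$. Here the product is $u\otimes v\mapsto$ the concatenation of the images under $\oplus$ of the two factors (as in \cite[$\S$ 10.2.12]{Lod} and \cite{Cuv1}), and the chain map of Section 5 is $\alpha_1\otimes\dots\otimes\alpha_n\mapsto\sum gl(\Phi)(x_{(1)})(\alpha_1)\otimes\dots\otimes gl(\Phi)(x_{(n)})(\alpha_n)$. The same entrywise and coassociativity computation applies; cocommutativity is needed only for the chain map property, established earlier.

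The main obstacle I expect is confirming that the product in \cite{Lod} on Leibniz homology is genuinely given by this concatenation formula at the chain level. If it is not, and is instead defined via a shuffle-type map, the shuffle sum must be shown to commute with the chain maps; cocommutativity of $C$ makes the iterated coproduct invariant under permutations, which handles it.
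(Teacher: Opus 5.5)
Your proposal is correct and follows essentially the paper's argument: multiplicativity is checked at the chain level by splitting the iterated coproduct $x_{(1)},\dots,x_{(p+q)}$ via coassociativity across the $\oplus$-product, for both the exterior (Lie) and tensor (Leibniz) chains. The only difference is that you work directly on $CE_\bullet(gl(A))$ and $CL_\bullet(gl(A))$, making explicit that the entrywise map $gl(\Phi)(x)$ commutes with $\oplus$ and checking the unit via the convention $x(1)=\epsilon_C(x)$ in degree $0$; the paper instead first descends the maps to the $gl(K)$-coinvariant complexes $\overline{CE}_\bullet(gl(A))$ and $\overline{CL}_\bullet(gl(A))$ (Lemma \ref{L6.1}), computes there, and leaves the unit condition implicit.
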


Using Theorem \ref{hmT1.6y}, we construct  enrichments $\overline{ALG}^c_K$ and $\widehat{ALG}^c_K$ of  $K$-algebras over the symmetric monoidal category 
$coCoalg_K$ of cocommutative $K$-coalgebras. For  $K$-algebras $A$, $A'$, the respective Hom objects are taken to be 
\begin{equation} \overline{ALG}^c_K(A,A'):=\mathcal M_c(H_\bullet(gl(A)),H_\bullet(gl(A')))\qquad  \widehat{ALG}^c_K(A,A'):=\mathcal M_c(HL_\bullet(gl(A)),HL_\bullet(gl(A')))
\end{equation} where $\mathcal M_c(\_\_,\_\_)$ denotes the universal cocommutative
measuring coalgebra. Accordingly, we have $coCoalg_K$-enriched functors $ALG_K^c\longrightarrow  \overline{ALG}^c_K $ and $ALG_K^c\longrightarrow    \widehat{ALG}^c_K$ , where $ALG_K^c $ is the enrichment of $K$-algebras obtained by taking $ALG_K^c(A,A'):=\mathcal M_c(A,A')$ (see Theorem \ref{T3.6cx} and Theorem \ref{T3.6cxL}).

\smallskip
In the final section of the paper, we work with involutive algebras. We recall  (see \cite[$\S$ 10.5]{Lod})  that a $K$-algebra $R$ is said to be involutive if it is equipped with a $K$-linear endomorphism $r\mapsto \hat{r}$ which satisfies
  $\hat{\hat{r}}=r$ and $
\hat{r}\hat{s}=\widehat{sr}
$ for $r$, $s\in R$. For an involutive $K$-algebra $R$, the usual counterpart of cyclic homology is given by the dihedral homology groups
$H\mathcal D_\bullet(R)$  (see \cite{Lod87}, \cite{LP88}, \cite[$\S$ 10.5]{Lod}). Accordingly, we define a coalgebra measuring 
$\Phi: C\longrightarrow Hom_K(R,R')$ between involutive algebras $R$, $R'$ and show that there are induced maps $H\mathcal D^\Phi_\bullet(x):H\mathcal D_\bullet(R)\longrightarrow H\mathcal D_\bullet(R')$ between dihedral homology groups for each $x\in C$. 

\smallskip
For an involutive $K$-algebra $R$,  let $sk(R)$ the Lie algebra of skew symmetric matrices over $R$ and let $sp(R)$ be the Lie algebra of symplectic matrices over $R$ (see Section 7 for definitions). Accordingly, we show in Proposition \ref{P7.4} that a cocommutative measuring $\Phi:C\longrightarrow Hom_K(R,R')$  between involutive algebras  induces maps
\begin{equation} 
H_\bullet^{sk(\Phi)}(x):H_\bullet(sk(R))\longrightarrow H_\bullet(sk(R'))\qquad H_\bullet^{sp(\Phi)}(x):H_\bullet(sp(R))\longrightarrow H_\bullet(sp(R'))
\end{equation}
between Lie algebra homologies for each $x\in C$. We now recall (see \cite[$\S$ 10.5]{Lod}) that  $H_\bullet(sk(R))$ and
$H_\bullet(sp(R))$ are actually Hopf algebras and there are isomorphisms
\begin{equation}\label{7.12qi}
Prim(H_\bullet(sk(R)))\overset{\cong}{\longrightarrow} H\mathcal D_{\bullet-1}(R) \qquad Prim(H_\bullet(sp(R)))\overset{\cong}{\longrightarrow} H\mathcal D_{\bullet-1}(R)
\end{equation} relating their primitive parts to the dihedral homology. 
Our final result is to show that the isomorphisms in \eqref{7.12qi} are compatible with maps induced by coalgebra measurings between involutive algebras. 

\begin{Thm}\label{hmT1.8u} (see \ref{T57.4}) 
Let $R$, $R'$ be involutive $K$-algebras. Let $\Phi:C\longrightarrow Hom_K(R,R')$ be a cocommutative measuring of involutive algebras from $R$ to $R'$.  For each $x\in C$, the following diagrams commute
\begin{equation}
\begin{array}{lll}
\begin{CD}
Prim(H_\bullet(sk(R)))@>\cong>> H\mathcal D_{\bullet-1}(R)\\
@VH_\bullet^{sk(\Phi)}(x)VV @VVH\mathcal D_{\bullet-1}^\Phi(x) V \\
Prim(H_\bullet(sk(R')))@>\cong>> H\mathcal D_{\bullet-1}(R')\\
\end{CD} & \qquad &
\begin{CD}
Prim(H_\bullet(sp(R)))@>\cong>> H\mathcal D_{\bullet-1}(R)\\
@VH_\bullet^{sp(\Phi)}(x)VV @VVH\mathcal D_{\bullet-1}^\Phi(x) V \\
Prim(H_\bullet(sp(R')))@>\cong>> H\mathcal D_{\bullet-1}(R')\\
\end{CD} \\
\end{array}
\end{equation} where the isomorphisms in the horizontal arrows are as given by \eqref{7.12qi}.
\end{Thm}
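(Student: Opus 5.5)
The plan is to follow the construction of the isomorphisms in \eqref{7.12qi} as in \cite[$\S$ 10.5]{Lod}, which parallels the Loday--Quillen--Tsygan argument for $gl$. At each stage I will check that the maps induced by the measuring commute with the structure maps. I treat $sk$; the case $sp$ is identical, with the involution on $gl_{2n}$ twisted by the symplectic form.

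The isomorphism $Prim(H_\bullet(sk(R)))\cong H\mathcal D_{\bullet-1}(R)$ is the composite of four pieces:
\begin{enumerate}
\item[(i)] the inclusion of primitives and the identification $H_\bullet(sk(R))\cong \varinjlim_n H_\bullet(sk_n(R))$;
\item[(ii)] the isomorphism of the latter with the coinvariants $(\Lambda^\bullet sk_n(R))_{\mathfrak o_n(K)}$ under the orthogonal Lie algebra, which uses that $sk_n(K)$ acts trivially on homology;
\item[(iii)] invariant theory for $O_n(K)$, identifying the coinvariants of $sk_n(R)^{\otimes m}$ in the stable range with $K[\mathfrak S_m\ltimes(\mathbb Z/2)^m]$-type data tensored with $R^{\otimes m}$;
\item[(iv)] the resulting map $\theta: \Lambda^m sk(R)\to C^{\mathcal D}_{m-1}(R)$ to the dihedral complex, sending a wedge of elementary skew matrices $a_1e_{\sigma}\wedge\dots$ to $\pm$ the class of a product chain $a_{i_1}\otimes \hat a_{i_2}\otimes\cdots$ along a cycle.
\end{enumerate}

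First I make $sk(\Phi)$ explicit. I define $sk(\Phi)(x)$ entrywise, $(r_{ij})\mapsto (\Phi(x)(r_{ij}))$. This preserves skew matrices because a measuring of involutive algebras satisfies $\Phi(x)(\hat r)=\widehat{\Phi(x)(r)}$. It is a Lie measuring by cocommutativity, as in Proposition \ref{P7.4}.

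On chains, $H^{sk(\Phi)}_\bullet(x)$ is induced by
\[
\alpha_1\wedge\dots\wedge\alpha_m\mapsto \sum sk(\Phi)(x_{(1)})(\alpha_1)\wedge\dots\wedge sk(\Phi)(x_{(m)})(\alpha_m).
\]
On the dihedral side, $H\mathcal D^\Phi_\bullet(x)$ is induced by
\[
a_0\otimes\dots\otimes a_{m-1}\mapsto\sum \Phi(x_{(1)})(a_0)\otimes\dots\otimes\Phi(x_{(m)})(a_{m-1}),
\]
which is compatible with the dihedral action (cyclic operator and involution) by cocommutativity and compatibility with $\hat{}$.

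The main step is to show that $\theta$ intertwines these chain maps, i.e. $\theta\circ\Lambda(sk(\Phi)(x))=\mathcal D^\Phi(x)\circ\theta$ on chains. Here $\theta$ is $K$-multilinear in the matrix entries and involves only permutations of tensor factors, scalar signs, and possibly the involution on entries. The map $\Phi(x)$ applied factorwise through the iterated coproduct commutes with permutations of factors, by cocommutativity, and with $\hat{}$, by hypothesis. Hence both composites agree on generators of the form (scalar matrices)$\otimes$(entries).

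The maps in (i)--(ii) are natural with respect to entrywise maps commuting with the $O_n(K)$-action. They also commute with the inclusions $sk_n\subset sk_{n+1}$. Since $sk(\Phi)(x)$ is $K$-linear in entries and fixes the scalar matrix parts, it commutes with the $\mathfrak o_n(K)$-action on each factor, and it respects stabilization.

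It remains to see that $H^{sk(\Phi)}_\bullet(x)$ maps primitives to primitives. This is not automatic for a single $x$, so I instead argue that the commutative square extends to all of $H_\bullet(sk(R))$. I use the analogue of the coproduct compatibility of Proposition \ref{P5.1} and the fact that the Loday--Quillen style map $H_\bullet(sk(R))\to \Lambda^\bullet(H\mathcal D_{\bullet-1}(R))$ is an isomorphism of Hopf algebras. Compatibility of the chain maps $\theta$ then gives commutativity on primitives, which is the required diagram, exactly as in the proof of Theorem \ref{hmT1.5x}.

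I expect the main obstacle to be the bookkeeping in the chain-level identification. The sign and involution conventions in $\theta$ for the dihedral case must be tracked so that $\hat{}$ applied to $\Phi(x_{(i)})(a)$ matches $\Phi(x_{(i)})(\hat a)$ in the correct slot. The rest follows from the naturality arguments already used for $gl$ in Section 5.
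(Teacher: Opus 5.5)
Your proposal is correct and is essentially the paper's argument. The paper checks that the chain-level composite $\wedge^{n+1}sk_r(R)\hookrightarrow\wedge^{n+1}gl_r(R)\xrightarrow{\theta}\tilde C_n(M_r(R))\xrightarrow{tr}\tilde C_n(R)\twoheadrightarrow\mathcal D_n(R)$ (your $\theta$) intertwines the measuring-induced maps, passes to the colimit in $r$, and restricts the resulting square on $H_\bullet(sk(R))$ to primitives. Your steps (ii)--(iii) and the Hopf-algebra identification with $\Lambda^\bullet(H\mathcal D_{\bullet-1}(R))$ are therefore not needed.

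One correction: preservation of primitives is automatic for each single $x$. The coproduct compatibility of Proposition \ref{P5.1}, together with $H^{sk(\Phi)}_0(x)=\epsilon_C(x)$ on $H_0=K$, gives $\Delta(H^{sk(\Phi)}_\bullet(x)\alpha)=1\otimes H^{sk(\Phi)}_\bullet(x)\alpha+H^{sk(\Phi)}_\bullet(x)\alpha\otimes 1$ for primitive $\alpha$, which is exactly Proposition \ref{P7.4}.
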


\section{Measurings and morphisms on cyclic homology}

Let $K$ be a field of characteristic $0$. Let  $A$ be a $K$-algebra. For each $p\geq 0$, we set $C_p(A):=A^{\otimes p+1}$. By abuse of notation, we will often denote an element 
$a_0\otimes a_1\otimes ...\otimes a_p\in C_p(A)$ simply as $(a_0,a_1,...,a_p)$. We know (see, for instance, \cite[$\S$ 1]{Lod}) that the simplicial module $C_\bullet(A)$ associated to $A$  consists of the terms $C_p(A)=A^{\otimes p+1}$, ${p\geq 0}$ along with face maps 
\begin{equation}
\begin{array}{ll}
d_i^p:C_p(A)\longrightarrow C_{p-1}(A) \qquad & d_i^p(a_0,a_1,...,a_p):=
\left\{
\begin{array}{ll}
(a_0,...,a_ia_{i+1},...,a_p) & \mbox{$0\leq i\leq p-1$} \\
(a_pa_0,a_1,...,a_{p-1}) & \mbox{$i=p$}\\
\end{array}\right.\\
\end{array} 
\end{equation} and degeneracy maps
\begin{equation}
s_j^p:C_p(A)\longrightarrow C_{p+1}(A)\qquad s_j^p(a_0,...,a_p):=(a_0,..,a_j,1,a_{j+1},...,a_p)
\end{equation} for $0\leq j\leq p$. The Hochschild homology groups $HH_\bullet(A)$ of $A$ are given by the homology of the standard chain complex $C^{hoc}_\bullet(A)$ associated to the simplicial module $C_\bullet(A)$ (see \cite[$\S$ 1.6.1]{Lod}). Together with the cyclic operator
\begin{equation}
t_p:C_p(A)\longrightarrow C_p(A)\qquad t_p(a_0,...,a_p)=(-1)^{p}(a_p,a_0,...,a_{p-1})
\end{equation} one has the cyclic module associated to $A$ (see \cite[$\S$ 2.5.4]{Lod}), which we continue to denote by $C_\bullet(A)$. Whenever it is clear from context, we will denote the face maps, degeneracy maps and cyclic operator  simply by $d_i:C_p(A)\longrightarrow C_{p-1}(A) $, $s_j:C_p(A)\longrightarrow C_{p+1}(A)$, and $t:C_p(A)\longrightarrow C_p(A)$ respectively.  We know that the differential on the Hochschild complex 
$(C_\bullet^{hoc}(A),b)$ is given by $b:=\sum_{i=0}^p(-1)^p d_i:C_p(A)\longrightarrow C_{p-1}(A)$. The cyclic homology
groups $HC_\bullet(A)$ of $A$ are given by the homology of the total complex $Tot(C_{\bullet\bullet}^{cy}(A))$, where $C_{\bullet\bullet}^{cy}(A)$ is  the bicomplex  associated to the cyclic module $C_\bullet(A)$ (see \cite[$\S$ 2.5.5]{Lod}).

\smallskip 
Now let $A$, $A'$ be  $K$-algebras. Let $C$ be a cocommutative $K$-coalgebra having  coproduct 
$\Delta_C:C\longrightarrow C\otimes C$ and counit $\epsilon_C:C\longrightarrow K$. Let  
$\Phi:C\longrightarrow Hom_K(A,A')$ be a measuring from $A$ to $A'$ as described in \eqref{ms1.1}. For $x\in C$, we will typically write the coproduct  on $C$ using Sweedler notation as $\Delta_C(x)=x_{(1)}\otimes 
x_{(2)}$ by suppressing summation signs. For $x\in C$ and $a\in A$, we will often write the element $\Phi(x)(a)\in A'$  simply as 
$x(a)\in A'$. For each $x\in C$, we now set
\begin{equation}\label{2.4mao}
C_n^\Phi(x): C_n(A)\longrightarrow C_n(A')\qquad (a_0,...,a_n)\mapsto (x_{(1)}(a_0),...,x_{(n+1)}(a_n))
\end{equation}
We will often write $C_n^\Phi(x)(a_0,...,a_n)$ simply as $x(a_0,...,a_n)$. We start with the following result.

\begin{thm}\label{P1.1}
Let $C$ be a cocommutative $K$-coalgebra and let $\Phi:C\longrightarrow Hom_K(A,A')$ be a  measuring between algebras. For each $x\in C$, the maps
$\{C_n^\Phi(x): C_n(A)\longrightarrow C_n(A')\}_{n\geq 0}$ together determine a morphism $C_\bullet^\Phi(x):C_\bullet(A)\longrightarrow C_\bullet(A')$ of cyclic modules. In particular, we have induced maps
\begin{equation}
HH^\Phi_\bullet(x): HH_\bullet(A)\longrightarrow HH_\bullet(A')\qquad HC^\Phi_\bullet(x): HC_\bullet(A)\longrightarrow HC_\bullet(A')
\end{equation}
on Hochschild homology and cyclic homology groups respectively. 
\end{thm}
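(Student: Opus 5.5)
We must check that for each $x\in C$ the maps $C_n^\Phi(x)$ commute with all face maps $d_i$, all degeneracy maps $s_j$, and the cyclic operator $t$. Once this is done, $C^\Phi_\bullet(x)$ is a morphism of cyclic modules. It then induces a chain map on the Hochschild complex $(C^{hoc}_\bullet(A),b)$, since $b$ is an alternating sum of faces. It also induces a morphism of the bicomplexes $C^{cy}_{\bullet\bullet}(A)\longrightarrow C^{cy}_{\bullet\bullet}(A')$, whose differentials are built from $b$, $b'=\sum_{i=0}^{p-1}(-1)^id_i$, $1-t$ and the norm $N=\sum t^k$. Taking homology of the complex and of the total complex then yields $HH^\Phi_\bullet(x)$ and $HC^\Phi_\bullet(x)$. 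Throughout, I use Sweedler notation for the iterated coproduct $\Delta^{(n)}(x)=x_{(1)}\otimes\cdots\otimes x_{(n+1)}$, which is well defined by coassociativity.

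\emph{Degeneracies.} For $s_j$ we insert $1$ in position $j+1$. Then
\[
x(a_0,\dots,a_j,1,a_{j+1},\dots,a_n)=(x_{(1)}(a_0),\dots,x_{(j+1)}(a_j),\,x_{(j+2)}(1),\,x_{(j+3)}(a_{j+1}),\dots).
\]
Using $x_{(j+2)}(1)=\epsilon_C(x_{(j+2)})1_{A'}$ and the counit axiom, the iterated coproduct collapses back to $\Delta^{(n)}(x)$. This gives $s_j\circ C^\Phi_n(x)$, and cocommutativity is not needed here.

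\emph{Faces.} For $0\le i\le n-1$, apply the measuring identity $x_{(i+1)}(a_ia_{i+1})=x_{(i+1)(1)}(a_i)\,x_{(i+1)(2)}(a_{i+1})$. Coassociativity then identifies the result with $d_i\bigl(x_{(1)}(a_0),\dots,x_{(n+2)}(a_{n+1})\bigr)$, again with no cocommutativity. The last face $d_n$ multiplies $a_n$ with $a_0$ and moves the product to the front. Expanding, $C^\Phi_{n-1}(x)(a_na_0,a_1,\dots)$ has first entry $x_{(1)(1)}(a_n)x_{(1)(2)}(a_0)$ and the others $x_{(k)}(a_{k-1})$. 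Meanwhile $d_n(C^\Phi_n(x)(a_0,\dots,a_n))$ has first entry $x_{(n+1)}(a_n)x_{(1)}(a_0)$. These agree only after cyclically permuting the tensor factors of $\Delta^{(n)}(x)$. This is where cocommutativity enters: for cocommutative $C$ the iterated coproduct $\Delta^{(n)}(x)$ is invariant under any permutation of its $n+1$ tensor factors. I expect this to be the only real point to justify.

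\emph{Cyclic operator.} The same cyclic-invariance of $\Delta^{(n)}(x)$ gives $t\circ C_n^\Phi(x)=C_n^\Phi(x)\circ t$, since $t$ just cyclically rotates entries with the sign $(-1)^n$, which is unaffected.

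\emph{Main obstacle.} The step I would write out carefully is the lemma that cocommutativity implies $\tau\cdot\Delta^{(n)}(x)=\Delta^{(n)}(x)$ for every permutation $\tau\in S_{n+1}$. I would prove it by induction on $n$. Adjacent transpositions at positions $(k,k+1)$ are handled by writing $\Delta^{(n)}=(\mathrm{id}^{\otimes k}\otimes\Delta\otimes\mathrm{id}^{\otimes n-k-1})\circ\Delta^{(n-1)}$ via coassociativity and applying $\Delta=\tau_{12}\circ\Delta$. Since adjacent transpositions generate $S_{n+1}$, the lemma follows. With this lemma, all the simplicial and cyclic identities reduce to the routine checks above, and the induced maps on homology follow formally.
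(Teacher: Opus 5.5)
Your proposal is correct and follows essentially the same route as the paper. The paper checks the degeneracies via $x(1)=\epsilon_C(x)1_{A'}$ and the cyclic operator via cocommutativity, and cites \cite[Proposition 2.2]{BK1} for the face maps. You instead verify the faces directly (correctly noting that only the last face $d_n$ needs cocommutativity) and spell out the permutation-invariance of $\Delta^{(n)}(x)$ that the paper uses implicitly. There is one harmless indexing slip: for $(a_0,\dots,a_n)\in C_n(A)$, the inner faces should give $d_i\bigl(x_{(1)}(a_0),\dots,x_{(n+1)}(a_n)\bigr)$, not a tuple running up to $x_{(n+2)}(a_{n+1})$.
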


\begin{proof} Let $x\in C$. We know from \cite[Proposition 2.2]{BK1} that the maps 
$C_n^\Phi(x): C_n(A)\longrightarrow C_n(A')$ are well behaved with respect to the face maps on the cyclic modules $C_\bullet(A)$ and $C_\bullet(A')$. For $p\geq 0$, $0\leq j\leq p$ and $(a_0,...,a_p)\in C_p(A)$, we now verify  that
\begin{equation*}
\begin{array}{ll}
C_n^\Phi(x)s_j(a_0,...,a_p)& = (x_{(1)}(a_0),...,x_{(j+1)}(a_j),x_{(j+2)}(1),x_{(j+3)}(a_{j+1}),...,x_{(p+2)}(a_p))\\
&=(x_{(1)}(a_0),...,x_{(j+1)}(a_j),\epsilon_C(x_{(j+2)})\cdot 1 ,x_{(j+3)}(a_{j+1}),...,x_{(p+2)}(a_p))\\
&=(x_{(1)}(a_0),...,x_{(j+1)}(a_j),1 ,x_{(j+2)}(a_{j+1}),...,x_{(p+1)}(a_p))\\
&=s_jC_n^\Phi(x)(a_0,...,a_p)\\
\end{array}
\end{equation*} Since $C$ is cocommutative, we also see that 
\begin{equation*}
C_n^\Phi(x)t(a_0,...,a_p)=(-1)^p(x_{(1)}(a_p),x_{(2)}(a_0),...,x_{(p+1)}(a_{p-1}))=(-1)^p(x_{(p+1)}(a_p),x_{(1)}(a_0),...,x_{(p)}(a_{p-1}))=tC_n^\Phi(x)(a_0,...,a_p)\\
\end{equation*} It follows that the maps $C_\bullet^\Phi(x)$ are also well behaved with respect to the degeneracy and cyclic operators on the cyclic modules
$C_\bullet(A)$ and $C_\bullet(A')$. The result is now clear. 

\end{proof}

For any algebra $A$, we know that its Hochschild and cyclic homology groups fit into the following long exact sequence (see, for instance, \cite[$\S$ 2.2.1]{Lod})
\begin{equation} \label{pr2.6}
\dots \longrightarrow HH_n(A)\xrightarrow{\quad I\quad } HC_n(A) \xrightarrow{\quad S\quad } HC_{n-2}(A) \xrightarrow{\quad B\quad } HH_{n-1}(A) \longrightarrow \dots
\end{equation}
where $S:HC_\bullet(A)\longrightarrow HC_{\bullet-2}(A)$ is the Connes periodicity operator.  Our next result is that the morphisms in Proposition \ref{P1.1} are well behaved with respect to Connes periodicity. 

\begin{thm}\label{P2.2}
Let $C$ be a cocommutative $K$-coalgebra and let $\Phi:C\longrightarrow Hom_K(A,A')$ be a  measuring between algebras. For each $x\in C$, the maps $HH^\Phi_\bullet(x)$ and 
$HC^\Phi_\bullet(x)$ fit into the following commutative diagram
\begin{equation}
\begin{CD}
\dots @>>> HH_n(A) @>I>> HC_n(A) @>S>> HC_{n-2}(A) @>B>> HH_{n-1}(A) @>>> \dots \\
@. @VHH^\Phi_n(x)VV @VHC^\Phi_n(x)VV @VHC^\Phi_{n-2}(x)VV @VHH^\Phi_{n-1}(x)VV @. \\
\dots @>>> HH_n(A') @>I>> HC_n(A') @>S>> HC_{n-2}(A') @>B>> HH_{n-1}(A') @>>> \dots \\
\end{CD}
\end{equation} where the top and bottom rows are the Connes periodicity sequences for the algebras $A$ and $A'$ respectively.
\end{thm}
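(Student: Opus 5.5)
By Proposition \ref{P1.1}, for each $x\in C$ the maps $C^\Phi_\bullet(x)$ form a morphism of cyclic modules $C_\bullet(A)\longrightarrow C_\bullet(A')$. The plan is to use the fact that the Connes periodicity sequence is natural with respect to morphisms of cyclic modules. This is a formal consequence of how the sequence is built from the cyclic bicomplex, so we verify it at chain level.

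Recall from \cite[$\S$ 2.2]{Lod} that the sequence \eqref{pr2.6} arises from a short exact sequence of total complexes
\begin{equation*}
0\longrightarrow C^{hoc}_\bullet(A)\xrightarrow{\ I\ } Tot(C^{cy}_{\bullet\bullet}(A))\xrightarrow{\ S\ } Tot(C^{cy}_{\bullet\bullet}(A))[-2]\longrightarrow 0
\end{equation*}
\emph{Step 1: a map of bicomplexes.} Let $C^{cy}_{\bullet\bullet}(x)$ act on every column of $C^{cy}_{\bullet\bullet}(A)$ by $C^\Phi_n(x)$. The horizontal differentials of the bicomplex are $1-t$ and $N=\sum t^i$. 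The vertical differentials are $b$ and $-b'$, where $b'=\sum_{i=0}^{p-1}(-1)^id_i$. All of these are linear combinations of composites of face maps $d_i$ and cyclic operators $t$. Proposition \ref{P1.1} shows that $C^\Phi_\bullet(x)$ commutes with the $d_i$ and with $t$, so $C^{cy}_{\bullet\bullet}(x)$ is a morphism of bicomplexes. Passing to total complexes gives a chain map $Tot(C^{cy}_{\bullet\bullet}(x))$.

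\emph{Step 2: compatibility with the short exact sequence.} The map $I$ includes $C^{hoc}_\bullet(A)$ as the first column. The map $S$ is the projection that deletes the first two columns and shifts the rest. Since $C^{cy}_{\bullet\bullet}(x)$ acts by the same $C^\Phi_n(x)$ in every column, it commutes with both $I$ and $S$. We therefore get a morphism of short exact sequences of complexes, with $C^\Phi_\bullet(x)$ on the Hochschild complex and $Tot(C^{cy}_{\bullet\bullet}(x))$ on the other two terms.

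\emph{Step 3: passing to homology.} A morphism of short exact sequences of complexes induces a morphism of the associated long exact homology sequences. In particular the induced maps commute with the connecting homomorphism $B:HC_{n-2}\longrightarrow HH_{n-1}$. This gives the required commutative diagram, with $HH^\Phi_\bullet(x)$ and $HC^\Phi_\bullet(x)$ as the vertical maps.

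The only point needing care is the identification in Step 2. One must make sure the $S$ and $B$ appearing in \eqref{pr2.6} are exactly the maps coming from this column-truncation sequence. If one prefers to use the $(b,B)$-bicomplex or Connes' complex $C^\lambda$, one instead checks directly that $C^\Phi_\bullet(x)$ commutes with $B=(1-t)sN$, where $s$ is the extra degeneracy. This holds because, by the counit condition in \eqref{ms1.1}, $C^\Phi(x)$ sends $1$ to $\epsilon_C(\cdot)1$, exactly as in the degeneracy computation in Proposition \ref{P1.1}. Either way the commutativity is formal. I expect no real obstacle beyond keeping track of which construction of the periodicity sequence is being used.
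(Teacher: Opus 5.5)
Your argument is essentially the paper's: both deduce the result from the functoriality, in morphisms of cyclic modules, of the short exact sequence of cyclic bicomplexes that defines the periodicity sequence, combined with Proposition \ref{P1.1}. One small correction: the subcomplex in that sequence (the kernel of $S$) is the first two columns $C^{cy}_{\bullet\bullet}(A)\{2\}$, not the single Hochschild column, so the sequence you wrote is not exact in the middle; replace $C^{hoc}_\bullet(A)$ by $C^{cy}_{\bullet\bullet}(A)\{2\}$ and note that the inclusion of the $b$-column into it is a natural quasi-isomorphism, and then your Steps 1--3 go through verbatim.
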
 

\begin{proof}
 By definition (see \cite[$\S$ 2.2.1]{Lod}), the periodicity sequence in \eqref{pr2.6} is the long exact sequence of homology groups associated to the short exact sequence of bicomplexes
\begin{equation}\label{2.8ex}
0\longrightarrow C^{cy}_{\bullet\bullet}(A)\{2\} \longrightarrow C^{cy}_{\bullet\bullet}(A)\longrightarrow C^{cy}_{\bullet\bullet}(A)[2,0]\longrightarrow 0
\end{equation} Here $ C^{cy}_{\bullet\bullet}(A)\{2\} $ denotes the first two columns of the bicomplex $ C^{cy}_{\bullet\bullet}(A) $, which is quasi-isomorphic to the Hochschild complex. Further, $C^{cy}_{\bullet\bullet}(A)[2,0]$ denotes the shifted bicomplex obtained by setting $(C^{cy}_{\bullet\bullet}(A)[2,0])_{pq}:=C^{cy}_{p-2,q}(A)$ and
$C^{cy}_{\bullet\bullet}(A)\longrightarrow C^{cy}_{\bullet\bullet}(A)[2,0]$ is the canonical projection. From Proposition \ref{P1.1}, we know that the measuring $\Phi:C\longrightarrow Hom_K(A,A')$  induces a morphism  $C_\bullet^\Phi(x):C_\bullet(A)\longrightarrow C_\bullet(A')$ of cyclic modules. Since the sequence in \eqref{2.8ex} is functorial in terms of cyclic modules, the result follows.  
\end{proof}

We know that the symmetric group $S_n$ has a left action on 
$C_\bullet(A)$ given by setting, for $\sigma \in S_n$ and $(a_0,...,a_n)\in C_n(A)$,
\begin{equation}\label{sym2e}
\sigma\cdot (a_0,a_1,...,a_n):=(a_0,a_{\sigma^{-1}(1)},...,a_{\sigma^{-1}(n)})
\end{equation} In other words, $C_n(A)$ becomes a left module over the group ring $K[S_n]$. For each $n$, we now set $\varepsilon_n:=\underset{\sigma\in S_n}{ \sum}sgn(\sigma)\sigma\in K[S_n]$.

\smallskip
Our next aim is to show that the maps $\{HH^\Phi_\bullet(x)\}_{x\in C}$ induced by the measuring are well behaved with respect to the anti-symmetrization maps connecting the Chevalley-Eilenberg complex of $A$ (treated as a Lie algebra by setting $[a,b]:=ab-ba$ for $a$, $b\in A$) to the Hochschild complex of $A$ (see \cite[$\S$ 1.3.4]{Lod}). For the $K$-algebra $A$, we have the Chevalley-Eilenberg complex $(E_\bullet(A),\delta)$ given by 
setting $E_n(A):=A\otimes \wedge^nA$, $n\geq 0$ along with differentials
\begin{equation}\label{2.10qz}
\begin{array}{c}
\delta: E_n(A)=A\otimes \wedge^nA \xrightarrow{\qquad\qquad\qquad} A\otimes \wedge^{n-1}A =E_{n-1}(A)\\
\begin{array}{ll}
\delta(a_0\otimes a_1\wedge ...\wedge a_n)&=\underset{i=1}{\overset{n}{\mathlarger\sum}} (-1)^i[a_0,a_i]\otimes a_1\wedge ... \wedge \hat{a}_i \wedge ... \wedge a_n\\ 
& \textrm{ }\quad+ \underset{1\leq i\leq j\leq n}{{\mathlarger\sum}} (-1)^{i+j-1} a_0\otimes [a_i,a_j]\wedge a_1\wedge ... \wedge \hat{a}_i\wedge ... \wedge \hat{a}_j\wedge ...\wedge a_n\\
\end{array} \\
\end{array}
\end{equation} where $\hat{a}_i$ in \eqref{2.10qz} refers to $a_i$ removed from the sequence. We will denote by $HE_\bullet(A)$ the homology of the complex $(E_\bullet(A),\delta)$. We recall (see \cite[$\S$ 1.3.5]{Lod}) that the anti-symmetrization map 
\begin{equation}
\varepsilon_n: E_n(A)=A\otimes \wedge^nA\longrightarrow A\otimes A^{\otimes n} \qquad a_0\otimes a_1\wedge ...\wedge a_n \mapsto 
\varepsilon_n\cdot (a_0\otimes a_1\otimes ... \otimes a_n)
\end{equation} induces a morphism of chain complexes from $(E_\bullet(A),\delta)$ to $(C^{hoc}_\bullet(A),b)$. This leads to a morphism on the respective homology groups, which
we continue to denote by $\varepsilon_\bullet:HE_\bullet(A)\longrightarrow HH_\bullet(A)$. 

\smallskip
Since $C$ is cocommutative, we  note that the measuring $\Phi:C\longrightarrow Hom_K(A,A')$ induces maps
\begin{equation}\label{lie2}
E_n^\Phi(x):E_n(A)\longrightarrow E_n(A') \qquad  a_0\otimes a_1\wedge ...\wedge a_n \mapsto x(a_0\otimes a_1\wedge ...\wedge a_n):=x_{(1)}(a_0)\otimes x_{(2)}(a_1)\wedge ...\wedge x_{(n+1)}(a_n)
\end{equation} for each $x\in C$. 

\begin{thm}\label{P2.4aj} Let $C$ be a cocommutative coalgebra and let $\Phi:C\longrightarrow Hom_K(A,A')$  be a measuring between algebras. For each $x\in C$ and $n\geq 1$, the following diagrams are commutative

\begin{equation}\label{213cd}
\begin{array}{lll}
\begin{CD}
E_n(A) @>\delta>> E_{n-1}(A)\\
@VE_n^\Phi(x) VV @VVE_{n-1}^\Phi(x) V \\
E_n(A') @>\delta>> E_{n-1}(A')\\
\end{CD}
& \qquad\qquad &
\begin{CD}
E_n(A) @>\varepsilon_n>> C_n(A)\\
@VE_n^\Phi(x) VV @VVC_n^\Phi(x) V \\
E_n(A') @>\varepsilon_n>> C_n(A')\\
\end{CD}
\end{array}
\end{equation} In particular, the measuring $\Phi:C\longrightarrow Hom_K(A,A')$ induces maps $HE^\Phi_n:HE_n(A)\longrightarrow HE_{n}(A')$, $n\geq 0$ which fit into  commutative diagrams
\begin{equation}
\begin{CD}
HE_n(A) @>\varepsilon_n>> HH_n(A)\\
@VHE_n^\Phi(x)VV @VVHH_n^\Phi(x)V \\
HE_n(A')@>\varepsilon_n>> HH_n(A')\\
\end{CD}
\end{equation} for each $x\in C$.
\end{thm}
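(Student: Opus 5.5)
First I would make sure $E_n^\Phi(x)$ is well defined on $A\otimes\wedge^nA$. Let $T_n(x)$ be the map $a_0\otimes a_1\otimes\dots\otimes a_n\mapsto x_{(1)}(a_0)\otimes\dots\otimes x_{(n+1)}(a_n)$ on $A\otimes A^{\otimes n}$. Because $C$ is cocommutative, the iterated coproduct $x_{(1)}\otimes\dots\otimes x_{(n+1)}$ does not change when its tensor factors are permuted. Hence $T_n(x)$ commutes with the action \eqref{sym2e} of every $\sigma\in S_n$, which fixes $a_0$ and permutes $a_1,\dots,a_n$. Since $\wedge^nA$ is the quotient of $A^{\otimes n}$ by the relations $\sigma\cdot w = sgn(\sigma)w$, $T_n(x)$ descends to $E_n^\Phi(x)$. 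This also gives the right-hand square of \eqref{213cd} immediately: $\varepsilon_n$ is induced by the element $\sum sgn(\sigma)\sigma\in K[S_n]$, and $C_n^\Phi(x)=T_n(x)$ commutes with every $\sigma$, so $C_n^\Phi(x)\varepsilon_n=\varepsilon_n E_n^\Phi(x)$.

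For the left-hand square I would first prove one lemma: for $a,b\in A$,
\[ x([a,b])=[x_{(1)}(a),x_{(2)}(b)]. \]
This holds because $x(ab)=x_{(1)}(a)x_{(2)}(b)$, while cocommutativity gives $x(ba)=x_{(1)}(b)x_{(2)}(a)=x_{(2)}(b)x_{(1)}(a)$; subtracting gives the bracket. I would then apply $E_{n-1}^\Phi(x)$ to $\delta(a_0\otimes a_1\wedge\dots\wedge a_n)$ term by term.

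A term $[a_0,a_i]\otimes a_1\wedge\dots\widehat{a_i}\dots\wedge a_n$ is sent to
\[ x_{(1)}([a_0,a_i])\otimes x_{(2)}(a_1)\wedge\dots, \]
and the lemma rewrites this as $[x_{(1)}(a_0),x_{(2)}(a_i)]\otimes\dots$. Cocommutativity and coassociativity then let me relabel the Sweedler indices so that $a_j$ is paired with $x_{(j+1)}$. The result is exactly the corresponding term of $\delta(x_{(1)}(a_0)\otimes x_{(2)}(a_1)\wedge\dots\wedge x_{(n+1)}(a_n))$. The terms $a_0\otimes[a_i,a_j]\wedge\dots$ are handled the same way. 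The signs are the same on both sides, since $E^\Phi_\bullet(x)$ does not reorder any entries.

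The only real care needed, and the main obstacle, is the relabeling of the iterated Sweedler components. I would state it once: for a cocommutative $C$, the element $\Delta^{(n)}(x)\in C^{\otimes n+1}$ is $S_{n+1}$-invariant, and coassociativity says $(\mathrm{id}\otimes\dots\otimes\Delta\otimes\dots\otimes\mathrm{id})\Delta^{(n-1)}(x)=\Delta^{(n)}(x)$ in any slot. Merging the two components used for $a_0,a_i$ (or $a_i,a_j$) into the single component acting on the bracket is then legitimate, and the leftover components can be placed in any order.

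Finally, the commutative squares make $E_\bullet^\Phi(x)$ a chain map, which induces $HE_n^\Phi(x)$. Since $C_\bullet^\Phi(x)$ induces $HH^\Phi_\bullet(x)$ by Proposition \ref{P1.1}, and $\varepsilon_\bullet$ is a chain map intertwining the two, passing to homology gives the final commutative square.
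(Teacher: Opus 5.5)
Your proposal is correct and follows essentially the same route as the paper: for the left square, the identity $x[a,b]=[x_{(1)}(a),x_{(2)}(b)]$ plus relabelling of Sweedler indices via cocommutativity; for the right square, the fact that $C_n^\Phi(x)$ commutes with the $S_n$-action and hence with $\varepsilon_n$. Your explicit check that $E_n^\Phi(x)$ descends to $A\otimes\wedge^nA$, and your separate statement that $\Delta^{(n)}(x)$ is $S_{n+1}$-invariant, make precise two points the paper only asserts when it defines $E_n^\Phi(x)$ and relabels indices.
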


\begin{proof} Since $C$ is cocommutative, we notice that for $x\in C$, and $a$, $b\in A$, we have
\begin{equation} x [a,b]=x(ab)-x(ba)=x_{(1)}(a)x_{(2)}(b)-x_{(1)}(b)x_{(2)}(a)=x_{(1)}(a)x_{(2)}(b)-x_{(2)}(b)x_{(1)}(a)=[x_{(1)}(a),x_{(2)}(b)]
\end{equation}
For  $x\in C$ and $a_0$, $a_1$,..., $a_n\in A$, we now see that
\begin{equation*}\small
\begin{array}{l}
x\cdot \delta(a_0\otimes a_1\wedge ...\wedge a_n)\\
=\underset{i=1}{\overset{n}{\sum}} (-1)^ix_{(1)}[a_0,a_i]\otimes x_{(2)}(a_1)\wedge ... \wedge \hat{a}_i \wedge ... \wedge x_{(n)}( a_n) + \underset{1\leq i\leq j\leq n}{{\sum}} (-1)^{i+j-1} x_{(1)}(a_0)\otimes x_{(2)}[a_i,a_j]\wedge x_{(3)}(a_1)\wedge ... \wedge \hat{a}_i\wedge ... \wedge \hat{a}_j\wedge ...\wedge x_{(n)}(a_n)\\
=\underset{i=1}{\overset{n}{\sum}} (-1)^i[x_{(1)}(a_0),x_{(2)}(a_i)]\otimes x_{(3)}(a_1)\wedge ... \wedge \hat{a}_i \wedge ... \wedge x_{(n+1)}( a_n) \\ \qquad + \underset{1\leq i\leq j\leq n}{{\sum}} (-1)^{i+j-1} x_{(1)}(a_0)\otimes [x_{(2)}(a_i),x_{(3)}(a_j)]\wedge x_{(4)}(a_1)\wedge ... \wedge \hat{a}_i\wedge ... \wedge \hat{a}_j\wedge ...\wedge x_{(n+1)}(a_n)\\
=\underset{i=1}{\overset{n}{\sum}} (-1)^i[x_{(1)}(a_0),x_{(i+1)}(a_i)]\otimes x_{(2)}(a_1)\wedge ... \wedge \hat{a}_i \wedge ... \wedge x_{(n+1)}( a_n) \\ \qquad + \underset{1\leq i\leq j\leq n}{{\sum}} (-1)^{i+j-1} x_{(1)}(a_0)\otimes [x_{(i+1)}(a_i),x_{(j+1)}(a_j)]\wedge x_{(2)}(a_1)\wedge ... \wedge \hat{a}_i\wedge ... \wedge \hat{a}_j\wedge ...\wedge x_{(n+1)}(a_n)\\
=\delta(x_{(1)}(a_0)\otimes x_{(2)}(a_1)\wedge ...\wedge x_{(n+1)}(a_n))
\\ \end{array}
\end{equation*} It follows that the left hand diagram in \eqref{213cd} commutes. Further, we also have
\begin{equation*}
\small
\begin{array}{l}
C_n^\Phi(x)(\varepsilon_n(a_0\otimes a_1\wedge ...\wedge a_n))=x(\varepsilon_n\cdot (a_0\otimes a_1\otimes ...\otimes a_n))=x\left(\underset{\sigma\in S_n}{\sum}sgn(\sigma)(a_0,a_{\sigma^{-1}(1)},...,a_{\sigma^{-1}(n)}) \right)\\=\underset{\sigma\in S_n}{\sum}sgn(\sigma)(x_{(1)}(a_0),x_{(2)}(a_{\sigma^{-1}(1)}),...,x_{(n+1)}(a_{\sigma^{-1}(n)}))
=\underset{\sigma\in S_n}{\sum}sgn(\sigma)(x_{(1)}(a_0),x_{\sigma^{-1}(1)+1}(a_{\sigma^{-1}(1)}),...,x_{\sigma^{-1}(n)+1}(a_{\sigma^{-1}(n)})) =\varepsilon_n(E_n^\Phi(a_0\otimes a_1\wedge ...\wedge a_n))) \\
\end{array}
\end{equation*} Hence, the right hand diagram in \eqref{213cd} also commutes. This proves the result. 
\end{proof}

\smallskip
In the rest of this section, we assume that the algebras $A$, $A'$ are commutative.   We will now study how measurings induce maps on the de Rham complex and relate them to the maps induced on Hochschild and cyclic complexes.  For a commutative $K$-algebra $A$, we set $\Omega_A^0:=A$. We  know that the $A$-module $\Omega_A^1$ of K$\ddot{\mbox{a}}$hler differentials is generated by the $K$-linear symbols $da$ with $a\in A$, subject to the relations $d(ab)=adb+bda$ for $a$, $b\in A$ (see \cite[$\S$ 1.1.9]{Lod}).  

\smallskip
Now let $C$ be a cocommutative coalgebra and let $\Phi:C\longrightarrow Hom_K(A,A')$ be a measuring between commutative algebras. For any $x\in C$, we set 
\begin{equation}\label{omeg2}  \Omega_\Phi^1(x):\Omega_A^1
\longrightarrow \Omega_{A'}^1 \qquad a_0da_1\mapsto x(a_0da_1):=x_{(1)}(a_0)d(x_{(2)}(a_1))
\end{equation} Since $\Phi:C\longrightarrow Hom_K(A,A')$ is a measuring and $C$ is cocommutative, we observe that  for $a_0$, $a_1\in A$, we have
\begin{equation}\label{omeg21}
\begin{array}{ll}
x(d(a_0a_1))=d(x(a_0a_1))=d(x_{(1)}(a_0)x_{(2)}(a_1))&=x_{(1)}(a_0)d(x_{(2)}(a_1))+x_{(2)}(a_1)d(x_{(1)}(a_0))\\
&=x_{(1)}(a_0)d(x_{(2)}(a_1))+x_{(1)}(a_1)d(x_{(2)}(a_0))\\
&=x(a_0da_1)+x(a_1da_0)\\
\end{array}
\end{equation} It follows from \eqref{omeg21} that the map $\Omega_\Phi^1(x)$  in \eqref{omeg2} is well defined. From \eqref{omeg2} and the fact that $\Phi$ is a measuring, it is also clear that $x(am)=x_{(1)}(a)x_{(2)}(m)$ for any $x\in C$, $a\in A$ and $m\in \Omega_A^1$. For each $x\in C$, we also set $\Omega^0_\Phi(x):=\Phi(x): \Omega_A^0=A
\longrightarrow A'=\Omega_{A'}^0$. 

\begin{lem}\label{L2.3} 
Let $C$ be a cocommutative coalgebra and let $\Phi:C\longrightarrow Hom_K(A,A')$ be a measuring between commutative algebras. For $x\in C$ and $p>1$, we have an induced map
\begin{equation}\label{2.11arr}
 \Omega_\Phi^p(x):\Omega_A^p=\wedge_A^p\Omega_A^1
\longrightarrow\wedge^p_{A'} \Omega_{A'}^1 =\Omega_{A'}^p \qquad a_0da_1...da_p\mapsto x(a_0da_1...da_p):=x_{(1)}(a_0)d(x_{(2)}(a_1))...d(x_{(p+1)}a_p)
\end{equation}
\end{lem}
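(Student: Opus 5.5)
We show that the formula in \eqref{2.11arr} is compatible with every relation defining $\wedge^p_A\Omega^1_A$. The cleanest route is to build the map in two stages: first define it on a presentation of $\Omega^p_A$ in which the only relations are $K$-multilinear ones, and then check that it kills the remaining relations.

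We use the standard presentation of $\Omega^p_A$ as a quotient of $A\otimes A^{\otimes p}$. The element $a_0\otimes a_1\otimes\dots\otimes a_p$ goes to $a_0da_1\dots da_p$, and the kernel is spanned by two kinds of elements:
\begin{itemize}
\item[(i)] the Leibniz relations in each slot, $a_0\otimes\dots\otimes a_ia_i'\otimes\dots - a_0a_i\otimes\dots\otimes a_i'\otimes\dots - a_0a_i'\otimes\dots\otimes a_i\otimes\dots$;
\item[(ii)] the alternating relations $a_0\otimes\dots\otimes a_i\otimes\dots\otimes a_i\otimes\dots$, i.e.\ tensors with two equal entries in positions $\geq 1$.
\end{itemize}
Relations (i) together with multilinearity present $(\Omega^1_A)^{\otimes_A p}$, and (ii) then gives the exterior power. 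Since char $K=0$, we could equally use antisymmetry under transpositions in place of (ii).

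On $A\otimes A^{\otimes p}$ we define the $K$-linear map
\[
a_0\otimes\dots\otimes a_p\mapsto x_{(1)}(a_0)\,d(x_{(2)}(a_1))\cdots d(x_{(p+1)}(a_p))\in\Omega^p_{A'} .
\]
This is well defined because it is the composite of $C^\Phi_p(x)$ from \eqref{2.4mao} with the canonical map $A'^{\otimes p+1}\to\Omega^p_{A'}$.

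\emph{Relations (i).} These are handled exactly as in \eqref{omeg21}. Apply the measuring property to $x_{(i+1)}(a_ia_i')=x_{(i+1)}(a_i)x_{(i+2)}(a_i')$, then the Leibniz rule in $\Omega^1_{A'}$, and then move the resulting factor $x_{(\cdot)}(a_i)$ or $x_{(\cdot)}(a_i')$ to the front. Cocommutativity lets us reindex the Sweedler components freely, so the image of the relation is zero. Alternatively, and a bit more conceptually, one can note that $\Omega^1_\Phi$ satisfies $x(am)=x_{(1)}(a)x_{(2)}(m)$ and verify slot by slot.

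\emph{Relations (ii).} Take $a_i=a_j=a$ with $i<j$. The image is a sum of terms containing $d(x_{(i+1)}(a))\cdots d(x_{(j+1)}(a))$, which are not literally equal. Cocommutativity means $x_{(1)}\otimes\dots\otimes x_{(p+1)}$ is invariant under the transposition swapping indices $i+1$ and $j+1$. Applying that swap, the whole expression equals itself with those two differentials interchanged, and in $\Omega^p_{A'}$ this interchange multiplies it by $-1$. So the expression $X$ satisfies $X=-X$, hence $X=0$ because char $K=0$.

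This second step is the main obstacle, since equality of entries is not preserved termwise by Sweedler splitting. The symmetrization argument above is what we would try first. If one prefers to avoid using the characteristic, one can instead check the relation $\omega\cdot da\cdot da\cdot\eta=0$ by expanding via cocommutativity to a symmetric tensor in the two slots.

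Combining both steps, the map descends to $\Omega^p_A\to\Omega^p_{A'}$ and is given by \eqref{2.11arr}. For $p=1$ it agrees with $\Omega^1_\Phi(x)$.
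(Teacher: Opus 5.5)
Your proof is correct, but it is organized differently from the paper's.

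The paper builds $\Omega^p_\Phi(x)$ in stages from the already constructed $\Omega^1_\Phi(x)$. Using the semilinearity $x(am)=x_{(1)}(a)x_{(2)}(m)$ and cocommutativity, it checks that $m_1\otimes\dots\otimes m_p\mapsto x_{(1)}(m_1)\otimes\dots\otimes x_{(p)}(m_p)$ is balanced, hence defined on $\otimes^p_A\Omega^1_A$. It then notes that this map carries each antisymmetry relation $m_1\otimes\dots\otimes m_p-sgn(\sigma)\,m_{\sigma(1)}\otimes\dots\otimes m_{\sigma(p)}$ to a relation of the same shape, which in characteristic zero is enough to pass to $\wedge^p_A$.

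You instead present $\Omega^p_A$ directly as a quotient of $C_p(A)=A^{\otimes p+1}$ via $\pi_p$ and start from $\pi_p\circ C^\Phi_p(x)$. You then kill two families of relations: the slotwise Leibniz relations, by repeating \eqref{omeg21} in each slot, and the relations with two equal entries, by your $X=-X$ argument. The latter is the transposition case of the paper's sign computation.

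What your route buys: it never touches general $1$-forms or the balanced tensor product over $A$. It also exhibits $\Omega^p_\Phi(x)$ as literally the map induced by $C^\Phi_p(x)$ through $\pi_p$, which is exactly the form in which it is used in Propositions \ref{P2.5gx} and \ref{P2.6t}.

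The cost is that you rely on a presentation of $\wedge^p_A\Omega^1_A$ whose alternating relations involve only exact forms $da$. This is standard, but you assert it without proof, and it needs one line of polarization. For $m=\sum_k c_k\,da_k$ one has $m\otimes m=\sum_k c_k^2\,da_k\otimes da_k+\sum_{k<l}c_kc_l\bigl(d(a_k+a_l)\otimes d(a_k+a_l)-da_k\otimes da_k-da_l\otimes da_l\bigr)$.

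The paper's route, in turn, reuses the case $p=1$ and follows the definition $\Omega^p_A=\wedge^p_A\Omega^1_A$ literally.
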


\begin{proof}
For $x\in C$, we first claim that $\Omega^1_\Phi(x):\Omega_A^1
\longrightarrow \Omega_{A'}^1$ induces a map $\otimes^p_A\Omega_A^1\longrightarrow \otimes^p_{A'}\Omega_{A'}^1$ at the level of the $p$-fold tensor product, given by 
setting $x(m_1\otimes ... \otimes m_p):=x_{(1)}(m_1)\otimes ...\otimes x_{(p)}(m_p)$ for $m_1\otimes ...\otimes m_p\in \otimes_A^p\Omega_A^1$. Indeed, for any $a\in A$ and $1\leq j\leq p-1$, we have
\begin{equation}
\begin{array}{l}
x(m_1\otimes ... \otimes am_j\otimes m_{j+1}\otimes ...\otimes m_p) \\=x_{(1)}(m_1)\otimes ... \otimes x_{(j)}(am_j)\otimes x_{(j+1)}(m_{j+1})\otimes ...\otimes x_{(p)}(m_p) \\
=x_{(1)}(m_1)\otimes ... \otimes x_{(j)}(a)x_{(j+1)}(m_j)\otimes x_{(j+2)}(m_{j+1})\otimes ...\otimes x_{(p+1)}(m_p) \\
=x_{(1)}(m_1)\otimes ... \otimes x_{(j+1)}(m_j)\otimes x_{(j)}(a)x_{(j+2)}(m_{j+1})\otimes ...\otimes x_{(p+1)}(m_p) \\
=x_{(1)}(m_1)\otimes ... \otimes x_{(j)}(m_j)\otimes x_{(j+1)}(a)x_{(j+2)}(m_{j+1})\otimes ...\otimes x_{(p+1)}(m_p) \qquad\qquad \qquad\mbox{(since $C$ is cocommutative)}\\
=x_{(1)}(m_1)\otimes ... \otimes x_{(j)}(m_j)\otimes x_{(j+1)}(am_{j+1}) \otimes ...\otimes x_{(p)}(m_p)=x(m_1\otimes ... \otimes m_j\otimes am_{j+1}\otimes ...\otimes m_p) \\
\end{array}
\end{equation} Again since $C$ is cocommutative, for any permutation $\sigma\in S_p$, we note that
\begin{equation}\label{213po}
x(m_1\otimes ...\otimes m_p)-sgn(\sigma)x(m_{\sigma(1)}\otimes ...\otimes m_{\sigma(p)})=x_{(1)}(m_1)\otimes ...\otimes x_{(p)}(m_p)-sgn(\sigma)(x_{\sigma(1)}(m_{\sigma(1)})\otimes ...
\otimes x_{\sigma(p)}(m_{\sigma(p)}))
\end{equation} Since $K$ is a field of characteristic zero, it follows from \eqref{213po} that there is an induced map at the level of exterior products. The result of \eqref{2.11arr} is now
clear from the definition in \eqref{omeg2}.
\end{proof}

For the commutative $K$-algebra $A$, we now consider the complex $(\Omega^\bullet_A,0)$ with zero differential $\Omega^\bullet_A
\overset{0}{\longrightarrow} \Omega^{\bullet-1}_A$. We know (see \cite[$\S$ 1.3]{Lod}) that the maps
\begin{equation}\label{221maps}
\begin{array}{c}
\varepsilon_p: \Omega_A^p\longrightarrow A\otimes A^{\otimes p}=C_p(A)\qquad a_0da_1...da_p\mapsto \varepsilon_p\cdot (a_0\otimes a_1\otimes ...
\otimes a_p)\\
\pi_p: C_p(A)=A\otimes A^{\otimes p}\longrightarrow \Omega_A^p\qquad a_0\otimes a_1\otimes ...
\otimes a_p \mapsto a_0da_1...da_p\\
\end{array}
\end{equation}
induce morphisms of complexes between $(\Omega^\bullet_A,0)$ and $(C^{hoc}_\bullet(A),b)$.  We will now show that the respective maps induced on homology groups by $\varepsilon_\bullet$ and $\pi_\bullet$ are compatible with measurings. 

\begin{thm}\label{P2.5gx}
Let $C$ be a cocommutative coalgebra and let $\Phi:C\longrightarrow Hom_K(A,A')$ be a measuring between commutative algebras. For each $x\in C$, the following diagrams commute.
\begin{equation}\label{222qj}
\begin{array}{lll}
\begin{CD}
\Omega_A^\bullet @>\varepsilon_\bullet>> HH_\bullet(A) \\
@V\Omega^\bullet_\Phi(x)VV @VVHH_\bullet^\Phi(x) V \\
\Omega_{A'}^\bullet @>\varepsilon_\bullet>> HH_\bullet(A') \\
\end{CD}& \qquad\qquad\qquad &
\begin{CD}
HH_\bullet(A) @>\pi_\bullet>> \Omega_A^\bullet \\
@VHH_\bullet^\Phi(x) VV @VV \Omega^\bullet_\Phi(x) V \\
HH_\bullet(A')  @>\pi_\bullet>> \Omega_{A'}^\bullet  \\
\end{CD} \\
\end{array}
\end{equation} 
\end{thm}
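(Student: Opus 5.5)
The plan is to prove both squares at the chain level. Since $(\Omega^\bullet_A,0)$ has zero differential, its homology is $\Omega^\bullet_A$ itself. The maps $\varepsilon_\bullet$ and $\pi_\bullet$ in \eqref{221maps} are chain maps between $(\Omega^\bullet_A,0)$ and $(C^{hoc}_\bullet(A),b)$. The vertical maps $\Omega^\bullet_\Phi(x)$ of Lemma \ref{L2.3} are trivially chain maps for the zero differentials, and $C^\Phi_\bullet(x)$ is a chain map by Proposition \ref{P1.1}. So it suffices to show that, for every $p\geq 0$ and $x\in C$,
\begin{equation*}
C_p^\Phi(x)\circ\varepsilon_p=\varepsilon_p\circ\Omega^p_\Phi(x),\qquad \Omega^p_\Phi(x)\circ\pi_p=\pi_p\circ C^\Phi_p(x).
\end{equation*}
Passing to homology then gives both diagrams in \eqref{222qj}.

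The square with $\pi_p$ is immediate. For $(a_0,\dots,a_p)\in C_p(A)$ we have
\begin{equation*}
\pi_p(C^\Phi_p(x)(a_0,\dots,a_p))=x_{(1)}(a_0)\,d(x_{(2)}(a_1))\cdots d(x_{(p+1)}(a_p)),
\end{equation*}
which is precisely $x(a_0da_1\dots da_p)=\Omega^p_\Phi(x)(\pi_p(a_0,\dots,a_p))$ by the formula \eqref{2.11arr}. Both sides are linear and the elements $(a_0,\dots,a_p)$ span $C_p(A)$, so this square commutes at the chain level.

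For the square with $\varepsilon_p$, I first check that it suffices to test on the generators $a_0da_1\dots da_p$, which span $\Omega^p_A$ over $K$. The map $\varepsilon_p$ is well defined on $\Omega^p_A$ by \cite[$\S$ 1.3]{Lod}, and $\Omega^p_\Phi(x)$ is well defined by Lemma \ref{L2.3}, so only the values on these generators matter. On a generator the computation is the one already done in the second half of the proof of Proposition \ref{P2.4aj}:
\begin{equation*}
C_p^\Phi(x)\bigl(\varepsilon_p\cdot(a_0,\dots,a_p)\bigr)=\sum_{\sigma\in S_p}sgn(\sigma)\bigl(x_{(1)}(a_0),x_{(2)}(a_{\sigma^{-1}(1)}),\dots,x_{(p+1)}(a_{\sigma^{-1}(p)})\bigr).
\end{equation*}
Cocommutativity lets us relabel the Sweedler factors so that $x_{(i+1)}$ is attached to $a_i$. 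The sum then becomes $\varepsilon_p\cdot(x_{(1)}(a_0),\dots,x_{(p+1)}(a_p))$, which equals $\varepsilon_p(x(a_0da_1\dots da_p))$.

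The only delicate point I expect is the well-definedness issue: the relations in $\Omega^p_A$ (Leibniz rule, alternation, $A$-linearity) must be respected so that evaluating on representatives is legitimate. Lemma \ref{L2.3} and the known well-definedness of $\varepsilon_p$ already handle this, so the remaining argument is bookkeeping of Sweedler indices under cocommutativity.
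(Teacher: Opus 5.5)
Your proposal is correct and is essentially the paper's proof: you verify both squares directly on the elements $a_0da_1\cdots da_p$ and $(a_0,\dots,a_p)$, using cocommutativity to reindex the Sweedler factors inside the antisymmetrization. One small caveat, which the paper's own setup in \eqref{221maps} also glosses over: $\varepsilon_p$ is well defined only as a map $\Omega^p_A\to HH_p(A)$, not into $C_p(A)$, because the two lifts of $a_0\,d(a_1a_2)=a_0a_1\,da_2+a_0a_2\,da_1$ differ by the boundary $-b(a_0,a_1,a_2)$, which is in general nonzero. So the $\varepsilon$-square must be read in homology, and there your identity $C^\Phi_p(x)(\varepsilon_p\cdot(a_0,\dots,a_p))=\varepsilon_p\cdot(x(a_0,\dots,a_p))$ on chain-level lifts is exactly what is needed.
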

\begin{proof}
We consider $x\in C$ and $a_0$, $a_1$,...,$a_p\in A$. Using the fact that $C$ is cocommutative, we verify that
\begin{equation*}
\begin{array}{ll}
x(\varepsilon_p(a_0da_1...da_p))=x( \varepsilon_p\cdot (a_0\otimes a_1\otimes ...
\otimes a_p))&=x\left(\underset{\sigma\in S_p}{\sum} sgn(\sigma) (a_0,a_{\sigma^{-1}(1)},...,a_{\sigma^{-1}(p)})\right)\\
&=\underset{\sigma\in S_p}{\sum}sgn(\sigma)(x_{(1)}(a_0),x_{(2)}(a_{\sigma^{-1}(1)}),...,x_{(p+1)}(a_{\sigma^{-1}(p)}))\\
&=\underset{\sigma\in S_p}{\sum}sgn(\sigma)(x_{(1)}(a_0),x_{\sigma^{-1}(1)+1}(a_{\sigma^{-1}(1)}),...,x_{\sigma^{-1}(p)+1}(a_{\sigma^{-1}(p)}))\\
&=\varepsilon_p(x_{(1)}(a_0)d(x_{(2)}(a_1))...d(x_{(p+1)}(a_p)))\\
\end{array}
\end{equation*} This shows that the left hand diagram in \eqref{222qj} commutes. Similarly, to show  that the right hand diagram in \eqref{222qj} commutes, we verify that
\begin{equation*}\small
x(\pi_p(a_0\otimes a_1\otimes  ...\otimes a_p))=x(a_0da_1...da_p)=x_{(1)}(a_0)d(x_{(2)}(a_1))...d(x_{(p+1)}(a_p))=\pi_p(x_{(1)}(a_0)\otimes x_{(2)}(a_1) \otimes ... \otimes x_{(p+1)}(a_p))
\end{equation*} This proves the result. 
\end{proof}

We now  let $HDR^\bullet_A$ denote the de Rham cohomology of $A$, obtained from the complex $(\Omega_A^\bullet,d)$ with differential given by 
\begin{equation} d:\Omega_A^p\longrightarrow \Omega_A^{p+1}\qquad a_0da_1...da_p\mapsto da_0da_1...da_p
\end{equation} Let 
$C$ be a cocommutative coalgebra and let $\Phi:C\longrightarrow Hom_K(A,A')$ be a measuring between commutative algebras.  From the definition in 
\eqref{2.11arr}, it is clear that for each $x\in C$, 
the maps $\Omega^\bullet_\Phi(x)$  induce morphisms $HDR_\Phi^\bullet(x):HDR_A^\bullet \longrightarrow HDR_{A'}^\bullet$ at the level of
de Rham cohomology.

\smallskip
To conclude this section, we come to the relation between cyclic homology and de Rham cohomology. It is well known (see \cite[$\S$ 2.1.7]{Lod}) that the cyclic homology groups of $A$ can also be computed as the total homology of the ``mixed complex'' $\mathscr BC_{\bullet\bullet}(A):=(C_\bullet(A),b,B)$ where $b$ is the Hochschild differential $b:=\underset{i=0}{\overset{n}{\sum}}(-1)^id_i:C_n(A)
\longrightarrow C_{n-1}(A)$ associated to the cyclic module $C_\bullet(A)$ and $B:C_n(A)\longrightarrow C_{n+1}(A)$ is the Connes' boundary operator given by  setting
\begin{equation}\label{connesB}
B:=(-1)^{n+1}(1-t_{n+1})t_{n+1}s_n(1+t_n+...+t_n^n):C_n(A)\longrightarrow C_{n+1}(A)
\end{equation} Further, we know (see \cite[$\S$ 2.3]{Lod}) that the maps $\pi_n/n!: C_n(A)\longrightarrow \Omega_A^n$ for $n\geq 0$ determine a morphism of bicomplexes $\bar{\pi}_{\bullet}: (C_\bullet(A),b,B)\longrightarrow (\Omega_A^\bullet,0,d)$ which induces a map
\begin{equation}
HC_n(A)\xrightarrow{\qquad\bar{\pi}_n\qquad}\Omega^n_A/d\Omega^{n-1}_A\oplus HDR_A^{n-2}\oplus HDR^{n-4}_A\oplus \dots
\end{equation} for each $n\geq 0$.

\begin{thm}
\label{P2.6t}  Let $C$ be a cocommutative coalgebra and let $\Phi:C\longrightarrow Hom_K(A,A')$ be a measuring between commutative algebras. For each $x\in C$, the induced maps fit into the following commutative diagram
\begin{equation}
\begin{CD}
HC_n(A) @>\bar{\pi}_n>> \Omega^n_A/d\Omega^{n-1}_A\oplus HDR_A^{n-2}\oplus HDR^{n-4}_A\oplus \dots\\
@VHC_n^\Phi (x) VV @VV\Omega^n_\Phi/d\Omega^{n-1}_\Phi(x)\oplus HDR_\Phi^{n-2}(x)\oplus HDR_\Phi^{n-4}(x)\oplus \dots V\\
HC_n(A') @>\bar{\pi}_n>> \Omega^n_{A'}/d\Omega^{n-1}_{A'}\oplus HDR_{A'}^{n-2}\oplus HDR^{n-4}_{A'}\oplus \dots\\
\end{CD}
\end{equation}
\end{thm}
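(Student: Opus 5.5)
The plan is to prove commutativity at the level of mixed complexes and then pass to total homology. By Proposition \ref{P1.1}, for each $x\in C$ the maps $C_\bullet^\Phi(x)$ form a morphism of cyclic modules. The operator $B$ in \eqref{connesB} is built entirely from the cyclic operator $t$ and the degeneracy $s_n$, so $C^\Phi_\bullet(x)$ commutes with $B$ as well as with $b$. Hence $C_\bullet^\Phi(x)$ is a morphism of mixed complexes $(C_\bullet(A),b,B)\longrightarrow (C_\bullet(A'),b,B)$. It induces a map on the total complex of $\mathscr BC_{\bullet\bullet}$, and this map realizes $HC^\Phi_\bullet(x)$ under the standard natural quasi-isomorphism between $Tot(C^{cy}_{\bullet\bullet})$ and $Tot(\mathscr BC_{\bullet\bullet})$. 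That quasi-isomorphism is functorial in cyclic modules, so it is compatible with $C^\Phi_\bullet(x)$.

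Next I show that the family $\Omega^\bullet_\Phi(x)$ from Lemma \ref{L2.3} is a morphism of mixed complexes $(\Omega^\bullet_A,0,d)\longrightarrow(\Omega^\bullet_{A'},0,d)$. Compatibility with the zero differential is trivial. Compatibility with $d$ is the computation
\[ x(da_0da_1\cdots da_p)=d(x_{(1)}(a_0))\cdots d(x_{(p+1)}(a_p))=d\bigl(x(a_0da_1\cdots da_p)\bigr). \]
The first equality holds because $da_0\cdots da_p=1\,da_0\cdots da_p$ and $x_{(1)}(1)=\epsilon_C(x_{(1)})1$, which absorbs the extra tensor factor; counitality then reindexes the Sweedler factors. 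The second is the definition of $d$. The same computation is what makes $HDR^\bullet_\Phi(x)$ well defined, as the paper already noted.

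The key step is to check that the square $\bar\pi_\bullet\circ C^\Phi_\bullet(x)=\Omega^\bullet_\Phi(x)\circ\bar\pi_\bullet$ commutes at chain level. Since $\bar\pi_n=\pi_n/n!$, this is exactly the chain-level identity established in the proof of Proposition \ref{P2.5gx} for the right-hand square, divided by $n!$. Therefore the square of morphisms of bicomplexes commutes, and after taking total homology we get a commutative square from $HC_n(A)$ to $H_n(Tot(\Omega^\bullet_A,0,d))$.

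The remaining point, which I expect to be the main obstacle, is to identify the induced map on $H_n(Tot(\Omega^\bullet_A,0,d))$ with the direct sum $\Omega^n_\Phi/d\Omega^{n-1}_\Phi(x)\oplus HDR^{n-2}_\Phi(x)\oplus\cdots$. The total complex in degree $n$ is $\Omega^n_A\oplus\Omega^{n-2}_A\oplus\cdots$, with differential given by $d$ acting between summands of adjacent total degrees. Its homology splits as $\Omega^n_A/d\Omega^{n-1}_A\oplus HDR^{n-2}_A\oplus\cdots$ via the obvious componentwise decomposition, with no choices involved. Because $\Omega^\bullet_\Phi(x)$ acts diagonally on the summands $\Omega^{n-2k}_A$ and commutes with $d$, it preserves cycles and boundaries in each summand separately. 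The induced map on homology is therefore the componentwise map: $\Omega^n_\Phi(x)$ modulo exact forms on the first summand, and $HDR^{n-2k}_\Phi(x)$ on the others. This gives the diagram in the statement.
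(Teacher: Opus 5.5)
Your proposal is correct and follows essentially the same route as the paper's proof. Both arguments realize $HC^\Phi_\bullet(x)$ through the morphism of mixed complexes induced by $C^\Phi_\bullet(x)$, use the chain-level identity from Proposition \ref{P2.5gx} divided by $n!$ to obtain a commutative square of mixed complexes into $(\Omega^\bullet,0,d)$, and then take total homology. The only difference is that you spell out the verification that $\Omega^\bullet_\Phi(x)$ commutes with $d$ and the componentwise identification of the homology of $Tot(\Omega^\bullet_A,0,d)$, both of which the paper leaves implicit.
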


\begin{proof}
We choose $x\in C$.  From \eqref{connesB}, it is clear that the Connes' boundary operator $B$ is obtained from the structure maps of the cyclic module associated to an algebra. From Proposition \ref{P1.1}, we know that the measuring $\Phi$ induces a morphism $C_\bullet^\Phi(x):C_\bullet(A)
\longrightarrow C_\bullet(A')$ of cyclic modules.  It follows that the maps $HC_n^\Phi(x): HC_n(A)\longrightarrow HC_n(A')$ on cyclic homology can also be obtained from the morphism of mixed complexes from $(C_\bullet(A),b,B)$ to $(C_\bullet(A'),b,B)$ induced by the maps   $C_\bullet^\Phi(x):C_\bullet(A)
\longrightarrow C_\bullet(A')$. From the proof of Proposition \ref{P2.5gx}, it is clear that we have $(\pi_n/n!)\circ C_n^\Phi(x)=\Omega^n_\Phi(x)(\pi_n/n!):
C_n(A)\longrightarrow \Omega_{A'}^n$. It follows that we have the following commutative diagram of mixed complexes.
\begin{equation}
\begin{CD}
 (C_\bullet(A),b,B)@>\pi_n/n!>> (\Omega_A^\bullet,0,d)\\
@VC_\bullet^\Phi (x) VV @VV\Omega^\bullet_\Phi (x) V\\
 (C_\bullet(A'),b,B)@>\pi_n/n!>>  (\Omega_{A'}^\bullet,0,d)\\
\end{CD}
\end{equation}
The result is now clear by taking homologies.
\end{proof}

\section{Measurings and the product in cyclic homology}

Let $A$ be a $K$-algebra. Since the ground field $K$ has characteristic zero, we set $\bar{A}:=A/K$. For $n\geq 0$, we set $\bar{C}_n(A):=A\otimes \bar{A}^{\otimes n}$. We continue to denote by $b:\bar{C}_\bullet(A)\longrightarrow 
\bar{C}_{\bullet -1}(A)$ the induced Hochschild differential and by $B:\bar{C}_\bullet(A)\longrightarrow \bar{C}_{\bullet+1}(A)$ the induced Connes operator. We know that the cyclic homology groups $HC_\bullet(A)$ of $A$
may also be computed from the normalized mixed complex ${\overline{\mathscr BC}}_{\bullet\bullet}(A):=(\bar{C}_\bullet(A),b,B)$:
\begin{equation}\label{mc3.1}
\begin{CD}
\bar{C}_2(A)=A\otimes \bar{A}^{\otimes 2} @<B<<  \bar{C}_1(A)=A\otimes \bar{A} @<B<< \bar{C}_0(A)=A\\
@VbVV @VbVV @. \\
\bar{C}_1(A)=A\otimes \bar{A} @<B<< \bar{C}_0(A)=A @. \\
@VbVV @. @. \\
 \bar{C}_0(A)=A  @. @.
\end{CD}
\end{equation}
Similarly, the Hochschild homology groups $HH_\bullet(A)$ can also be computed from the normalized Hochschild complex $(\bar{C}_\bullet(A),b)$, which appears as the columns in \eqref{mc3.1}. 

\begin{thm}
\label{P3.1} Let $C$ be a cocommutative coalgebra and let $\Phi:C\longrightarrow Hom_K(A,A')$ be a measuring between algebras. For each $x\in C$, there are well-defined maps
\begin{equation}
\bar{C}^\Phi_n(x):\bar{C}_n(A)\longrightarrow \bar{C}_n(A')\qquad (a_0,a_1,...,a_n)\mapsto x(a_0,a_1,...,a_n):=(x_{(1)}(a_0),x_{(2)}(a_1),...,x_{(n+1)}(a_n))
\end{equation} which induce a morphism $(\bar{C}_\bullet(A),b)\longrightarrow (\bar{C}_\bullet(A'),b)$ on normalized Hochschild complexes as well as on the normalized mixed complexes $(\bar{C}_\bullet(A),b,B)\longrightarrow (\bar{C}_\bullet(A'),b,B)$. The corresponding induced maps on Hochschild homology and on cyclic homology are identical to 
$HH_\bullet^\Phi(x)$ and $HC^\Phi_\bullet(x)$ respectively. 
\end{thm}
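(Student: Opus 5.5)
The plan is to obtain the normalized maps as quotients of the maps $C_n^\Phi(x)$ from Proposition \ref{P1.1}, and then deduce all the remaining claims from that proposition and the naturality of the normalization quasi-isomorphisms.

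\textbf{Well-definedness.} Here $\bar{C}_n(A)=A\otimes\bar A^{\otimes n}$ is the quotient of $C_n(A)$ by the $K$-subspace $D_n(A)$ spanned by tensors $(a_0,\dots,a_n)$ with $a_i=1$ for some $i\geq 1$. Equivalently, $D_n(A)$ is the sum of the images of the degeneracies $s_j$. Proposition \ref{P1.1} shows that $C^\Phi_\bullet(x)$ commutes with every $s_j$, so $C_n^\Phi(x)(D_n(A))\subseteq D_n(A')$. One can also check this directly: if $a_i=1$, then $x_{(i+1)}(1)=\epsilon_C(x_{(i+1)})1$, and the counit collapses the Sweedler index. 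Hence $C_n^\Phi(x)$ descends to a map $\bar{C}^\Phi_n(x)$, which is given by the stated formula.

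\textbf{Compatibility with $b$ and $B$.} On $C_\bullet(A)$, the operator $b$ is an alternating sum of faces, and $B$ in \eqref{connesB} is built from $t$ and $s_n$. Proposition \ref{P1.1} says $C^\Phi_\bullet(x)$ is a morphism of cyclic modules, so it commutes with $b$ and with $B$. It therefore gives a morphism of mixed complexes $(C_\bullet(A),b,B)\to(C_\bullet(A'),b,B)$; this was already used in the proof of Proposition \ref{P2.6t}.

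The normalized $b$ and $B$ are the operators induced on the quotients by $D_\bullet$. This is standard: $D_\bullet$ is $b$-stable, and $B$ descends on the normalized complex, where it becomes $\sum t^i$ composed with the extra degeneracy. Because the projections $C_\bullet\to\bar C_\bullet$ are surjective and the maps on both sides are induced from the same formulas, commutation with $b$ and with $B$ passes to the quotients. So $\bar C^\Phi_\bullet(x)$ is a morphism of normalized Hochschild complexes and of normalized mixed complexes.

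\textbf{Identification with $HH^\Phi_\bullet(x)$ and $HC^\Phi_\bullet(x)$.} We have a commutative square: the projection $C_\bullet(A)\to\bar C_\bullet(A)$, followed by $\bar C^\Phi_\bullet(x)$, equals $C^\Phi_\bullet(x)$ followed by the projection for $A'$. This square is a map of mixed complexes. The projections are quasi-isomorphisms of Hochschild complexes, since normalization holds in characteristic $0$ (in fact over any ring). A morphism of mixed complexes that is a quasi-isomorphism on the $b$-columns induces an isomorphism on cyclic homology, via the $SBI$ sequence and the five lemma, or via the spectral sequence of the column filtration. Hence on Hochschild homology the normalized map corresponds to $HH^\Phi_\bullet(x)$. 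For cyclic homology, $HC^\Phi_\bullet(x)$ was computed from the unnormalized mixed complex $(C_\bullet,b,B)$ in Proposition \ref{P2.6t}, and that computation agrees with the one from the cyclic bicomplex, naturally in cyclic modules. So the normalized map also corresponds to $HC^\Phi_\bullet(x)$.

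\textbf{Main obstacle.} I expect the main obstacle to be the last identification. It requires the comparison isomorphisms between the cyclic bicomplex $C^{cy}_{\bullet\bullet}$, the mixed complex $\mathscr{B}C$, and its normalized version to be natural with respect to arbitrary morphisms of cyclic modules. This holds because each comparison map in \cite[\S 2.1, \S 2.5]{Lod} is built from the structure maps alone, so I would cite that naturality rather than reprove it.
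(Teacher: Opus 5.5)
Your proposal is correct and follows the same route as the paper. You descend $C^\Phi_n(x)$ to the normalized complexes; the paper uses exactly your direct counit check $x_{(i+1)}(1)=\epsilon_C(x_{(i+1)})1$, and your degeneracy argument is an equivalent repackaging of it. You then transfer commutation with $b$ and $B$ through the surjective projections using Proposition \ref{P1.1}, and identify the induced maps via the normalization quasi-isomorphisms. Your added remarks on the naturality of the comparison between the cyclic bicomplex and the (normalized) mixed complex make explicit what the paper asserts in one line.
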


\begin{proof}
For any $i\geq 1$, we note that
\begin{equation}\label{eq3.2}
\begin{array}{ll}
x(a_0,...,a_{i-1},1,a_{i+1},...,a_n)&=(x_{(1)}(a_0),...,x_{(i)}(a_{i-1}),x_{(i+1)}(1),x_{(i+2)}(a_{i+1}),...,x_{(n+1)}(a_n))\\
&=\epsilon_C(x_{(i+1)})(x_{(1)}(a_0),...,x_{(i)}(a_{i-1}), 1,x_{(i+2)}(a_{i+1}),...,x_{(n+1)}(a_n))\\
\end{array}
\end{equation} From \eqref{eq3.2}, it is clear that the maps $C_n^\Phi(x):C_n(A)\longrightarrow C_n(A')$ descend to maps $\bar{C}^\Phi_n(x):\bar{C}_n(A)\longrightarrow \bar{C}_n(A')$. From Proposition \ref{P1.1}, we know that the maps $C_\bullet^\Phi(x)$ are well behaved with respect to the operators $b$ and $B$.  Now since the canonical maps $C_\bullet(A)\longrightarrow \bar{C}_\bullet(A)$ and $C_\bullet(A')\longrightarrow \bar{C}_\bullet(A')$ are epimorphisms, it follows that the following diagrams commute:
\begin{equation}
\begin{array}{ccc}
\begin{CD}
\bar{C}_\bullet(A) @>b>> \bar{C}_{\bullet-1}(A)\\
@V\bar{C}_\bullet^\Phi(x)VV @VV\bar{C}_{\bullet-1}^\Phi(x)V \\
\bar{C}_\bullet(A') @>b>> \bar{C}_{\bullet-1}(A')\\
\end{CD}
& \qquad\qquad & 
\begin{CD}
\bar{C}_\bullet(A) @>B>> \bar{C}_{\bullet+1}(A)\\
@V\bar{C}_\bullet^\Phi(x)VV @VV\bar{C}_{\bullet+1}^\Phi(x)V \\
\bar{C}_\bullet(A') @>B>> \bar{C}_{\bullet+1}(A')\\
\end{CD}\\
\end{array}
\end{equation}
Accordingly, we have  maps  $(\bar{C}_\bullet(A),b)\longrightarrow (\bar{C}_\bullet(A'),b)$ and $(\bar{C}_\bullet(A),b,B)\longrightarrow (\bar{C}_\bullet(A'),b,B)$ of complexes induced by the measuring $\Phi$. Finally, since the canonical maps $C_\bullet(A)\longrightarrow \bar{C}_\bullet(A)$ and $C_\bullet(A')\longrightarrow \bar{C}_\bullet(A')$ induce isomorphisms at the homology level, it follows that the induced maps on Hochschild homology and on cyclic homology are identical to 
$HH_\bullet^\Phi(x)$ and $HC^\Phi_\bullet(x)$ respectively. 
\end{proof}

\begin{rem}\label{R3.2} \emph{For $n\geq 0$, let $\tilde{C}_n(A):=A^{\otimes n+1}/(1-t)$, where $t$ is the cyclic operator as described in Section 2. Then, the Hochschild differential $b$ descends to the quotients $\tilde{C}_\bullet(A)$ and the cyclic homology of $A$ may be computed by means of the complex $(\tilde C_\bullet(A),b)$ (see \cite[$\S$ 2.1.4]{Lod}). By similar reasoning as in the proof of Proposition \ref{P3.1}, we can verify that the measuring $\Phi:C\longrightarrow Hom_K(A,A')$ induces a morphism of complexes  $\tilde C_\bullet^\Phi(x):\tilde C_\bullet(A)\longrightarrow 
\tilde C_\bullet(A')$ for each $x\in C$. Further, the induced maps on homology groups are identical to $HC_\bullet^\Phi(x)$, for each $x\in C$.  }

\end{rem}

Suppose now that $A$ is commutative. For $p$, $q\geq 1$, a permutation $\sigma\in S_{p+q}$ is said to be a $(p,q)$ shuffle if it satisfies $\{\sigma(1)<\dots <\sigma(p)\}$ and $\{\sigma(p+1)<\dots <\sigma(p+q)\}$. 
We let $S_{p,q}$ denote the set of all $(p,q)$-shuffles. Then, we know (see \cite[$\S$ 4.2]{Lod}) that the operation
\begin{equation}\label{shf3.3}
\begin{array}{c}
\_\_ \times \_\_ : C_p(A)\otimes C_q(A)\longrightarrow C_{p+q}(A)\\ \\
(a_0,a_1,...,a_p)\otimes (b_0,b_1,...,b_q)\mapsto \underset{\sigma\in S_{p,q}}{\mathlarger \sum} sgn(\sigma) \sigma\cdot (a_0b_0,a_1,...,a_p,b_1,...,b_q)
\\
\end{array}
\end{equation} induces a product $\_\_ \times \_\_: HH_p(A)\otimes HH_q(A)\longrightarrow HH_{p+q}(A)$ that makes the Hochschild homology of  the commutative $K$-algebra $A$ into a graded ring $(HH_\bullet(A),\times)$.

\smallskip
In the rest of this section, we will always assume that the algebras $A$, $A'$ are commutative. In \cite[Proposition 2.3]{BK1}, it was shown that if $\Phi:C\longrightarrow Hom_K(A,A')$ is a measuring between commutative algebras, then we have an induced measuring
\begin{equation}\label{3.6ms}
\Phi^\times: C\longrightarrow Hom_K(HH_\bullet(A),HH_\bullet(A')) \qquad x\mapsto HH_\bullet^\Phi(x):=\{HH_n^\Phi(x):HH_n(A)\longrightarrow HH_n(A')\}_{n\geq 0}
\end{equation}
between the algebras  $(HH_\bullet(A),\times)$ and $(HH_\bullet(A'),\times)$. In other words, for each $x\in C$, we can write
\begin{equation}\label{klausmet}
x\cdot ((a_0,a_1,...,a_p)\times (b_0,b_1,...,b_q))=x_{(1)}(a_0,a_1,...,a_p)\times x_{(2)}(b_0,b_1,...,b_q)
\end{equation} for $(a_0,a_1,...,a_p)\in C_p(A)$ and $(b_0,b_1,...,b_q)\in C_q(A)$. We will now do something similar for the product structure on cyclic homology. For this, we recall (see 
\cite[Theorem 3.3]{LQ} or \cite[$\S$ 4.4]{Lod}) that the operation on the total complexes
\begin{equation}\label{ast3}
\begin{array}{c}
\_\_\ast\_\_: Tot(\overline{\mathscr BC}_{\bullet\bullet} (A))_p \otimes Tot(\overline{\mathscr BC}_{\bullet\bullet}(A))_q \longrightarrow Tot(\overline{\mathscr BC}_{\bullet\bullet}(A))_{p+q+1} \qquad 
\tilde{a}\otimes \tilde{b}\mapsto \tilde{a}\ast\tilde{b} \\  \\
(\tilde{a}_p,\tilde{a}_{p-2},...)\otimes (\tilde{b}_q,\tilde{b}_{q-2},...)\mapsto (B\tilde{a}_p\times \tilde{b}_q,B\tilde{a}_{p}\times \tilde{b}_{q-2},...)  \\
\end{array}
\end{equation} for $\tilde{a}=(\tilde{a}_p,\tilde{a}_{p-2},...)\in Tot(\overline{\mathscr BC}_{\bullet\bullet}(A))_p=\bar{C}_p(A)\oplus \bar{C}_{p-2}(A)\oplus ...$ and 
$\tilde{b}=(\tilde{b}_q,\tilde{b}_{q-2},...)\in Tot(\overline{\mathscr BC}_{\bullet\bullet}(A))_q=\bar{C}_q(A)\oplus \bar{C}_{q-2}(A)\oplus ...$ induces a product structure
\begin{equation}\label{hcprod}
\_\_\ast\_\_:  HC_{p}(A)\otimes HC_q(A)\longrightarrow HC_{p+q+1}(A)
\end{equation} By a shift in degree, setting $HC'_\bullet(A):=HC_{\bullet-1}(A)$, we know (see \cite[$\S$ 3]{LQ}) that the product in \eqref{hcprod} makes cyclic homology into an associative graded $K$-algebra $(HC'_\bullet(A),\ast)$.

\begin{Thm}\label{T3.2}
Let $C$ be a cocommutative coalgebra and let $\Phi:C\longrightarrow Hom_K(A,A')$ be a measuring between commutative algebras. Then, the maps
\begin{equation}
HC'^\Phi_\bullet(x):HC'_\bullet(A)=HC_{\bullet-1}(A)\xrightarrow{\qquad HC_{\bullet-1}^\Phi(x)\qquad}HC_{\bullet-1}(A')=HC'_{\bullet}(A') \qquad x\in C
\end{equation} determine a measuring
\begin{equation}
\Phi^\ast: C\longrightarrow Hom_K(HC'_\bullet(A),HC'_\bullet(A'))\qquad x\mapsto \Phi^\ast(x):=HC'^\Phi_\bullet(x):HC'_\bullet(A)\longrightarrow HC'_\bullet(A')
\end{equation} of algebras from $(HC'_\bullet(A),\ast)$ to $(HC'_\bullet(A'),\ast)$.
\end{Thm}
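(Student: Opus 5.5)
We prove the measuring identities at the level of the normalized mixed complexes, using Proposition \ref{P3.1} together with the chain-level multiplicativity \eqref{klausmet} of the shuffle product. For each $x\in C$, Proposition \ref{P3.1} gives a morphism of normalized mixed complexes $\bar{C}^\Phi_\bullet(x):(\bar{C}_\bullet(A),b,B)\longrightarrow(\bar{C}_\bullet(A'),b,B)$ inducing $HC^\Phi_\bullet(x)$. It therefore induces a chain map on total complexes
\[
Tot(\bar{C}^\Phi(x))(\tilde{a}_p,\tilde{a}_{p-2},\dots):=(x(\tilde{a}_p),x(\tilde{a}_{p-2}),\dots),
\]
and this map computes $HC^\Phi_\bullet(x)$ on $Tot(\overline{\mathscr BC}_{\bullet\bullet}(A))$.

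The key step is the chain-level identity
\[
x\cdot(\tilde{a}\ast\tilde{b})=x_{(1)}(\tilde{a})\ast x_{(2)}(\tilde{b})
\]
in $Tot(\overline{\mathscr BC}_{\bullet\bullet}(A'))_{p+q+1}$. By \eqref{ast3}, each component of $\tilde{a}\ast\tilde{b}$ has the form $B\tilde{a}_i\times\tilde{b}_j$. Applying \eqref{klausmet}, which descends to the normalized complexes since the quotient maps are surjective and the shuffle product is compatible with them, gives
\[
x(B\tilde{a}_i\times\tilde{b}_j)=x_{(1)}(B\tilde{a}_i)\times x_{(2)}(\tilde{b}_j).
\]
Proposition \ref{P3.1} says $\bar C^\Phi(x)$ commutes with $B$, so this equals $B(x_{(1)}\tilde{a}_i)\times x_{(2)}(\tilde{b}_j)$. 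Summing over the components yields the identity. Passing to homology, we get
\[
HC'^\Phi(x)(\alpha\ast\beta)=HC'^\Phi(x_{(1)})(\alpha)\ast HC'^\Phi(x_{(2)})(\beta)
\]
for classes $\alpha,\beta$. The Sweedler sum causes no difficulty here because it is a finite sum of chain-level products of cycles.

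The unit condition needs separate care, and it is the main obstacle. In the Loday--Quillen description, $(HC'_\bullet(A),\ast)$ is a priori only a nonunital graded algebra with product $HC_{p}\otimes HC_q\to HC_{p+q+1}$, so nothing in degree $HC'_0=HC_{-1}=0$ serves as a unit. The plan is to interpret the measuring condition for a possibly nonunital algebra as the multiplicativity condition alone. If a unit is adjoined formally, i.e. $K\oplus HC'_\bullet(A)$, we set $\Phi^\ast(x)(1):=\epsilon_C(x)1$ and extend linearly. The multiplicativity identities involving the unit then follow immediately from counitality $\epsilon(x_{(1)})x_{(2)}=x$, so the unit condition holds by construction.

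Finally, we must check that the chain-level formula \eqref{klausmet} indeed holds for all $p,q\geq0$, including the degree-zero terms. This follows from the proof of \cite[Proposition 2.3]{BK1} quoted above, using commutativity of $A'$ and cocommutativity of $C$ to reorder the Sweedler indices after the shuffle permutation.
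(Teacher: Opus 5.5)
Your proposal is correct and follows essentially the same route as the paper: work on the normalized total complex, apply \eqref{klausmet} componentwise to the terms $B\tilde a_i\times\tilde b_j$, use the compatibility of $\bar C^\Phi_\bullet(x)$ with $B$ from Proposition \ref{P3.1}, and pass to homology. Your treatment of the unit also matches the paper's subsequent remark: $(HC'_\bullet(A),\ast)$ is non-unital, so only the multiplicativity condition in \eqref{ms1.1} is required.
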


\begin{proof}
Let $x\in C$. From Proposition \ref{P3.1}, we know that the operators $\bar{C}^\Phi_n(x):\bar{C}_n(A)\longrightarrow \bar{C}_n(A')$ are well behaved with respect to the Connes' boundary operators
$B$ on the normalized mixed complexes $\overline{\mathscr BC}_{\bullet\bullet}(A)$ and $\overline{\mathscr BC}_{\bullet\bullet}(A')$. We now consider $\tilde{a}=(\tilde{a}_p,\tilde{a}_{p-2},...)\in Tot(\overline{\mathscr BC}_{\bullet\bullet}(A))_p=\bar{C}_p(A)\oplus \bar{C}_{p-2}(A)\oplus ...$ and 
$\tilde{b}=(\tilde{b}_q,\tilde{b}_{q-2},...)\in Tot(\overline{\mathscr BC}_{\bullet\bullet}(A))_q=\bar{C}_q(A)\oplus \bar{C}_{q-2}(A)\oplus ...$ as in \eqref{ast3}. Using \eqref{klausmet} and the definition of the product $\ast$ in \eqref{ast3}, we see that
\begin{equation*}
\begin{array}{ll}
x(\tilde a\ast \tilde b)=x((\tilde{a}_p,\tilde{a}_{p-2},...)\ast (\tilde{b}_q,\tilde{b}_{q-2},...)) 
&=  x(B\tilde{a}_p\times \tilde{b}_q,B\tilde{a}_{p}\times \tilde{b}_{q-2},...) \\
&= (x(B\tilde{a}_p\times \tilde{b}_q),x(B\tilde{a}_{p}\times \tilde{b}_{q-2}),...) \\
&=(x_{(1)}(B\tilde a_p)\times x_{(2)}(\tilde b_q),x_{(1)}(B\tilde a_p)\times x_{(2)}(\tilde b_{q-2}),...)\\
&=(B(x_{(1)}(\tilde a_p))\times x_{(2)}(\tilde b_q),B(x_{(1)}(\tilde a_p))\times x_{(2)}(\tilde b_{q-2}),...)\\
&=(x_{(1)}(\tilde a_p),x_{(1)}(\tilde a_{p-2}),...)\ast (x_{(2)}(\tilde b_q),x_{(2)}(\tilde b_{q-2}),...) \\
&=x_{(1)}(\tilde a)\ast x_{(2)}(\tilde b)\\
\end{array}
\end{equation*} The result is now clear. 
\end{proof}

\begin{rem}
\emph{We note that the ring $(HC'_\bullet(A),\ast)$ is non-unital, since $HC'_0(A)=0$. Accordingly, the measuring in Theorem \ref{T3.2} is also a measuring of non-unital algebras, i.e., it satisfies only the condition in \eqref{ms1.1} involving the coproduct on $C$.}
\end{rem} 

We know (see \cite[$\S$ 4.4.4]{Lod}) that the  maps $B:HC'_p(A)=HC_{p-1}(A)\longrightarrow HH_p(A)$ appearing in the periodicity sequence \eqref{pr2.6} 
determine a map of graded algebras from $(HC'_\bullet(A),\ast)$ to $(HH_\bullet(A),\times)$. At the chain  level, this map is induced by the operator $B$ as follows
\begin{equation}\label{alconB}
\begin{array}{c}
Tot(\overline{\mathscr BC}_{\bullet\bullet} (A))_{p-1} =\bar{C}_{p-1}(A)\oplus \bar{C}_{p-3}(A)\oplus ... \longrightarrow \bar{C}_{p}(A)\qquad \qquad
(\tilde a_{p-1},\tilde a_{p-3},...)\mapsto B\tilde a_{p-1}
\end{array}
\end{equation} Combining with the shuffle product on the Hochschild homology ring $(HH_\bullet(A),\times)$, it follows that $HH_\bullet(A)$ becomes a module over $(HC'_\bullet(A),\ast)$  with action determined by
\begin{equation}\label{alconB1}
\begin{array}{c}
\ast: HC'_p(A) \otimes HH_q(A)\longrightarrow HH_{p+q}(A)\qquad\qquad
 (\tilde a_{p-1},\tilde a_{p-3},...)\otimes \tilde b_q \mapsto B\tilde a_{p-1}\times \tilde b_q
\end{array}
\end{equation}
If $D$ is a $C$-comodule, and $M$, $M'$ are modules over $A$ and $A'$ respectively, we recall that a comodule measuring (see for instance, \cite{Bat}, \cite{Vas1}) with respect to 
$(C,\Phi)$ consists of a map
\begin{equation}\label{mescom}
\begin{array}{c}
\Psi: D\longrightarrow Hom_K(M,M')\qquad \Psi(d)(am)=d(am)= d_{(1)}(a)d_{(2)}(m)=\Phi(d_{(1)})(a)\Psi(d_{(2)})(m)\quad d\in D, a\in A, m\in M\\
\end{array}
\end{equation}
where the $C$-comodule structure on $D$ is given by $d\mapsto  d_{(1)}\otimes d_{(2)}\in C\otimes D$ in Sweedler notation, with summation signs suppressed.

\begin{thm}\label{P3.3st}
Let $C$ be a cocommutative coalgebra and let $\Phi:C\longrightarrow Hom_K(A,A')$ be a measuring between commutative algebras. Then, the maps
\begin{equation}\label{mescom3.15}
\begin{array}{c}
\Phi^\ast:  C\longrightarrow Hom_K(HC'_\bullet(A),HC'_\bullet(A'))\qquad x\mapsto \Phi^\ast(x):=HC'^\Phi_\bullet(x):HC'_\bullet(A)\longrightarrow HC'_\bullet(A') \\
\Phi^\times:  C\longrightarrow Hom_K(HH_\bullet(A),HH_\bullet(A')) \qquad x\mapsto \Phi^\times (x)=HH_\bullet^\Phi(x):HH_\bullet(A)\longrightarrow HH_\bullet(A')\\
\end{array}
\end{equation} determine a comodule measuring with respect to $(C,\Phi^\ast)$ from the $(HC'_\bullet(A),\ast)$-module $HH_\bullet(A)$ to the $(HC'_\bullet(A'),\ast)$-module $HH_\bullet(A')$, where $C$ is treated as a $C$-comodule.
\end{thm}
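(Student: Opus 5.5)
We must show that for $x\in C$, $\tilde a\in HC'_p(A)$ and $\tilde b\in HH_q(A)$ we have
\[
HH^\Phi_{p+q}(x)(\tilde a\ast\tilde b)=HC'^\Phi_p(x_{(1)})(\tilde a)\ast HH^\Phi_q(x_{(2)})(\tilde b).
\]
Here $C$ is regarded as a $C$-comodule via $\Delta_C$, so the comodule measuring condition \eqref{mescom} reduces to exactly this identity. The plan is to verify it at chain level, using the explicit formula \eqref{alconB1}, and then pass to homology.

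\textbf{Step 1: chain-level representatives.} By Proposition \ref{P3.1}, both $HC^\Phi_\bullet(x)$ and $HH^\Phi_\bullet(x)$ are induced by the maps $\bar C^\Phi_\bullet(x)$ on the normalized complexes. In particular, $HC'^\Phi_\bullet(x)$ is induced by applying $\bar C^\Phi(x)$ componentwise on $Tot(\overline{\mathscr BC}_{\bullet\bullet}(A))_{p-1}$. So take a cycle $(\tilde a_{p-1},\tilde a_{p-3},\dots)$ representing $\tilde a$ and a Hochschild cycle $\tilde b_q\in\bar C_q(A)$ representing $\tilde b$.

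\textbf{Step 2: the computation.} The class $\tilde a\ast\tilde b$ is represented by $B\tilde a_{p-1}\times\tilde b_q$. Applying $x$, the shuffle product identity \eqref{klausmet} gives
\[
x(B\tilde a_{p-1}\times\tilde b_q)=x_{(1)}(B\tilde a_{p-1})\times x_{(2)}(\tilde b_q).
\]
Strictly, \eqref{klausmet} is stated on unnormalized chains, but it descends to $\bar C_\bullet$ because the projections $C_\bullet\to\bar C_\bullet$ are surjective and compatible with both $\times$ and $\bar C^\Phi(x)$. Next, Proposition \ref{P3.1} says $\bar C^\Phi(x)$ commutes with $B$, so the right-hand side equals $B(x_{(1)}(\tilde a_{p-1}))\times x_{(2)}(\tilde b_q)$. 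By \eqref{alconB1}, this is a representative of $HC'^\Phi_p(x_{(1)})(\tilde a)\ast HH^\Phi_q(x_{(2)})(\tilde b)$, since $(x_{(1)}(\tilde a_{p-1}),x_{(1)}(\tilde a_{p-3}),\dots)$ represents $HC'^\Phi(x_{(1)})(\tilde a)$. Passing to homology yields the required identity.

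\textbf{Step 3: bookkeeping.} One should check that summing over the Sweedler components is harmless, since everything is $K$-linear in $x\otimes$ chains. One should also recall that $\Phi^\ast$ is itself a measuring by Theorem \ref{T3.2}, so the phrase ``comodule measuring with respect to $(C,\Phi^\ast)$'' makes sense. No unit condition is needed, as in the remark following Theorem \ref{T3.2}.

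The main point requiring care is Step 2, specifically the validity of \eqref{klausmet} on the normalized complex and its compatibility with $B$. This rests on \cite[Proposition 2.3]{BK1} together with the epimorphism argument already used in the proof of Proposition \ref{P3.1}.
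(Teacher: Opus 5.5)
Your proposal is correct and follows the paper's own proof. The paper makes the same chain-level computation in the normalized mixed complex: it applies the shuffle-product measuring identity \eqref{klausmet}, commutes $\bar C^\Phi(x_{(1)})$ past $B$ using Proposition \ref{P3.1}, and then reads off $\Phi^\ast(x_{(1)})(\tilde a)\ast\Phi^\times(x_{(2)})(\tilde b)$ via \eqref{alconB1}. Your extra check that \eqref{klausmet} descends to $\bar C_\bullet$ (the shuffle product sends degenerate chains to degenerate chains) is a detail the paper leaves implicit.
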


\begin{proof}
We consider $x\in C$ along with  $\tilde{a}=(\tilde{a}_{p-1},\tilde{a}_{p-3},...)\in Tot(\overline{\mathscr BC}_{\bullet\bullet}(A))_{p-1}=\bar{C}_{p-1}(A)\oplus \bar{C}_{p-3}(A)\oplus ...$ and $\tilde b_q\in \bar{C}_q(A)$. Again, we know from  Proposition \ref{P3.1} that the operators $\bar{C}^\Phi_n(x):\bar{C}_n(A)\longrightarrow \bar{C}_n(A')$ are well behaved with respect to the  Connes operators
$B$ on the normalized mixed complexes. Combining with the fact that $\Phi^\times:  C\longrightarrow Hom_K(HH_\bullet(A),HH_\bullet(A'))$ is a measuring of algebras with respect to the shuffle product $\times$, we have
\begin{equation*}
\begin{array}{ll}
\Phi^\times(x)((\tilde a_{p-1},\tilde a_{p-3},...)\ast\tilde b_q )=x((\tilde a_{p-1},\tilde a_{p-3},...)\ast \tilde b_q )& =x( B\tilde a_{p-1}\times \tilde b_q)\\
&= x_{(1)}(B\tilde a_{p-1})\times x_{(2)}(\tilde b_q)\\
&=B(x_{(1)}(\tilde a_{p-1}))\times x_{(2)}(\tilde b_q) \\
&=x_{(1)}(\tilde a_{p-1},\tilde a_{p-3},...)\ast x_{(2)}(\tilde b_q) \\
&= \Phi^\ast(x_{(1)})(\tilde a_{p-1},\tilde a_{p-3},...)\ast \Phi^\times(x_{(2)})(\tilde b_q)\\
\end{array}
\end{equation*} This proves the result.
\end{proof}

Given $K$-algebras $R$, $R'$ (not necessarily commutative), we know (see Sweedler \cite[Chapter VII]{Sweed}) that there is a  coalgebra $\mathcal M(R,R')$ and a measuring $\Psi(R,R'):
\mathcal M(R,R')\longrightarrow Hom_K(R,R')$ that is universal in the category of coalgebra measurings from $R$ to $R'$. In other words, given any coalgebra measuring 
$\Phi:C\longrightarrow Hom_K(R,R')$, there exists a unique coalgebra map $\zeta: C\longrightarrow \mathcal M(R,R')$ such that $\Phi=\Psi(R,R')\circ\zeta$. The coalgebra 
$\mathcal M(R,R')$ is referred to as the universal measuring coalgebra from $R$ to $R'$. It is well known (see for instance, \cite{AJ}) that this leads to an enrichment of $K$-algebras over $K$-coalgebras. In other words, one can define a category $ALG_K$ whose objects are $K$-algebras and whose Hom objects are given by 
$
ALG_K(R,R'):=\mathcal M(R,R')
$ for $K$-algebras $R$, $R'$. 

\smallskip
In the same manner, we have a measuring $\Psi^c(R,R'):
\mathcal M_c(R,R')\longrightarrow Hom_K(R,R')$ that is universal in the category of cocommutative coalgebra measurings from $R$ to $R'$, i.e., coalgebra measurings $\Phi:C\longrightarrow Hom_K(R,R')$ such that $C$ is cocommutative. We note that  $\mathcal M_c(R,R')$ is the cocommutative part of the universal measuring coalgebra $\mathcal M(R,R')$ (see 
\cite[Proposition 1.4]{GM1}). Accordingly, we can also define the category $ALG_K^c$ whose objects are $K$-algebras and whose Hom objects are given by 
$
ALG_K^c(R,R'):=\mathcal M_c(R,R')
$ for $K$-algebras $R$, $R'$. 

\smallskip
 We also consider the full subcategory $cALG_K$ of $ALG^c_K$ whose objects are commutative $K$-algebras and whose Hom objects are given by 
\begin{equation}
cALG_K(A,A'):=\mathcal M_c(A,A')
\end{equation} for commutative $K$-algebras $A$, $A'$. This gives us an enrichment of commutative $K$-algebras over the symmetric monoidal category $coCoalg_K$ of cocommutative $K$-coalgebras.

\smallskip
It is easy to see that the above constructions extend to algebras that are not necessarily unital. Given a commutative $K$-algebra $A$, we   consider the ring $(HC'_\bullet(A),\ast)$ as described above. We now define a new category $c\widetilde{ALG}_K$ whose objects are commutative $K$-algebras and whose Hom objects are given by
\begin{equation}
c\widetilde{ALG}_K(A,A'):=\mathcal M_c((HC'_\bullet(A),\ast),(HC'_\bullet(A'),\ast))
\end{equation} for commutative $K$-algebras $A$, $A'$. This gives us another enrichment of commutative $K$-algebras over the symmetric monoidal category $coCoalg_K$ of cocommutative $K$-coalgebras.

\begin{thm}
\label{P3.5} Let $A$, $A'$ be commutative $K$-algebras. Then, there are canonical maps 
\begin{equation}
\tau(A,A'): \mathcal M_c(A,A')\longrightarrow \mathcal M_c((HC'_\bullet(A),\ast),(HC'_\bullet(A'),\ast))
\end{equation}
of $K$-coalgebras.
\end{thm}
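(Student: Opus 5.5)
The plan is to apply Theorem \ref{T3.2} to the universal cocommutative measuring and then use the universal property of the target. Set $C:=\mathcal M_c(A,A')$, which is cocommutative, and let $\Psi^c(A,A'):\mathcal M_c(A,A')\longrightarrow Hom_K(A,A')$ be the universal cocommutative measuring.

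Since $A$ and $A'$ are commutative, Theorem \ref{T3.2} applies to $\Phi=\Psi^c(A,A')$. It yields a measuring of (non-unital) algebras
\begin{equation*}
\Psi^c(A,A')^\ast:\mathcal M_c(A,A')\longrightarrow Hom_K(HC'_\bullet(A),HC'_\bullet(A')),\qquad x\mapsto HC'^{\Psi^c(A,A')}_\bullet(x),
\end{equation*}
from $(HC'_\bullet(A),\ast)$ to $(HC'_\bullet(A'),\ast)$. Its source coalgebra $\mathcal M_c(A,A')$ is cocommutative.

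Next, invoke the universal property of $\mathcal M_c((HC'_\bullet(A),\ast),(HC'_\bullet(A'),\ast))$. This is the universal cocommutative measuring coalgebra between these non-unital algebras; as the paper remarks, these constructions extend to algebras that are not necessarily unital. The universal property gives a unique coalgebra map
\begin{equation*}
\tau(A,A'):\mathcal M_c(A,A')\longrightarrow \mathcal M_c((HC'_\bullet(A),\ast),(HC'_\bullet(A'),\ast))
\end{equation*}
such that $\Psi^c(HC'_\bullet(A),HC'_\bullet(A'))\circ\tau(A,A')=\Psi^c(A,A')^\ast$. This map is the canonical one required by the statement.

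The only point needing care is that Sweedler's universal measuring coalgebra, and its cocommutative part, still exist in the non-unital setting, where only the multiplicativity condition of \eqref{ms1.1} is imposed. I would justify this along the standard lines. The cocommutative part of a coalgebra is the sum of all cocommutative subcoalgebras, which is again a cocommutative subcoalgebra. So it suffices to have the universal non-unital measuring coalgebra, and the usual construction provides it: take the largest subcoalgebra of the cofree coalgebra on $Hom_K(R,R')$ on which the multiplicativity condition holds. I treat this as given, in line with the paper's remark.
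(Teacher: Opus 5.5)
Your proposal is correct and is essentially the paper's proof: apply Theorem \ref{T3.2} to the universal cocommutative measuring $\Psi^c(A,A')$, whose source $\mathcal M_c(A,A')$ is cocommutative, and then invoke the universal property of $\mathcal M_c((HC'_\bullet(A),\ast),(HC'_\bullet(A'),\ast))$ to obtain the coalgebra map $\tau(A,A')$. Your extra justification that the universal (cocommutative) measuring coalgebra exists for non-unital algebras goes beyond the paper, which only asserts in passing that these constructions extend to the non-unital case.
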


\begin{proof}
For the commutative algebras $A$, $A'$, we  consider the   measuring 
\begin{equation}\label{320g}
\Psi^c(A,A'):
\mathcal M_c(A,A')\longrightarrow Hom_K(A,A')
\end{equation} that is universal among cocommutative measurings from $A$ to $A'$. Since $\mathcal M_c(A,A')$ is cocommutative, it follows from Theorem \ref{T3.2} that there is an induced measuring
\begin{equation}
\Psi^{c\ast}(A,A'): \mathcal M_c(A,A')\longrightarrow Hom_K(HC'_\bullet(A),HC'_\bullet(A'))
\end{equation} between corresponding cyclic homology algebras. The coalgebra map $\tau(A,A'): \mathcal M_c(A,A')\longrightarrow \mathcal M_c((HC'_\bullet(A),\ast),(HC'_\bullet(A'),\ast))$ now follows directly from the universal property of $\mathcal M_c((HC'_\bullet(A),\ast),(HC'_\bullet(A'),\ast))$. 
\end{proof}

\begin{Thm}
\label{T3.6} There is a $coCoalg_K$-enriched functor $cALG_K\longrightarrow c\widetilde{ALG}_K$  which is identity on objects and whose mapping on Hom objects is given by \begin{equation}\label{3.21tu}
\tau(A,A'): \mathcal M_c(A,A')\longrightarrow \mathcal M_c((HC'_\bullet(A),\ast),(HC'_\bullet(A'),\ast))
\end{equation} for commutative algebras $A$, $A'$. 
\end{Thm}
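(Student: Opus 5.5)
To prove Theorem \ref{T3.6}, I have to check that the maps $\tau(A,A')$ of Proposition \ref{P3.5} are compatible with the composition and unit morphisms of the two enriched categories. Recall how these structures are built. In $cALG_K$, composition
\[
\mathcal M_c(A',A'')\otimes \mathcal M_c(A,A')\longrightarrow \mathcal M_c(A,A'')
\]
is the unique coalgebra map classifying the composite measuring $y\otimes x\mapsto \Psi^c(A',A'')(y)\circ \Psi^c(A,A')(x)$. This is again a measuring, and the tensor product of cocommutative coalgebras is cocommutative. The unit $K\to \mathcal M_c(A,A)$ classifies the measuring $1\mapsto id_A$. The same recipe, applied to the non-unital algebras $(HC'_\bullet(A),\ast)$, defines composition and units in $c\widetilde{ALG}_K$.

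The key step is a compatibility of induced maps with composition. Let $\Phi:C\to Hom_K(A,A')$ and $\Phi':D\to Hom_K(A',A'')$ be cocommutative measurings, and let $\Phi'\circ\Phi:D\otimes C\to Hom_K(A,A'')$ be their composite. I claim that for $y\in D$ and $x\in C$,
\[
HC^{\Phi'\circ\Phi}_\bullet(y\otimes x)=HC^{\Phi'}_\bullet(y)\circ HC^{\Phi}_\bullet(x).
\]
I will check this already at the chain level on $C_n(A)$. The coproduct on $D\otimes C$ gives $(y\otimes x)_{(i)}=y_{(i)}\otimes x_{(i)}$. So the left side sends $(a_0,\dots,a_n)$ to $(y_{(1)}(x_{(1)}(a_0)),\dots,y_{(n+1)}(x_{(n+1)}(a_n)))$, and this is exactly the right side. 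Passing through the morphism of bicomplexes from Proposition \ref{P1.1} gives the identity on $HC_\bullet$, and therefore on $HC'_\bullet$. Similarly, the trivial measuring $1\mapsto id_A$ by $K$ induces the identity on $HC'_\bullet(A)$.

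The argument then runs as follows. Write $\zeta=\tau(A',A'')\otimes\tau(A,A')$. I must show that the two coalgebra maps
\[
\tau(A,A'')\circ comp \qquad\text{and}\qquad comp\circ\zeta
\]
from $\mathcal M_c(A',A'')\otimes \mathcal M_c(A,A')$ to $\mathcal M_c(HC'_\bullet(A),HC'_\bullet(A''))$ agree. By the uniqueness part of the universal property of $\mathcal M_c(HC'_\bullet(A),HC'_\bullet(A''))$, it is enough to show that they classify the same measuring, that is, that they agree after composing with the universal measuring $\Psi^c(HC'_\bullet(A),HC'_\bullet(A''))$.

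For the first map, the construction in Proposition \ref{P3.5} gives $\Psi^c\circ\tau(A,A'')=\Psi^{c\ast}(A,A'')$. So the first map classifies $y\otimes x\mapsto HC'_\bullet$ induced by $\Psi^c(A',A'')(y)\circ\Psi^c(A,A')(x)$. For the second map, the definition of composition in $c\widetilde{ALG}_K$ shows that it classifies $y\otimes x\mapsto \Psi^{c\ast}(A',A'')(y)\circ\Psi^{c\ast}(A,A')(x)$. The compatibility claim above says these two measurings coincide. The unit axiom is handled in the same way, using the identity case of the claim.

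I expect the main obstacle to be purely bookkeeping. One must make precise the enriched structure on $c\widetilde{ALG}_K$ for non-unital algebras, namely that the universal cocommutative measuring coalgebra exists and carries its composition as in the unital case. One must also verify that the multi-index Sweedler notation for $D\otimes C$ behaves as claimed. Cocommutativity is not needed for the claim itself, only to keep every coalgebra involved inside $coCoalg_K$.
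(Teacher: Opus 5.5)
Your proposal is correct and follows essentially the same route as the paper: you reduce the composition axiom, via uniqueness in the universal property of $\mathcal M_c(HC'_\bullet(A),HC'_\bullet(A''))$, to an equality of classified measurings, which you check at the chain level from $(y\otimes x)_{(i)}=y_{(i)}\otimes x_{(i)}$, and you handle units through the identity measuring by $K$. The one step you leave implicit is that $\Psi^{c\ast}(A,A'')\circ comp$ equals the map induced by the composite measuring; this holds because $comp$ is a coalgebra map, so induced maps are natural in the coalgebra, and it is exactly what the paper records in \eqref{cop3.24}.
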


\begin{proof}
We will first show that the morphisms in \eqref{3.21tu} are compatible with composition in the categories $cALG_K$ and $c\widetilde{ALG}_K$. In other words, for commutative algebras $A$, $A'$ and $A''$, we have to show that the following diagram is commutative
\begin{equation}\label{cd3.22}
\begin{CD}
\mathcal M_c(A,A') \otimes \mathcal M_c(A',A'') @>\circ >> \mathcal M_c(A,A'')\\
@V\tau(A,A')\otimes \tau (A',A'')VV @VV\tau(A,A'')V \\
\mathcal M_c(HC'_\bullet(A),HC'_\bullet(A')) \otimes \mathcal M_c(HC'_\bullet(A'),HC'_\bullet(A'')) @>\circ >> \mathcal M_c(HC'_\bullet(A),HC'_\bullet(A''))\\
\end{CD}
\end{equation} By definition, the horizontal morphisms in \eqref{cd3.22} are given by composition in the categories $ cALG_K$ and $c\widetilde{ALG}_K$, which are enriched over $coCoalg_K$. Hence, these are maps of cocommutative coalgebras. From Proposition \ref{P3.5}, it follows that the maps $\tau(A,A')$, $\tau(A',A'')$ and $\tau(A,A'')$ appearing in \eqref{cd3.22} are also morphisms in $coCoalg_K$. 

\smallskip In order to show that \eqref{cd3.22} commutes, we see from the  universal property of $\mathcal M_c(HC'_\bullet(A),HC'_\bullet(A''))$ that it suffices to show that the following two compositions are equal
\begin{equation}\label{equal322}
\begin{array}{lll}
\begin{CD}
\mathcal M_c(A,A') \otimes \mathcal M_c(A',A'') \\
@V\circ VV \\
 \mathcal M_c(A,A'') \\
@V\tau(A,A'')VV\\
 \mathcal M_c(HC'_\bullet(A),HC'_\bullet(A''))\\ 
@V\Psi^c(HC'_\bullet(A),HC'_\bullet(A''))VV \\
Hom_K(HC'_\bullet(A),HC'_\bullet(A''))\\
\end{CD} & \qquad\qquad &
\begin{CD}
\mathcal M_c(A,A') \otimes \mathcal M_c(A',A'')  \\
@VV\tau(A,A')\otimes \tau (A',A'')V \\
\mathcal M_c(HC'_\bullet(A),HC'_\bullet(A')) \otimes \mathcal M_c(HC'_\bullet(A'),HC'_\bullet(A'')) \\
@VV\circ V\\
\mathcal M_c(HC'_\bullet(A),HC'_\bullet(A'')) \\
@VV\Psi^c(HC'_\bullet(A),HC'_\bullet(A''))V \\
Hom_K(HC'_\bullet(A),HC'_\bullet(A''))\\
\end{CD} \\
\end{array}
\end{equation}
Here $ \Psi^c(HC'_\bullet(A),HC'_\bullet(A'')): \mathcal M_c(HC'_\bullet(A),HC'_\bullet(A''))\longrightarrow Hom_K(HC'_\bullet(A),HC'_\bullet(A''))$  is the universal cocommutative measuring from $(HC'_\bullet(A),\ast)$ to $(HC'_\bullet(A''),\ast)$. For the sake of convenience, let us denote by $\phi_1$ the left vertical composition and by $\phi_2$ the right vertical composition in \eqref{equal322}.

\smallskip
We now consider  $x\in \mathcal M_c(A,A')$ and $y\in  \mathcal M_c(A',A'')$, and their composition $y\circ x\in \mathcal M_c(A,A'')$.  Since $\circ:\mathcal M_c(A,A') \otimes \mathcal M_c(A',A'') \longrightarrow \mathcal M_c(A,A'')$ is a morphism of coalgebras, we note that
\begin{equation}\label{cop3.24}
(y\circ x)_{(1)}\otimes ... \otimes (y\circ x)_{(p)}=\Delta^p(y\circ x)=(y_{(1)}\circ x_{(1)})\otimes ... \otimes (y_{(p)}\circ x_{(p)})
\end{equation} for any $p\geq 0$. Accordingly, for any $(a_0,...,a_{p-1})\in \bar{C}_{p-1}(A)$, we have
\begin{equation}\label{325ar}
\begin{array}{ll}
\phi_1(x\otimes y)(a_0,...,a_{p-1})=((y\circ x)_{(1)}(a_0),..., (y\circ x)_{(p)}(a_{p-1)})&=((y_{(1)}\circ x_{(1)})(a_0),... ,(y_{(p)}\circ x_{(p)})(a_{p-1}))\\
&= (y_{(1)}(x_{(1)}(a_0)),...,y_{(p)}(x_{(p)}(a_{p-1})))\\
\end{array}
\end{equation} To describe the right vertical composition $\phi_2$ in \eqref{equal322}, we note that the following diagram commutes
\begin{equation*}\small
\xymatrix{
\mathcal M_c(A,A') \otimes \mathcal M_c(A',A'')  \ar[d]^{\Psi^c(HC'_\bullet(A),HC'_\bullet(A'))\otimes \Psi^c(HC'_\bullet(A'),HC'_\bullet(A''))}\ar[r]^{\tau(A,A')\otimes \tau (A',A'')\quad\quad\quad\quad\quad} &\mathcal M_c(HC'_\bullet(A),HC'_\bullet(A')) \otimes \mathcal M_c(HC'_\bullet(A'),HC'_\bullet(A''))  \ar[r]^{\qquad\qquad\quad\circ} &\mathcal M_c(HC'_\bullet(A),HC'_\bullet(A'')) \ar[d]_{\Psi^c(HC'_\bullet(A),HC'_\bullet(A''))}\\
Hom_K(HC'_\bullet(A),HC'_\bullet(A')) \otimes Hom_K(HC'_\bullet(A'),HC'_\bullet(A'')) \ar[rr]^{\qquad\circ} && Hom_K(HC'_\bullet(A),HC'_\bullet(A'')) \\
}
\end{equation*} It now follows that
\begin{equation}\label{327ar}
\phi_2(x\otimes y)(a_0,...,a_{p-1})= y(x(a_0,...,a_{p-1}))= (y_{(1)}(x_{(1)}(a_0)),...,y_{(p)}(x_{(p)}(a_{p-1})))
\end{equation} for $(a_0,...,a_{p-1})\in \bar{C}_{p-1}(A)$. From \eqref{325ar} and \eqref{327ar}, we have
$ \phi_1(x\otimes y)(a_0,...,a_{p-1})=\phi_2(x\otimes y)(a_0,...,a_{p-1})$ for $(a_0,...,a_{p-1})\in \bar{C}_{p-1}(A)$. Since $Tot(\overline{\mathscr BC}_{\bullet\bullet}(A))_{p-1} $ decomposes as  $Tot(\overline{\mathscr BC}_{\bullet\bullet}(A))_{p-1} =\bar{C}_{p-1}(A)\oplus \bar{C}_{p-3}(A)\oplus ...$, repeating the same reasoning for each direct summand shows that
$ \phi_1(x\otimes y)(\tilde a)=\phi_2(x\otimes y)(\tilde a)$ for any $\tilde a=(\tilde a_{p-1},\tilde a_{p-3},...)\in Tot(\overline{\mathscr BC}_{\bullet\bullet}(A))_{p-1}$. Hence, the diagram \eqref{cd3.22} commutes. 

\smallskip
To show  that we have a $coCoalg_K$-enriched functor $cALG_K\longrightarrow c\widetilde{ALG}_K$, it remains to check that the mapping on Hom objects in \eqref{3.21tu} is compatible with unit maps. Since $\Delta^p(1)=1\otimes ... \otimes 1$ ($(p+1)$-times) in the coalgebra $K$, it is clear from the definition of the action in \eqref{2.4mao} that the composition
\begin{equation}
K \longrightarrow \mathcal M_c(A,A) \xrightarrow{\qquad\tau(A,A)\qquad} \mathcal M_c(HC'(A),HC'(A))\xrightarrow{\qquad\Psi^c(HC'_\bullet(A),HC'_\bullet(A))\qquad}Hom_K(HC'_\bullet(A),HC'_\bullet(A))
\end{equation} is identical to the obvious map $K\longrightarrow Hom_K(HC'_\bullet(A),HC'_\bullet(A))$ given by scalar multiplication. Since the scalar multiplication is also a measuring,  the latter map must factor through the universal cocommutative measuring coalgebra $ \mathcal M_c(HC'(A),HC'(A))$. From the universal property of the measuring $\Psi^c(HC'_\bullet(A),HC'_\bullet(A)):  \mathcal M_c(HC'(A),HC'(A))\longrightarrow Hom_K(HC'_\bullet(A),HC'_\bullet(A))$, the result follows.
\end{proof} 

\section{Measurings and $\lambda$-decomposition on Hochschild and cyclic homology}

For a commutative algebra $A$, we recall that we have the $\lambda$-decomposition 
 of its Hochschild homology groups (see, for instance, \cite[$\S$ 4.5]{Lod})
\begin{equation}\label{4.1spl}
HH_n(A)=HH_n^{(1)}(A)\oplus ... \oplus HH_n^{(n)}(A)
\end{equation} for $n\geq 1$. Further, we know that the direct summands in \eqref{4.1spl} are compatible with the shuffle product  described in \eqref{shf3.3}.  In other words, the shuffle product restricts to give
\begin{equation}\label{4.2shf}
\_\_\times \_\_: HH_p^{(i)}(A)\times HH_q^{(j)}(A)\longrightarrow HH_{p+q}^{(i+j)}(A)
\end{equation}
As in Section 3, we let $C$ be a cocommutative coalgebra and let $\Phi: C\longrightarrow Hom_K(A,A')$ be  a measuring between commutative algebras $A$, $A'$. 
In \cite{BK1}, we have already shown that the maps $\{C_n^\Phi(x): C_n(A)\longrightarrow C_n(A')\}_{n\geq 0}$   induce morphisms $
HH^\Phi_\bullet(x): HH_\bullet(A)\longrightarrow HH_\bullet(A')$ on Hochschild homology.  In this section, we will show that these morphisms are well behaved with respect to the decomposition in \eqref{4.1spl} and the shuffle product in \eqref{4.2shf}. For this, we will first need some intermediate results on commutative  Hopf algebras.

\smallskip
Let $ \mathcal H$ be a commutative Hopf algebra over $K$, equipped with coproduct $\Delta_{\mathcal H}:\mathcal H\longrightarrow \mathcal H\otimes 
\mathcal H$ and counit $\epsilon_{\mathcal H}:\mathcal H\longrightarrow K$. If $f$, $g$ are $K$-linear endomorphisms of $\mathcal H$, their convolution $f\odot g$ is given by
\begin{equation}\label{4.3conv}
f\odot g:=\mu_{\mathcal H}(f\otimes g)\Delta_{\mathcal H}: \mathcal H\xrightarrow{\Delta_{\mathcal H}}\mathcal H\otimes 
\mathcal H\xrightarrow{f\otimes g}\mathcal H\otimes \mathcal H\xrightarrow{\mu_{\mathcal H}}\mathcal H
\end{equation} where $\mu_{\mathcal H}$ denotes the multiplication on $\mathcal H$. The convolution in \eqref{4.3conv} makes the collection 
$End_K(\mathcal H)$ of $K$-linear endomorphisms of $\mathcal H$ into an associative $K$-algebra (see \cite[$\S$ 4.5.2]{Lod}). The unit for the convolution product $\odot$ of endomorphisms in 
\eqref{4.3conv} is given by the composition $\iota_{\mathcal H}\epsilon_{\mathcal H}:\mathcal H\xrightarrow{\epsilon_{\mathcal H}}K
\xrightarrow{\iota_{\mathcal H}}\mathcal H$, where $\iota_{\mathcal H}$ is the unit map of the Hopf algebra $\mathcal H$.

\begin{lem}\label{Lm4.1}
Let $\mathcal H$, $\mathcal H'$ be commutative Hopf algebras. Let $C$ be a cocommutative coalgebra and let $\Psi:C\longrightarrow Hom_K(\mathcal H,\mathcal H')$ be a measuring from $\mathcal H$ to $\mathcal H'$ at the level of algebras.  Suppose we have $f$, $g\in End_K(\mathcal H)$ and $f'$, $g'\in End_K(\mathcal H')$ such that the following diagrams commute  for each $x\in C$. 
\begin{equation}\label{44cda}
\begin{array}{ccc}
\begin{CD}
\mathcal H @>f>> \mathcal H\\
@V\Psi(x)VV @VV\Psi(x)V\\
\mathcal H' @>f'>> \mathcal H'\\
\end{CD} &\qquad\qquad & \begin{CD}
\mathcal H @>g>> \mathcal H\\
@V\Psi(x)VV @VV\Psi(x)V\\
\mathcal H' @>g'>> \mathcal H'\\
\end{CD}\\
\end{array}
\end{equation}
Then for each $x\in C$, the following diagram also commutes.
\begin{equation}
\begin{CD}
\mathcal H @>f\odot g>> \mathcal H\\
@V\Psi(x)VV @VV\Psi(x)V\\
\mathcal H' @>f'\odot g'>> \mathcal H'\\
\end{CD}
\end{equation}
\end{lem}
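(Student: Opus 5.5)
The plan is a direct element-wise computation in Sweedler notation: apply $\Psi(x)$ to $(f\odot g)(h)$ for $h\in\mathcal H$ and move $\Psi(x)$ through each of the three maps in the factorization \eqref{4.3conv}, namely $\Delta_{\mathcal H}$, then $f\otimes g$, then $\mu_{\mathcal H}$. We handle them from the outside in, writing $\Delta_{\mathcal H}(h)=h_{(1)}\otimes h_{(2)}$.

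First, the multiplication. Since $\Psi$ is a measuring of algebras,
\begin{equation*}
\Psi(x)\bigl(f(h_{(1)})\,g(h_{(2)})\bigr)=\Psi(x_{(1)})(f(h_{(1)}))\,\Psi(x_{(2)})(g(h_{(2)})).
\end{equation*}
Second, the maps $f$ and $g$. The squares \eqref{44cda} hold for every element of $C$, in particular for $x_{(1)}$ and $x_{(2)}$. So the right-hand side above becomes
\begin{equation*}
f'\bigl(\Psi(x_{(1)})(h_{(1)})\bigr)\,g'\bigl(\Psi(x_{(2)})(h_{(2)})\bigr)=\mu_{\mathcal H'}(f'\otimes g')\bigl(\Psi(x_{(1)})\otimes\Psi(x_{(2)})\bigr)\Delta_{\mathcal H}(h).
\end{equation*}
The unit conventions play no role here, since only the coproduct clause of \eqref{ms1.1} is used.

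Third, the coproduct, which is the main obstacle. To reach $(f'\odot g')(\Psi(x)(h))=\mu_{\mathcal H'}(f'\otimes g')\Delta_{\mathcal H'}(\Psi(x)(h))$, the last step needs the coproduct compatibility
\begin{equation*}
\Delta_{\mathcal H'}\circ\Psi(x)=\bigl(\Psi(x_{(1)})\otimes\Psi(x_{(2)})\bigr)\circ\Delta_{\mathcal H}\qquad\forall\, x\in C.
\end{equation*}
This is a comeasuring condition, and it does not follow from $\Psi$ being a measuring of algebras alone. I would therefore verify it, or state it explicitly as part of the hypotheses, as the key input. Cocommutativity of $C$ ensures the order of $x_{(1)},x_{(2)}$ is immaterial.

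In the intended application this condition is checked directly. There $\mathcal H$ is the tensor (shuffle) Hopf algebra on $A$, carrying the deconcatenation coproduct, and $\Psi(x)$ acts by $(a_1,\dots,a_n)\mapsto(x_{(1)}(a_1),\dots,x_{(n)}(a_n))$. Deconcatenating $(x_{(1)}(a_1),\dots,x_{(n)}(a_n))$ at position $i$ gives exactly $\Psi(x_{(1)})(a_1,\dots,a_i)\otimes\Psi(x_{(2)})(a_{i+1},\dots,a_n)$, by coassociativity of $C$. With this identity the three steps chain together and give the commutative square for $f\odot g$.

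Finally, I would record the consequence to be used afterwards. By induction, the same argument applies to iterated convolutions of maps satisfying \eqref{44cda}, and to the convolution unit $\iota\epsilon$ when $\Psi$ is unital. Hence it applies to any polynomial in $\odot$ of such maps, for instance the Eulerian idempotents built from $\mathrm{id}-\iota\epsilon$ via the logarithm, which is what the $\lambda$-decomposition requires.
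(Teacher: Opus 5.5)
Your computation is the paper's computation step for step, and your diagnosis of the third step is correct. The paper passes from $f'(\Psi(x_{(1)})(h_{(1)}))\,g'(\Psi(x_{(2)})(h_{(2)}))$ to $f'(\Psi(x)(h)_{(1)})\,g'(\Psi(x)(h)_{(2)})$ citing only ``since $\Psi$ is a measuring''. That step is exactly the comeasuring identity $\Delta_{\mathcal H'}\circ\Psi(x)=\bigl(\Psi(x_{(1)})\otimes\Psi(x_{(2)})\bigr)\circ\Delta_{\mathcal H}$, and the measuring axioms do not supply it.

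In fact the lemma is false as stated. So of your two options (``verify it, or state it explicitly as part of the hypotheses'') only the second is available in this generality. Take $C=K$, and $\mathcal H=K[t]$, $\mathcal H'=K[s]$ with $t$, $s$ primitive. Let $\Psi(1)$ be the algebra map $t\mapsto s^2$, and let $f,g,f',g'$ all be identities. The hypotheses hold trivially. Yet $\Psi(1)\bigl((f\odot g)(t)\bigr)=\Psi(1)(2t)=2s^2$, whereas $(f'\odot g')(s^2)=\mu_{\mathcal H'}\Delta_{\mathcal H'}(s^2)=4s^2$.

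With the comeasuring condition added as a hypothesis, your three steps form a complete proof. Your check that $\mathbb T(\Phi)$ satisfies it is right. For the extreme cuts $i=0$ and $i=n$ you also need $\mathbb T(\Phi)(x)(1)=\epsilon_C(x)$ and the counit axiom of $C$, not only coassociativity. So Lemma \ref{Lm43} and the $\lambda$-decomposition results built on it survive.

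One small correction to your closing remark concerns the convolution unit $\iota\epsilon$. Intertwining it requires, besides $\Psi(x)(1)=\epsilon_C(x)1$, the counit counterpart of the comeasuring condition, $\epsilon_{\mathcal H'}\circ\Psi(x)=\epsilon_C(x)\epsilon_{\mathcal H}$. The unital algebra map $t\mapsto s+1$ fails it. For $\mathbb T(\Phi)$ it holds because each $\mathbb T(\Phi)(x)$ preserves degree. This is also what justifies the paper's claim that $f=Id-\iota_{\mathcal H}\epsilon_{\mathcal H}$ is intertwined with $f'$.
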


\begin{proof}
Let $h\in \mathcal H$. We write $\Delta_{\mathcal H}(h):=h_{(1)}\otimes h_{(2)}$ in Sweedler notation, by suppressing summation signs. For $x\in C$,  we verify that
\begin{equation*}
\begin{array}{lr}
\Psi(x)((f\odot g)(h))&\\
=\Psi(x)(f(h_{(1)})g(h_{(2)}))&\\
=\Psi(x_{(1)})(f(h_{(1)}))\Psi(x_{(2)})(g(h_{(2)}))&\mbox{$\qquad$(since $\Psi:C\longrightarrow Hom_K(\mathcal H,\mathcal H')$ is a measuring)}\\
=f'(\Psi(x_{(1)})(h_{(1)}))g'(\Psi(x_{(2)})(h_{(2)})) & \mbox{(applying \eqref{44cda})}\\
=f'(\Psi(x)(h)_{(1)})g'(\Psi(x)(h)_{(2)})&\mbox{$\qquad$(since $\Psi:C\longrightarrow Hom_K(\mathcal H,\mathcal H')$ is a measuring)}\\
=(f'\odot g')(\Psi(x)(h))&\\
\end{array}
\end{equation*}
\end{proof} 

Suppose that $\mathcal H$ is also a graded Hopf algebra, i.e., $\mathcal H=\oplus_{n\geq 0}\mathcal H_n$. Let $\mathcal H_0=K$ and let $f\in End_K(\mathcal H)$ be a $K$-linear map of degree $0$ such that $f(1)=0$. Then, $f^{\odot k}|_{\mathcal H_n}=0$ whenever $k>n$ and we note that the following is a well defined $K$-linear endomorphism of $\mathcal H$ (see \cite[$\S$ 4.5.2.1]{Lod}):
\begin{equation}\label{idem46p}
e^{(1)}(f):=\log (\iota_{\mathcal H}\epsilon_{\mathcal H}+f)=f- \frac{f^{\odot 2}}{2} + ... + (-1)^{i+1}\frac{f^{\odot i}}{i}+...
\end{equation} of degree $0$. Accordingly, one can set
\begin{equation}\label{idem4p}
e^{(i)}(f):=\frac{(e^{(1)}(f))^{\odot i}}{i!} \qquad e^{(i)}_n(f):=e^{(i)}(f)|_{\mathcal H_n}
\end{equation} Since $f$ is of degree $0$, we note that $e^{(i)}_n(f)$ becomes an endmorphism of $\mathcal H_n$.

\smallskip
In particular, let $\mathcal H:=\mathbb T(A)$ be the graded cotensor Hopf algebra of $A$ (see \cite[$\S$ 4.5.4]{Lod}). In other words, $\mathcal H_0=K$ and $\mathcal H_n:=A^{\otimes n}$ for $n\geq 1$. The coproduct $\Delta_{\mathcal H}$ on $\mathcal H=\mathbb T(A)$ is determined by the ``cut product''
\begin{equation}
\Delta_{\mathcal H}(a_1,...,a_n)=\sum_{i=0}^n (a_1,...,a_i)\otimes (a_{i+1},...,a_n) \qquad (a_1,...,a_n)\in \mathcal H_n=A^{\otimes n}
\end{equation} For $n\geq 1$, there is a left action of $S_n$ on $A^{\otimes n}$ given by setting $\tau\cdot (a_1,...,a_n)=(a_{\tau^{-1}(1)},...,a_{\tau^{-1}(n)})$ for $\tau\in S_n$ and $ (a_1,...,a_n)\in A^{\otimes n}$. The multiplication $\mu_{\mathcal H}$ on $\mathcal H=\mathbb T(A)$ is now determined  by the shuffle product
\begin{equation}\label{4.8sh}
\mu_{\mathcal H}((a_1,...,a_p)\otimes (b_{1},...,b_{q})):=\sum_{\sigma\in S_{p,q}}sgn(\sigma)\sigma\cdot (a_1,...,a_p,b_1,...,b_q)\qquad (a_1,...,a_p)\in 
A^{\otimes p}, \textrm{ }(b_1,...,b_q)\in A^{\otimes q}
\end{equation} where $S_{p,q}\subseteq S_{p+q}$ denotes the collection of $(p,q)$-shuffles as in Section 3. 

\begin{lem}\label{Lm42}
Let $C$ be a cocommutative coalgebra and let $\Phi:C\longrightarrow Hom_K(A,A')$ be a measuring between commutative algebras.  Let $\mathcal H:=\mathbb T(A)$ and $\mathcal H':=\mathbb T(A')$ be the respective graded cotensor algebras. Then, there is an induced measuring 
$\mathbb T(\Phi):C\longrightarrow Hom_K(\mathcal H,\mathcal H')$ determined by setting
\begin{equation}\label{4.10ge}
\mathbb T(\Phi)(x)(a_1,...,a_n):=(\Phi(x_{(1)}(a_1),...,\Phi(x_{(n)})(a_n))=(x_{(1)}(a_1),...,x_{(n)}(a_n))\qquad \mathbb T(\Phi)(x)(1):=\epsilon_C(x)
\end{equation} for $x\in C$ and $(a_1,...,a_n)\in A^{\otimes n}$, $n\geq 1$. 
\end{lem}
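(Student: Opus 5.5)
We have to check that $\mathbb T(\Phi)$ is a measuring of algebras from $(\mathbb T(A),\mu_{\mathcal H})$ to $(\mathbb T(A'),\mu_{\mathcal H'})$, that is, for homogeneous elements $u\in A^{\otimes p}$ and $v\in A^{\otimes q}$,
\begin{equation*}
\mathbb T(\Phi)(x)(\mu_{\mathcal H}(u\otimes v))=\mu_{\mathcal H'}(\mathbb T(\Phi)(x_{(1)})(u)\otimes \mathbb T(\Phi)(x_{(2)})(v)),
\end{equation*}
together with the unit condition $\mathbb T(\Phi)(x)(1)=\epsilon_C(x)1$.

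The map $\mathbb T(\Phi)(x)$ is well defined on each $A^{\otimes n}$, since $(a_1,\dots,a_n)\mapsto (x_{(1)}(a_1),\dots,x_{(n)}(a_n))$ is multilinear. The unit condition holds by definition in \eqref{4.10ge}. If $p=0$ or $q=0$, the multiplicativity identity reduces to counitality: for instance, $\mathbb T(\Phi)(x_{(1)})(1)\cdot \mathbb T(\Phi)(x_{(2)})(v)=\epsilon_C(x_{(1)})\,x_{(2)}(v)=x(v)$. So we may assume $p,q\geq 1$.

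For $p,q\geq 1$, take $u=(a_1,\dots,a_p)$ and $v=(b_1,\dots,b_q)$. Write $(c_1,\dots,c_{p+q})=(a_1,\dots,a_p,b_1,\dots,b_q)$. By \eqref{4.8sh}, the left side is
\begin{equation*}
\sum_{\sigma\in S_{p,q}}sgn(\sigma)\,(x_{(1)}(c_{\sigma^{-1}(1)}),\dots,x_{(p+q)}(c_{\sigma^{-1}(p+q)})).
\end{equation*}
The right side is
\begin{equation*}
\sum_{\sigma\in S_{p,q}}sgn(\sigma)\,\sigma\cdot(x_{(1)}(a_1),\dots,x_{(p)}(a_p),x_{(p+1)}(b_1),\dots,x_{(p+q)}(b_q))
=\sum_{\sigma}sgn(\sigma)\,(x_{(\sigma^{-1}(1))}(c_{\sigma^{-1}(1)}),\dots,x_{(\sigma^{-1}(p+q))}(c_{\sigma^{-1}(p+q)})).
\end{equation*}
Here we used coassociativity to write $\Delta^{p-1}(x_{(1)})\otimes\Delta^{q-1}(x_{(2)})=\Delta^{p+q-1}(x)$.

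The key step, and the only nontrivial one, is the same as in Proposition \ref{P2.4aj} and Proposition \ref{P2.5gx}. Since $C$ is cocommutative, the iterated coproduct $x_{(1)}\otimes\cdots\otimes x_{(p+q)}$ is invariant under any permutation of the tensor factors. So for each fixed $\sigma$ we may relabel $x_{(\sigma^{-1}(i))}$ as $x_{(i)}$. This identifies the two sums term by term and proves the measuring identity. The main care needed is keeping the Sweedler indices straight under the permutation action; no step is a genuine obstacle.
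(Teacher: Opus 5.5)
Your proposal is correct and follows essentially the same route as the paper, which expands the shuffle product and uses cocommutativity to get $\mathbb T(\Phi)(x)(\tau\cdot(a_1,\dots,a_n))=\tau\cdot(x_{(1)}(a_1),\dots,x_{(n)}(a_n))$; this is exactly your relabelling of Sweedler indices. You also treat the unit condition and the degree-zero cases $p=0$ or $q=0$ explicitly, which the paper leaves implicit.
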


\begin{proof}
We let $\mu_{\mathcal H}$ and $\mu_{\mathcal H'}$ denote respectively the multiplications on the graded cotensor algebras $\mathbb T(A)$ and 
$\mathbb T(A')$. Because $C$ is cocommutative,  we note that 
\begin{equation}\label{4.11x}
\Phi(x)(\tau\cdot (a_1,...,a_n))= \tau \cdot (x_{(1)}(a_1),...,x_{(n)}(a_n))\qquad x\in C,\tau\in S_n, (a_1,...,a_n)\in A^{\otimes n}
\end{equation} We  consider $(a_1,...,a_p)\in \mathcal H_p=A^{\otimes p}$ and $(b_1,...,b_q)\in \mathcal H_q=A^{\otimes q}$.  
For $x\in C$, we now have
\begin{equation*}
\begin{array}{ll}
\mathbb T(\Phi)(x)(\mu_{\mathcal H}((a_1,...,a_p)\otimes (b_{1},...,b_{q})))&
=\mathbb T(\Phi)(x)\left(\sum_{\sigma\in S_{p,q}}sgn(\sigma)\sigma\cdot (a_1,...,a_p,b_1,...,b_q)\right)\\
&= \sum_{\sigma\in S_{p,q}}sgn(\sigma)\sigma\cdot (x_{(1)}(a_1),...,x_{(p)}(a_p),x_{(p+1)}(b_1),...,x_{(p+q)}(b_q))\\
&=\mu_{\mathcal H'}(\mathbb T(\Phi)(x_{(1)})(a_1,...,a_p)\otimes \mathbb T(\Phi)(x_{(2)})(b_1,...,b_q) )\\
\end{array}
\end{equation*} This proves the result.
\end{proof}

By definition, the counit $\epsilon_{\mathcal H}:\mathcal H\longrightarrow K$ on $\mathcal H=\mathbb T(A)$ is determined by the identity on $\mathcal H_0=K$ and $\epsilon_{\mathcal H}|_{\mathcal H_n}=0$ for $n>0$. The unit on $\mathcal H$ is given by $\iota_{\mathcal H}:K\xrightarrow{id}  \mathcal H_0=K$.
Then $f:=Id - \iota_{\mathcal H}\epsilon_{\mathcal H}\in End_K(\mathcal H)$ satisfies $f(1)=0$ and we set
\begin{equation}\label{lamb1}
e^{(i)}:=e^{(i)}(Id- \iota_{\mathcal H}\epsilon_{\mathcal H}) \qquad e_n^{(i)}:=e^{(i)}(Id- \iota_{\mathcal H}\epsilon_{\mathcal H})|_{\mathcal H_n}
\end{equation} in the sense of \eqref{idem4p}. We know (see for instance, \cite[$\S$ 4.5.3]{Lod}) that $e_n^{(i)}=0$ whenever $i>n$ and that for each $n$, the collection $\{e_n^{(i)}\}_{1\leq i\leq n}$ is a set of mutually orthogonal idempotents whose sum is the identity. Similarly for $\mathcal H'=\mathbb T(A')$, we can consider $f':=Id - \iota_{\mathcal H'}\epsilon_{\mathcal H'}\in End_K(\mathcal H')$ where $\iota_{\mathcal H'}$ and $\epsilon_{\mathcal H'}$ are the unit and counit on $\mathcal H'$ respectively. Accordingly, we set $e'^{(i)}:=e'^{(i)}(Id- \iota_{\mathcal H'}\epsilon_{\mathcal H'})$ and $ e'^{(i)}_n:=e'^{(i)}(Id- \iota_{\mathcal H'}\epsilon_{\mathcal H'})|_{\mathcal H'_n}$.

\begin{lem}\label{Lm43}
Let $C$ be a cocommutative coalgebra and let $\Phi:C\longrightarrow Hom_K(A,A')$ be a measuring between commutative algebras. Let $\mathcal H:=\mathbb T(A)$ and $\mathcal H':=\mathbb T(A')$ be the respective graded cotensor algebras. Then, the following diagram commutes
\begin{equation}\label{413cd}
\begin{CD}
C_n(A)=A\otimes A^{\otimes n}=A\otimes \mathcal H_n @>A\otimes e_n^{(i)}>>  A\otimes \mathcal H_n=A\otimes A^{\otimes n}=C_n(A)\\
@VC_n^\Phi(x)VV @VVC_n^\Phi(x)V\\
C_n(A')=A'\otimes A'^{\otimes n}=A'\otimes \mathcal H'_n @>A'\otimes e'^{(i)}_n>>  A'\otimes \mathcal H'_n=A'\otimes A'^{\otimes n}=C_n(A')\\
\end{CD}
\end{equation}
for each $i,n\geq 1$ and $x\in C$.
\end{lem}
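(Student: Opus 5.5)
The idea is to reduce the statement to Lemma \ref{Lm4.1} applied to the measuring $\mathbb T(\Phi):C\longrightarrow Hom_K(\mathcal H,\mathcal H')$ of Lemma \ref{Lm42}, and then to tensor with the zeroth factor using cocommutativity.

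\textbf{Step 1: the basic endomorphism $f=Id-\iota_{\mathcal H}\epsilon_{\mathcal H}$.} For each $x\in C$ we need
\[
\mathbb T(\Phi)(x)\circ(Id-\iota_{\mathcal H}\epsilon_{\mathcal H})=(Id-\iota_{\mathcal H'}\epsilon_{\mathcal H'})\circ \mathbb T(\Phi)(x).
\]
Since $\mathbb T(\Phi)(x)$ preserves degree, both sides agree with $\mathbb T(\Phi)(x)$ in degrees $n\geq 1$. In degree $0$ both sides vanish, because $\mathbb T(\Phi)(x)(1)=\epsilon_C(x)$ lies in $\mathcal H'_0$.

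\textbf{Step 2: convolution powers.} Iterating Lemma \ref{Lm4.1} gives $\mathbb T(\Phi)(x)\circ f^{\odot k}=f'^{\odot k}\circ\mathbb T(\Phi)(x)$ for all $k\geq 1$. The case $k=0$, i.e. the unit $\iota\epsilon$, holds by the same degree-$0$ computation.

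For $e^{(1)}(f)$: restricted to $\mathcal H_n$, the series $\log(\iota\epsilon+f)$ is a finite linear combination of the $f^{\odot k}$ with $k\leq n$. Since $\mathbb T(\Phi)(x)$ maps $\mathcal H_n$ to $\mathcal H'_n$, compatibility holds degreewise and hence for $e^{(1)}$. Applying Lemma \ref{Lm4.1} again to powers of $e^{(1)}$, and dividing by $i!$, gives
\[
\mathbb T(\Phi)(x)\circ e^{(i)}_n=e'^{(i)}_n\circ\mathbb T(\Phi)(x)\quad\text{on }\mathcal H_n .
\]
Lemma \ref{Lm4.1} is stated for endomorphisms satisfying the square for every $x$, and that is exactly what each inductive step provides.

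\textbf{Step 3: inserting the zeroth factor.} This is where care is needed, since $C_n^\Phi(x)$ is not literally $\Phi(x)\otimes\mathbb T(\Phi)(x)$. Instead, by coassociativity,
\[
C_n^\Phi(x)(a_0\otimes h)=x_{(1)}(a_0)\otimes \mathbb T(\Phi)(x_{(2)})(h)
\]
for $h\in\mathcal H_n$. So $C_n^\Phi(x)=\sum(\Phi(x_{(1)})\otimes\mathbb T(\Phi)(x_{(2)}))$.

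Then
\[
C_n^\Phi(x)\,(A\otimes e_n^{(i)})(a_0\otimes h)=x_{(1)}(a_0)\otimes \mathbb T(\Phi)(x_{(2)})(e^{(i)}_n h)=x_{(1)}(a_0)\otimes e'^{(i)}_n\big(\mathbb T(\Phi)(x_{(2)})(h)\big),
\]
using Step 2 applied to the element $x_{(2)}$. The right-hand side equals $(A'\otimes e'^{(i)}_n)\,C_n^\Phi(x)(a_0\otimes h)$, which is the commutativity of \eqref{413cd}.

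The main point to verify carefully is Step 2: Lemma \ref{Lm4.1} must apply at each stage with the same measuring, and the infinite logarithm series must be handled by restricting to a fixed degree. Both are routine given that $\mathbb T(\Phi)(x)$ is degree-preserving.
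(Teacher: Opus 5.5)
Your proof is correct and follows essentially the same route as the paper: check that $\mathbb T(\Phi)(x)$ commutes with $f=Id-\iota_{\mathcal H}\epsilon_{\mathcal H}$, propagate this to $e^{(i)}_n$ via Lemma \ref{Lm4.1} (your degreewise handling of the logarithm series is more careful than the paper's), and split off the zeroth factor as $x_{(1)}(a_0)\otimes \mathbb T(\Phi)(x_{(2)})(a_1,\dots,a_n)$. One caveat, inherited from the paper: the proof of Lemma \ref{Lm4.1} also needs $\Delta_{\mathcal H'}(\Psi(x)(h))=\Psi(x_{(1)})(h_{(1)})\otimes\Psi(x_{(2)})(h_{(2)})$, which is not part of the measuring condition, so when you invoke that lemma you should note that this identity holds for $\mathbb T(\Phi)$ by a direct check against the deconcatenation coproduct.
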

\begin{proof}
Applying Lemma \ref{Lm42}, we see that the measuring  $\Phi:C\longrightarrow Hom_K(A,A')$ induces $\mathbb T(\Phi):C\longrightarrow Hom_K(\mathcal H,\mathcal H')$, which is also a coalgebra measuring. For $f=Id - \iota_{\mathcal H}\epsilon_{\mathcal H}\in End_K(\mathcal H)$ and $f'=Id - \iota_{\mathcal H'}\epsilon_{\mathcal H'}\in End_K(\mathcal H')$, it is immediate that $f'\circ  \mathbb T(\Phi)(x)=\mathbb T(\Phi)(x)\circ f$ for each $x\in C$. Since $f$, $f'$ as well as the maps $\{\Phi(x)\}_{x\in C}$ are all of degree $0$, it now follows from Lemma \ref{Lm4.1} and the definitions in \eqref{idem46p} and \eqref{idem4p} that $e'^{(i)}_n\circ \mathbb T(\Phi)(x)= \mathbb T(\Phi)(x)\circ e^{(i)}_n$ for each $x\in C$.

\smallskip
For $x\in C$ and $(a_0,a_1,...,a_n)\in A\otimes A^{\otimes n}$, we now see that
\begin{equation}
\begin{array}{ll}
((A\otimes e'^{(i)}_n)\circ C_n^\Phi(x))(a_0,a_1,...,a_n)&=\Phi(x_{(1)})(a_0)\otimes ((e'^{(i)}_n\circ \mathbb T(\Phi)(x_{(2)}))(a_1,...,a_n))\\
&=\Phi(x_{(1)})(a_0)\otimes (\mathbb T(\Phi)(x_{(2)})\circ e^{(i)}_n)(a_1,...,a_n))\\
&=(C_n^\Phi(x)\circ (A\otimes e_n^{(i)}))(a_0,a_1,...,a_n)\\
\end{array}
\end{equation} This proves the result. 
\end{proof}

We know (see \cite[$\S$ 4.5.10]{Lod}) that the maps $\{A\otimes e_n^{(i)}:C_n(A)\longrightarrow C_n(A)\}_{1\leq i\leq n}$ induced by the idempotents 
$\{e_n^{(i)}\}_{1\leq i\leq n}$ lead to a direct sum decomposition $C_n(A)=C_n^{(1)}(A)\oplus ... \oplus C_n^{(n)}(A)$ which further splits the Hochschild complex $(C_\bullet^{hoc}(A),b)$ into a direct sum of subcomplexes $C_\bullet^{hoc,(i)}(A)$.  Accordingly, the Hochschild homology admits a direct sum decomposition $HH_n(A)=HH_n^{(1)}(A)\oplus ... \oplus HH_n^{(n)}(A)$ for $n\geq 1$ as mentioned in \eqref{4.1spl}. We also set $e^{(0)}_0:=id$ and $HH_0^{(0)}(A):=HH_0(A)$.  We are now ready to show that this direct sum decomposition of Hochschild homology is compatible with maps induced by coalgebra measurings. 

\begin{Thm}\label{T4.4}
Let $C$ be a cocommutative $K$-coalgebra and let $\Phi:C\longrightarrow Hom_K(A,A')$ be a  measuring between commutative algebras. For  each $x\in C$ and $i,n\geq 0$, we have maps 
\begin{equation}
C_n^{\Phi,(i)}(x): C_n^{(i)}(A)\longrightarrow C_n^{(i)}(A')
\end{equation} induced by restricting the maps $C_n^\Phi(x): C_n(A)\longrightarrow C_n(A')$. 
In particular, we have induced maps
\begin{equation}\label{4.16ind}
HH^{\Phi,(i)}_\bullet(x): HH_\bullet^{(i)}(A)\longrightarrow HH_\bullet^{(i)}(A')
\end{equation}
on direct summands appearing in the $\lambda$-decomposition of Hochschild homology groups. 
\end{Thm}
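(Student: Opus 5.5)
The plan is to deduce everything from Lemma \ref{Lm43} together with Proposition \ref{P1.1}. By definition, $C_n^{(i)}(A)$ is the image of the idempotent $A\otimes e_n^{(i)}$ acting on $C_n(A)$, and $C_n^{(i)}(A')$ is the image of $A'\otimes e'^{(i)}_n$ acting on $C_n(A')$. For $n=0$ we have $e^{(0)}_0=id$, so the claim is simply $C_0^\Phi(x)=\Phi(x)$ and there is nothing to check. For $n\geq 1$, take $c\in C_n^{(i)}(A)$ and write $c=(A\otimes e_n^{(i)})(c)$. The commutative square \eqref{413cd} then gives
\begin{equation*}
C_n^\Phi(x)(c)=C_n^\Phi(x)\bigl((A\otimes e_n^{(i)})(c)\bigr)=(A'\otimes e'^{(i)}_n)\bigl(C_n^\Phi(x)(c)\bigr),
\end{equation*}
so $C_n^\Phi(x)(c)$ lies in $C_n^{(i)}(A')$. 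We therefore define $C_n^{\Phi,(i)}(x)$ as the restriction of $C_n^\Phi(x)$. The same square also shows that $C_n^\Phi(x)$ is exactly the direct sum of these restrictions, because it commutes with every projection in the decomposition.

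Next we pass to homology. By Proposition \ref{P1.1}, $C_\bullet^\Phi(x)$ commutes with the Hochschild differential $b$. The subcomplexes $C_\bullet^{hoc,(i)}(A)$ are the summands of a splitting of $(C^{hoc}_\bullet(A),b)$ (see \cite[$\S$ 4.5.10]{Lod}), so $b$ maps $C_n^{(i)}(A)$ into $C_{n-1}^{(i)}(A)$. It follows that the restrictions $C_\bullet^{\Phi,(i)}(x)$ form a chain map $C_\bullet^{hoc,(i)}(A)\longrightarrow C_\bullet^{hoc,(i)}(A')$. Taking homology gives the maps $HH^{\Phi,(i)}_\bullet(x)$ in \eqref{4.16ind}. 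Since homology commutes with direct sums of complexes, $HH_n^\Phi(x)$ equals $\bigoplus_i HH_n^{\Phi,(i)}(x)$ under the identifications $HH_n(A)=\bigoplus_i HH_n^{(i)}(A)$ and $HH_n(A')=\bigoplus_i HH_n^{(i)}(A')$.

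Essentially all the substantive work sits in Lemma \ref{Lm43}, which is already available, so no step here should be a real obstacle. The only point to be careful about is the bookkeeping of which complex carries the splitting. The $\lambda$-decomposition is defined on the Hochschild complex $C_n(A)=A\otimes A^{\otimes n}$ via idempotents acting on the last $n$ factors. Lemma \ref{Lm43} is stated for exactly these maps, so the argument needs no passage to the normalized complex of Proposition \ref{P3.1}.
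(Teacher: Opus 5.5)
Your proposal is correct and follows essentially the paper's argument. Both use Lemma \ref{Lm43} to show that $C_n^\Phi(x)$ carries $\mathrm{Im}(A\otimes e_n^{(i)})$ into $\mathrm{Im}(A'\otimes e'^{(i)}_n)$, then pass to homology because the $\lambda$-summands are subcomplexes for $b$; the paper phrases the first step via orthogonality of the $e'^{(j)}_n$ where you write $c=(A\otimes e_n^{(i)})(c)$, and your explicit chain-map check fills in what the paper says ``follows directly''.
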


\begin{proof}
By definition, we know that $C_n^{(i)}(A)=Im(A\otimes e_n^{(i)}:C_n(A)\longrightarrow C_n(A))$.  Applying Lemma \ref{Lm43}, we have the following commutative diagram for any $1\leq i,j,\leq n$ and $x\in C$
\begin{equation}\label{417cd}
\begin{CD}
C_n^{(i)}(A)@>>> C_n(A) @>A\otimes e_n^{(i)}>>  C_n(A)@. \\
@. @VC_n^\Phi(x)VV @VVC_n^\Phi(x)V\\
@. C_n(A')@>A'\otimes e'^{(i)}_n>>C_n(A')@>A'\otimes e'^{(j)}_n>> C_n(A')\\
\end{CD}
\end{equation} where the map $C_n^{(i)}(A)\longrightarrow C_n(A)$ appearing in \eqref{417cd} is the canonical inclusion. Since $\{e'^{(i)}_n\}_{1\leq i\leq n}$ is a collection of mutually orthogonal idempotents whose sum is the identity, it is clear from \eqref{417cd} that the image of $C_n^{(i)}(A)$ under the map $C_n^\Phi(x): C_n(A)\longrightarrow C_n(A')$ lies inside the direct summand $C_n^{(i)}(A')$. The maps in \eqref{4.16ind} now follow directly from this.
\end{proof}

\begin{cor}
Let $C$ be a cocommutative $K$-coalgebra and let $\Phi:C\longrightarrow Hom_K(A,A')$ be a  measuring between commutative algebras.  For $x\in C$ and $i$, $j\geq 1$, the following diagrams commute
\begin{equation}
\begin{CD}
HH_\bullet^{(i)}(A)\otimes HH_\bullet^{(j)}(A) @>\times >> HH_\bullet^{(i+j)}(A)\\
@VHH_\bullet^{\Phi,(i)}(x_{(1)})\otimes HH_\bullet^{\Phi, (j)}(x_{(2)})VV @VVHH_\bullet^{\Phi, (i+j)}(x)V \\
HH_\bullet^{(i)}(A')\otimes HH_\bullet^{(j)}(A') @>\times >> HH_\bullet^{(i+j)}(A')\\
\end{CD}
\end{equation} 
\end{cor}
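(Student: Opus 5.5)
The plan is to combine two facts already available: the measuring property of $\Phi^\times$ for the shuffle product, and the compatibility of the maps $HH^\Phi_\bullet(x)$ with the $\lambda$-decomposition from Theorem \ref{T4.4}. The first is \cite[Proposition 2.3]{BK1}, recalled in \eqref{3.6ms} and \eqref{klausmet}. By \eqref{4.2shf}, the shuffle product restricts to $HH_p^{(i)}(A)\otimes HH_q^{(j)}(A)\to HH_{p+q}^{(i+j)}(A)$, and similarly for $A'$. So both horizontal arrows in the diagram are simply restrictions of the full product $\times$ on $HH_\bullet$.

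Next, I identify the vertical maps. By Theorem \ref{T4.4}, $HH^{\Phi,(i)}_\bullet(y)$ is the restriction of $HH^\Phi_\bullet(y)$ to the summand $HH^{(i)}_\bullet(A)$, and it lands in $HH^{(i)}_\bullet(A')$. Concretely, the chain map $C^\Phi_n(y)$ carries $C_n^{(i)}(A)$ into $C_n^{(i)}(A')$. Hence, on the summand, the two vertical maps are the restrictions of $HH^\Phi_\bullet(x_{(1)})\otimes HH^\Phi_\bullet(x_{(2)})$ and of $HH^\Phi_\bullet(x)$. It is worth noting that $\Delta_C(x)=x_{(1)}\otimes x_{(2)}$ is a finite sum, so the left vertical arrow is a well-defined map into $HH^{(i)}_\bullet(A')\otimes HH^{(j)}_\bullet(A')$.

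With these identifications, I take classes $\alpha\in HH_p^{(i)}(A)$ and $\beta\in HH_q^{(j)}(A)$, represented by cycles in $C^{(i)}_p(A)$ and $C^{(j)}_q(A)$. Applying \eqref{klausmet} at chain level gives
\[
HH^\Phi_\bullet(x)(\alpha\times\beta)=HH^\Phi_\bullet(x_{(1)})(\alpha)\times HH^\Phi_\bullet(x_{(2)})(\beta).
\]
The left side is $HH^{\Phi,(i+j)}_\bullet(x)(\alpha\times\beta)$, because $\alpha\times\beta$ lies in the $(i+j)$-summand. The right side is $HH^{\Phi,(i)}_\bullet(x_{(1)})(\alpha)\times HH^{\Phi,(j)}_\bullet(x_{(2)})(\beta)$. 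This is exactly the commutativity of the square.

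The only point needing care is the bookkeeping that the summand-wise maps really are restrictions of the global maps. This follows directly from the construction in the proof of Theorem \ref{T4.4}, which rests on Lemma \ref{Lm43}, so no genuine obstacle is expected.
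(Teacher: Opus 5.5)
Your proposal is correct and follows essentially the same route as the paper. The paper also combines the shuffle-product measuring property from \cite[Proposition 2.3]{BK1}, the compatibility of the shuffle product with the $\lambda$-decomposition \eqref{4.2shf}, and Theorem \ref{T4.4}, and then reads off commutativity of the square by restricting to summands.
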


\begin{proof}
From \cite[Proposition 2.3]{BK1}, we already know that the coalgebra measuring $\Phi$ induces a measuring with respect to the shuffle product on Hochschild homology algebras, i.e., we have
\begin{equation}
HH_{m+n}^\Phi(x)(\tilde a\times \tilde b)=HH_m^\Phi(x_{(1)})(\tilde a)\times HH_n^\Phi(x_{(2)})(\tilde b) 
\end{equation} for $m$, $n\geq 0$ and $\tilde a\in HH_m(A)$, $\tilde b\in HH_n(A)$.  We also know (see \cite[$\S$ 4.5.14]{Lod}) that the shuffle product on Hochschild homology is well behaved with respect to the $\lambda$-decomposition as mentioned in \eqref{4.2shf}. From Theorem \ref{T4.4}, it follows that the maps $HH_\bullet^\Phi(x):HH_\bullet(A)\longrightarrow HH_\bullet(A')$  induced by the measuring are also well behaved with respect to the $\lambda$-decomposition. The result is now clear. 
\end{proof}

We also know that the maps $A\otimes e_n^{(i)}:C_n(A)=A\otimes A^{\otimes n}\longrightarrow A\otimes A^{\otimes n}= C_n(A)$ induce maps
$A\otimes \bar{e}_n^{(i)}:\bar{C}_n(A)=A\otimes \bar{A}^{\otimes n}\longrightarrow A\otimes \bar{A}^{\otimes n}=\bar{C}_n(A)$
 which split  the normalized mixed complex ${\overline{\mathscr BC}}_{\bullet\bullet}(A):=(\bar{C}_\bullet(A),b,B)$ into a direct sum of subcomplexes
${\overline{\mathscr BC}}^{(i)}_{\bullet\bullet}(A)$, $i\geq 0$ (see \cite[$\S$ 4.6.7]{Lod}). Accordingly, the cyclic homology of $A$ has a $\lambda$-decomposition
\begin{equation}\label{4.cspl}
HC_0(A)=HC_0^{(0)}(A)\qquad \qquad HC_n(A)=HC_n^{(1)}(A)\oplus ... \oplus HC_n^{(n)}(A)\qquad n\geq 1
\end{equation}  where $HC_\bullet^{(i)}(A)$ is the total homology of the bicomplex ${\overline{\mathscr BC}}^{(i)}_{\bullet\bullet}(A)$.

\begin{Thm}\label{T4.6}
Let $C$ be a cocommutative $K$-coalgebra and let $\Phi:C\longrightarrow Hom_K(A,A')$ be a  measuring between commutative algebras. For  each $x\in C$ and $i,n\geq 0$, the maps
$
C_n^{\Phi,(i)}(x): C_n^{(i)}(A)\longrightarrow C_n^{(i)}(A')
$  induce maps
\begin{equation}\label{4.21nd}
HC^{\Phi,(i)}_\bullet(x): HC_\bullet^{(i)}(A)\longrightarrow HC_\bullet^{(i)}(A')
\end{equation}
on direct summands appearing in the $\lambda$-decomposition of cyclic homology groups. 
\end{Thm}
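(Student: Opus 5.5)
The plan is to pass to the normalized mixed complex and show that the maps $\bar C^\Phi_n(x)$ of Proposition \ref{P3.1} preserve each summand ${\overline{\mathscr BC}}^{(i)}_{\bullet\bullet}$. Since these maps already give a morphism of mixed complexes $(\bar{C}_\bullet(A),b,B)\longrightarrow(\bar{C}_\bullet(A'),b,B)$, we then restrict to each summand and take total homology.

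\textbf{Step 1: intertwining on the normalized complex.} We first check that $A\otimes e_n^{(i)}$ descends to $A\otimes\bar e_n^{(i)}$ on $\bar C_n(A)$. This is given, citing \cite[$\S$ 4.6.7]{Lod}. Next, Lemma \ref{Lm43} gives
\[
(A'\otimes e'^{(i)}_n)\circ C^\Phi_n(x)=C^\Phi_n(x)\circ(A\otimes e^{(i)}_n).
\]
The canonical projections $C_n(A)\to\bar C_n(A)$ are surjective, and they intertwine $C^\Phi_n(x)$ with $\bar C^\Phi_n(x)$ by the proof of Proposition \ref{P3.1}. Hence we obtain the analogous identity
\[
(A'\otimes\bar e'^{(i)}_n)\circ \bar C^\Phi_n(x)=\bar C^\Phi_n(x)\circ(A\otimes\bar e^{(i)}_n).
\]

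\textbf{Step 2: preservation of the summands.} We argue as in the proof of Theorem \ref{T4.4}. The $\bar e'^{(j)}_n$ are mutually orthogonal idempotents summing to the identity, and $\bar C^{(i)}_n(A)$ is the image of $A\otimes\bar e^{(i)}_n$. Take $y=(A\otimes\bar e^{(i)}_n)(z)$. Then
\[
\bar C^\Phi_n(x)(y)=(A'\otimes\bar e'^{(i)}_n)\bigl(\bar C^\Phi_n(x)(z)\bigr)\in \bar C^{(i)}_n(A').
\]
So $\bar C^\Phi_\bullet(x)$ maps $\bar C^{(i)}_\bullet(A)$ into $\bar C^{(i)}_\bullet(A')$. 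For $n=0$ we use $e^{(0)}_0=id$.

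\textbf{Step 3: passage to homology.} The summand ${\overline{\mathscr BC}}^{(i)}_{\bullet\bullet}(A)$ is the sub-mixed complex whose column $n-2k$ is $\bar C^{(i-k)}_{n-2k}(A)$. The reindexing arises because $B$ maps $\bar C^{(j)}_m$ into $\bar C^{(j+1)}_{m+1}$ \cite[$\S$ 4.6.7]{Lod}. Whatever the exact indexing, each summand is built entwry-wise from the pieces $\bar C^{(j)}_m$. Since $\bar C^\Phi(x)$ preserves every such piece and commutes with $b$ and $B$, it restricts to a morphism of mixed complexes
\[
{\overline{\mathscr BC}}^{(i)}_{\bullet\bullet}(A)\longrightarrow{\overline{\mathscr BC}}^{(i)}_{\bullet\bullet}(A').
\]
Taking total homology yields $HC^{\Phi,(i)}_\bullet(x)$. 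These maps are compatible with the splitting, so their direct sum is $HC^\Phi_\bullet(x)$ by Proposition \ref{P3.1}.

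The main obstacle is Step 1: verifying that the normalized idempotents $\bar e^{(i)}_n$ are genuinely induced from $e^{(i)}_n$, so that the intertwining descends. For this I would rely on the fact that the kernel of $C_n(A)\to\bar C_n(A)$ is stable under $A\otimes e^{(i)}_n$, as in \cite{Lod}. I would also track the index shift of $B$ carefully, so that the restriction is stated for the correct summand.
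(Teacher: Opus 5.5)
Your proposal is correct and follows essentially the same route as the paper. Like the paper, you descend the intertwining of Lemma \ref{Lm43} to the normalized complex via surjectivity of $C_n(A)\to\bar C_n(A)$, deduce that $\bar C^\Phi_\bullet(x)$ preserves the summands ${\overline{\mathscr BC}}^{(i)}_{\bullet\bullet}$, and take total homology; your Steps 2 and 3 make explicit what the paper leaves as ``it follows'' (image preservation, and the $B$-shift in $\lambda$-weight that determines which pieces $\bar C^{(i-k)}_{n-2k}$ form each summand).
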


\begin{proof}
Let $x\in C$. We know that the maps $A\otimes e_n^{(i)}:C_n(A)=A\otimes A^{\otimes n}\longrightarrow A\otimes A^{\otimes n}= C_n(A)$  (resp. $A'\otimes e'^{(i)}_n:C_n(A')=A'\otimes A'^{\otimes n}\longrightarrow A'\otimes A'^{\otimes n}= C_n(A')$) descend  to $A\otimes \bar{e}_n^{(i)}:\bar{C}_n(A)=A\otimes \bar{A}^{\otimes n}\longrightarrow A\otimes \bar{A}^{\otimes n}=\bar{C}_n(A)$ (resp. $A'\otimes \bar{e}'^{(i)}_n:\bar{C}_n(A')=A'\otimes \bar{A}'^{\otimes n}\longrightarrow A'\otimes \bar{A'}^{\otimes n}=\bar{C}_n(A')$) and the maps $C_n^\Phi(x)$ descend to $\bar{C}^\Phi_n(x)$.  We now consider the following two diagrams
\begin{equation}\label{422cd2}
\begin{array}{ccc}
\begin{CD}
C_n(A)=A\otimes A^{\otimes n}@>A\otimes e_n^{(i)}>>  A\otimes A^{\otimes n}=C_n(A)\\
@VC_n^\Phi(x)VV @VVC_n^\Phi(x)V\\
C_n(A')=A'\otimes A'^{\otimes n} @>A'\otimes e'^{(i)}_n>>=A'\otimes A'^{\otimes n}=C_n(A')\\
\end{CD} &\Rightarrow & \begin{CD}
\bar{C}_n(A)=A\otimes \bar{A}^{\otimes n}@>A\otimes \bar{e}_n^{(i)}>>  A\otimes \bar{A}^{\otimes n}=\bar{C}_n(A)\\
@V\bar{C}_n^\Phi(x)VV @VV\bar{C}_n^\Phi(x)V\\
\bar{C}_n(A')=A'\otimes \bar{A}'^{\otimes n} @>A'\otimes \bar{e}'^{(i)}_n>>=A'\otimes \bar{A}'^{\otimes n}=\bar{C}_n(A')\\
\end{CD}  \\
\end{array}
\end{equation} The left hand side square in \eqref{422cd2} commutes by Lemma \ref{Lm43}. Since the maps $C_n(A)\longrightarrow \bar{C}_n(A)$ are epimorphisms, it follows that the right hand square in \eqref{422cd2} also commutes. From this, it follows that for each $x\in C$, we have induced maps
\begin{equation}
{\overline{\mathscr BC}}^{\Phi,(i)}_{\bullet\bullet}(x):{\overline{\mathscr BC}}^{(i)}_{\bullet\bullet}(A)\longrightarrow {\overline{\mathscr BC}}^{(i)}_{\bullet\bullet}(A')
\end{equation} on the direct summands of the normalized mixed complexes computing cyclic homology. The result is now clear. 
\end{proof} 

We know (see \cite[$\S$ 4.6.9]{Lod}) that the bicomplexes ${\overline{\mathscr BC}}^{(i)}_{\bullet\bullet}(A)$ and the normalized Hochschild complex $\bar{C}_\bullet^{hoc,(i)}(A)$ fit into a short exact sequence $0\longrightarrow \bar{C}_\bullet^{hoc,(i)}(A)\longrightarrow {\overline{\mathscr BC}}^{(i)}_{\bullet\bullet}(A)\longrightarrow {\overline{\mathscr BC}}^{(i-1)}_{\bullet\bullet}(A)[2]\longrightarrow 0$. Accordingly, we have the following result. 

\begin{cor}\label{C4.7}
Let $C$ be a cocommutative $K$-coalgebra and let $\Phi:C\longrightarrow Hom_K(A,A')$ be a  measuring between commutative algebras. For each $x\in C$, the maps $HH_\bullet^{\Phi, (i)}(x)$ and 
$HC_\bullet^{\Phi,(i)}(x)$ fit into the following commutative diagram with respect to the Connes periodicity sequences
\begin{equation}
\begin{CD}
\dots @>>> HH_n^{(i)}(A) @>I>> HC_n^{(i)}(A) @>S>> HC_{n-2}^{(i-1)}(A) @>B>> HH^{(i)}_{n-1}(A) @>>> \dots \\
@. @VHH^{\Phi,(i)}_n(x)VV @VHC^{\Phi,(i)}_n(x)VV @VHC^{\Phi,(i-1)}_{n-2}(x)VV @VHH^{\Phi,(i)}_{n-1}(x)VV @. \\
\dots @>>> HH_n^{(i)}(A') @>I>> HC_n^{(i)}(A') @>S>> HC_{n-2}^{(i-1)}(A') @>B>> HH^{(i)}_{n-1}(A') @>>> \dots \\
\end{CD}
\end{equation} 
\end{cor}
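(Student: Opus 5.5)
The plan is to realize the $\lambda$-graded periodicity sequence as the long exact homology sequence of the short exact sequence of complexes
\begin{equation*}
0\longrightarrow \bar{C}_\bullet^{hoc,(i)}(A)\longrightarrow {\overline{\mathscr BC}}^{(i)}_{\bullet\bullet}(A)\longrightarrow {\overline{\mathscr BC}}^{(i-1)}_{\bullet\bullet}(A)[2]\longrightarrow 0
\end{equation*}
recalled from \cite[$\S$ 4.6.9]{Lod}, and similarly for $A'$. Once the maps induced by $x\in C$ form a morphism between these two short exact sequences, commutativity of the whole ladder follows from naturality of the connecting homomorphism. That naturality gives the square involving $B$, while the squares involving $I$ and $S$ come directly from commutativity of the chain-level squares.

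\textbf{Chain maps on the pieces.} For fixed $x\in C$, Proposition \ref{P3.1} gives $\bar{C}^\Phi_\bullet(x)$, a morphism of normalized mixed complexes, so it commutes with both $b$ and $B$. Theorem \ref{T4.6}, via the right-hand square in \eqref{422cd2}, shows that $\bar{C}^\Phi_n(x)$ intertwines $A\otimes\bar e^{(i)}_n$ with $A'\otimes\bar e'^{(i)}_n$ for every $i$. Hence it carries the summand $A\otimes\bar e^{(i)}_n(\bar C_n(A))$ into the corresponding summand for $A'$, by the same orthogonal-idempotent argument as in Theorem \ref{T4.4}. It follows that $x$ induces chain maps on $\bar{C}_\bullet^{hoc,(i)}$, on ${\overline{\mathscr BC}}^{(i)}_{\bullet\bullet}$ and on ${\overline{\mathscr BC}}^{(i-1)}_{\bullet\bullet}[2]$. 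On total complexes each of these maps is simply $\bar C^\Phi_\bullet(x)$ applied componentwise on $\bar C_p\oplus\bar C_{p-2}\oplus\cdots$ restricted to the appropriate weight summands.

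\textbf{Compatibility with the sequence maps.} I will check that these chain maps commute with the inclusion (the first column, i.e. the Hochschild part) and with the projection that forgets the first column and shifts. Both maps are defined purely by inclusion or projection of components in $\bar C_p\oplus\bar C_{p-2}\oplus\cdots$, and our map acts componentwise by $\bar C^\Phi(x)$. The commutation is therefore immediate, provided the weight bookkeeping matches: $\bar C^{(i)}_{p-2}$ in ${\overline{\mathscr BC}}^{(i)}$ should go to weight $i-1$ in the shifted complex. This is the main point to verify. The grading shift is a reindexing of summands fixed by the construction in \cite[$\S$ 4.6]{Lod}, and it is independent of the algebra, since it is defined through the idempotents $\bar e^{(j)}$. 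Since $\bar C^\Phi(x)$ preserves every weight summand $j$ simultaneously, it commutes with whatever reindexing is used. Thus we obtain a morphism of short exact sequences of complexes.

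\textbf{Conclusion and identification of the maps.} Taking the associated long exact sequences yields the commutative diagram, with vertical maps $HH^{\Phi,(i)}_n(x)$, $HC^{\Phi,(i)}_n(x)$, $HC^{\Phi,(i-1)}_{n-2}(x)$ and $HH^{\Phi,(i)}_{n-1}(x)$. It remains to check that the map induced on the Hochschild part agrees with $HH^{\Phi,(i)}_\bullet(x)$ of Theorem \ref{T4.4}, which was defined on the unnormalized complex. This follows because the quotient map $C^{(i)}_\bullet(A)\to\bar C^{(i)}_\bullet(A)$ is a quasi-isomorphism compatible with $C^\Phi(x)$ and $\bar C^\Phi(x)$, as in the proof of Proposition \ref{P3.1}.
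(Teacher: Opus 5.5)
Your proposal is correct and takes the paper's route. The paper also deduces the corollary from the morphism, induced by the weight-preserving chain maps of Theorem \ref{T4.6}, between Loday's short exact sequences $0\to\bar{C}^{hoc,(i)}_\bullet\to{\overline{\mathscr BC}}^{(i)}_{\bullet\bullet}\to{\overline{\mathscr BC}}^{(i-1)}_{\bullet\bullet}[2]\to 0$; you simply spell out the naturality argument in more detail.

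One small slip in your weight bookkeeping: because $B$ raises weight by one, the second column of ${\overline{\mathscr BC}}^{(i)}_{\bullet\bullet}$ is already $\bar{C}^{(i-1)}_{\bullet-2}$, not $\bar{C}^{(i)}_{\bullet-2}$. So the identification with ${\overline{\mathscr BC}}^{(i-1)}_{\bullet\bullet}[2]$ just drops the first column, which is why your componentwise map commutes with it.
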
 

\section{Measurings and maps in Lie algebra homology and Leibniz homology}

In this section, we relate maps on Lie algebra homology and Leibniz homology induced by measurings to maps on cyclic homology and Hochschild homology. For a  $K$-algebra $A$ and $r\geq 1$, let $M_r(A)$ denote the algebra of $(r\times r)$-matrices. We let $gl_r(A)$ denote the Lie algebra obtained from $M_r(A)$ by setting the Lie bracket to be $[\alpha,\beta]:=\alpha\beta-\beta\alpha$ for matrices $\alpha$, $\beta\in M_r(A)$.  By considering the canonical inclusions $M_r(A)\hookrightarrow M_{r+1}(A)$ obtained by adding $0$ entries to the matrices, we have the direct limit algebra $M(A)$. Similarly, we denote by $gl(A)$ the Lie algebra obtained from $M(A)$ by taking the Lie bracket to be $[\alpha,\beta]:=\alpha\beta-\beta\alpha$ for $\alpha$, $\beta\in M(A)$

\smallskip
We recall (see for instance, \cite{Lod3}) that a Leibniz algebra $\mathfrak g$ over $K$ consists of a $K$-vector space $\mathfrak g$ along with a $K$-bilinear map
$[\_\_,\_\_]:\mathfrak g\times \mathfrak g\longrightarrow \mathfrak g$ which satisfies the relation
\begin{equation}
[x,[y,z]]-[[x,y],z]+[[x,z],y]=0\qquad \mbox{for all}\quad x,y,z\in \mathfrak g
\end{equation} In particular, any Lie algebra is also a Leibniz algebra for the same bracket operation. In \cite[$\S$ 3]{BK1}, we have already considered measurings between Lie algebras. We now broaden this definition to include Leibniz algebras.

\begin{defn}
\label{D5.1} Let $(\mathfrak g,[\_\_\,\_\_])$ and $(\mathfrak g',[\_\_,\_\_]')$ be Lie algebras (resp. Leibniz algebras). A coalgebra measuring $(C,\Psi)$ of Lie algebras (resp. Leibniz algebras) from $\mathfrak g$ to $\mathfrak g'$ consists of a cocommutative $K$-coalgebra $C$ and a $K$-linear map $\Psi:C\longrightarrow Hom_K(\mathfrak g,\mathfrak g')$ which satisfies
\begin{equation}
\Psi(x)([\alpha,\beta])=[\Psi(x_{(1)})(\alpha),\Psi(x_{(2)})(\beta)]'\qquad \alpha,\beta\in \mathfrak g, x\in C
\end{equation} where the coproduct $\Delta_C$ on $C$ is written as $\Delta_C(x)=x_{(1)}\otimes x_{(2)}$ in Sweedler notation by suppressing summation signs for each $x\in C$.
\end{defn}

For a Lie algebra $\mathfrak g$, we know that the classical Chevalley-Eilenberg complex $CE_\bullet(\mathfrak g)$ is given by setting $CE_n(\mathfrak g):=\wedge^n\mathfrak g$ for 
$n\geq 0$ along with   differentials  given by
\begin{equation}\label{cediff}
d_{CE}:CE_n(\mathfrak g)\longrightarrow CE_{n-1}(\mathfrak g)\qquad d_{CE}(\alpha_1\wedge ...\wedge \alpha_n):=\underset{1\leq i<j\leq n}{\sum}(-1)^{i+j+1} [\alpha_i,\alpha_j]
\wedge \alpha_1\wedge ...\wedge \hat{\alpha}_i\wedge ...\wedge \hat{\alpha}_j\wedge ... \wedge \alpha_n
\end{equation} where $\hat{\alpha}_i$ in \eqref{cediff} refers to $\alpha_i$ removed from the sequence. The Lie algebra homology of $\mathfrak g$, denoted by $H_\bullet(\mathfrak g)$ is the homology of the  Chevalley-Eilenberg complex $CE_\bullet(\mathfrak g)$.

\smallskip 
Let $\Psi:C\longrightarrow Hom_K(\mathfrak g,\mathfrak g')$ be a coalgebra measuring between Lie algebras. For each $x\in C$, we showed in \cite[Proposition 3.3]{BK1} that the maps
\begin{equation}
CE^\Psi_n(x):CE_n(\mathfrak g)\longrightarrow CE_n(\mathfrak g') \qquad \alpha_1\wedge ... \wedge \alpha_n\mapsto x( \alpha_1\wedge ... \wedge \alpha_n):= x_{(1)}(\alpha_1)\wedge ... \wedge x_{(n)}(\alpha_n)
\end{equation} determine a map between Chevalley-Eilenberg complexes. This induces maps $H^\Psi_\bullet(x):H_\bullet(\mathfrak g)\longrightarrow H_\bullet(\mathfrak g')$ on Lie algebra homology for each $x\in C$. 

\smallskip
For any Lie algebra $\mathfrak g$, we have the diagonal map $\delta_{\mathfrak g}:\mathfrak g\longrightarrow \mathfrak g\oplus \mathfrak g$ given by $\alpha\mapsto (\alpha,\alpha)$. To distinguish between the two components, we  will often write $\delta_{\mathfrak g}(\alpha)=(\alpha,\alpha)=(\alpha^{(1)},\alpha^{(2)})$. Then, $\delta_{\mathfrak g}$ is a map of Lie algebras and by using the canonical isomorphisms  $\wedge^n(\mathfrak g\oplus \mathfrak g)=\underset{p+q=n}{\sum}\wedge^p\mathfrak g\otimes \wedge^q\mathfrak g$, it is well known (see, for instance, \cite[$\S$ 10.1.5]{Lod}) that we have a coproduct  
\begin{equation}\label{diagco}
\Delta_{\mathfrak g}:H_\bullet(\mathfrak g)\longrightarrow H_\bullet(\mathfrak g)\otimes H_\bullet(\mathfrak g)
\end{equation} on Lie algebra homology. Our first result in this section is that the maps on Lie algebra homologies induced by a coalgebra measuring are compatible with this coproduct. 

\begin{thm}\label{P5.1}   Let $C$ be a cocommutative coalgebra and let $\Psi:C\longrightarrow Hom_K(\mathfrak g,\mathfrak g')$  be a measuring of Lie algebras from $\mathfrak g$ to $\mathfrak g'$. Then for each $x\in C$, the following diagram commutes
\begin{equation}
\begin{CD}
H_\bullet(\mathfrak g) @>\Delta_{\mathfrak g}>> H_\bullet(\mathfrak g) \otimes H_\bullet(\mathfrak g) \\
@VH^\Psi_\bullet(x)VV @VVH^\Psi_\bullet(x_{(1)})\otimes H^\Psi_\bullet(x_{(2)})V\\
H_\bullet(\mathfrak g') @>\Delta_{\mathfrak g'}>> H_\bullet(\mathfrak g') \otimes H_\bullet(\mathfrak g') \\
\end{CD}
\end{equation}
where we write the coproduct $\Delta_C(x):=x_{(1)}\otimes x_{(2)}$ by suppressing the summation signs. 
\end{thm}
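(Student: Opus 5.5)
We prove the commutativity at the chain level and then pass to homology. The coproduct $\Delta_{\mathfrak g}$ on $H_\bullet(\mathfrak g)$ is the composite of the map $CE_\bullet(\delta_{\mathfrak g}):CE_\bullet(\mathfrak g)\longrightarrow CE_\bullet(\mathfrak g\oplus\mathfrak g)$ induced by the Lie algebra map $\delta_{\mathfrak g}$ with the canonical isomorphism $\wedge^n(\mathfrak g\oplus\mathfrak g)\cong \bigoplus_{p+q=n}\wedge^p\mathfrak g\otimes\wedge^q\mathfrak g$. The latter is an isomorphism of complexes $CE_\bullet(\mathfrak g\oplus\mathfrak g)\cong CE_\bullet(\mathfrak g)\otimes CE_\bullet(\mathfrak g)$, and Künneth (we are over a field) identifies its homology with $H_\bullet(\mathfrak g)\otimes H_\bullet(\mathfrak g)$. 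Explicitly, the chain-level coproduct is
\begin{equation*}
\alpha_1\wedge\dots\wedge\alpha_n\mapsto \sum_{p+q=n}\ \sum_{\sigma\in S_{p,q}} sgn(\sigma)\,(\alpha_{\sigma(1)}\wedge\dots\wedge\alpha_{\sigma(p)})\otimes(\alpha_{\sigma(p+1)}\wedge\dots\wedge\alpha_{\sigma(n)}).
\end{equation*}
By \cite[Proposition 3.3]{BK1}, $CE^\Psi_\bullet(x)$ is a chain map. The tensor product $CE^\Psi_\bullet(x_{(1)})\otimes CE^\Psi_\bullet(x_{(2)})$ is therefore a chain map of tensor complexes, and on homology it induces $H^\Psi_\bullet(x_{(1)})\otimes H^\Psi_\bullet(x_{(2)})$ under the Künneth identification. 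It thus suffices to check the square at the level of chains.

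For this, apply both routes to $\alpha_1\wedge\dots\wedge\alpha_n$. Going down first gives $x_{(1)}(\alpha_1)\wedge\dots\wedge x_{(n)}(\alpha_n)$; applying the shuffle formula then yields
\begin{equation*}
\sum sgn(\sigma)\,\big(x_{(\sigma(1))}(\alpha_{\sigma(1)})\wedge\dots\big)\otimes\big(\dots\wedge x_{(\sigma(n))}(\alpha_{\sigma(n)})\big).
\end{equation*}
Going right first and then applying $x_{(1)}\otimes x_{(2)}$ gives
\begin{equation*}
\sum sgn(\sigma)\,\big(x_{(1)}(\alpha_{\sigma(1)})\wedge\dots\wedge x_{(p)}(\alpha_{\sigma(p)})\big)\otimes\big(x_{(p+1)}(\alpha_{\sigma(p+1)})\wedge\dots\wedge x_{(n)}(\alpha_{\sigma(n)})\big),
\end{equation*}
using coassociativity to split $x_{(1)}$ into $p$ factors and $x_{(2)}$ into $q$ factors.

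Cocommutativity lets the Sweedler indices be permuted freely in $\Delta^{n-1}(x)$, so the two expressions agree term by term. This is the same argument used for \eqref{213po} and in Proposition \ref{P2.4aj}. Note also that when $p=0$ the factor $\Psi(x_{(1)})$ acts on $\wedge^0=K$ by $\epsilon_C$, so the counit property handles the degenerate summands.

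The main care point is to confirm that the Künneth identification is natural with respect to the maps $CE^\Psi(x_{(1)})\otimes CE^\Psi(x_{(2)})$, which involve a sum over the Sweedler decomposition rather than a single tensor of maps. Since each summand is a tensor product of chain maps and the Künneth map is natural in each variable, linearity handles this. Everything else reduces to the index bookkeeping above.
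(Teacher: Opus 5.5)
Your proposal is correct and follows essentially the same route as the paper. Both expand $\wedge^n\delta_{\mathfrak g}$ into its shuffle terms under $\wedge^n(\mathfrak g\oplus\mathfrak g)\cong\bigoplus_{p+q=n}\wedge^p\mathfrak g\otimes\wedge^q\mathfrak g$, then use cocommutativity to permute the Sweedler indices so the two composites agree term by term at the chain level. You are only more explicit than the paper about passing to homology (naturality of the K\"unneth isomorphism over a field) and about the degree-zero summands, which the counit handles.
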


\begin{proof}
We take $x\in C$ and $\alpha_1\wedge ... \wedge \alpha_n\in CE_n(\mathfrak g)$. By definition, we have $\wedge^n\delta_{\mathfrak g}(\alpha_1\wedge ... \wedge \alpha_n)=((\alpha_1^{(1)},\alpha_1^{(2)})\wedge ... \wedge (\alpha_n^{(1)},\alpha_n^{(2)}))$. We consider a term $(\alpha_1^{(i_1)},...,\alpha_n^{(i_n)})$ appearing in the expansion of $\wedge^n\delta_{\mathfrak g}(\alpha_1\wedge ... \wedge \alpha_n)$ where $p$ of the terms are in the first component and $q$ of the terms are in the second component. In particular, we have $p+q=n$. Let $\sigma$ be the permutation that rearranges $(\alpha_1^{(i_1)},...,\alpha_n^{(i_n)})$ such that the first $p$ terms are in the first component 
and the remaining $q$ terms are in the second component. Then, by definition of the canonical maps $\wedge^n(\mathfrak g\oplus \mathfrak g)\longrightarrow \wedge^p\mathfrak g\otimes \wedge^q\mathfrak g$ that together determine the isomorphism $\wedge^n(\mathfrak g\oplus\mathfrak g)\overset{\cong} {\longrightarrow} \underset{p+q=n}{\bigoplus}\wedge^p\mathfrak g\otimes \wedge^q\mathfrak g$, the term $(\alpha_1^{(i_1)},...,\alpha_n^{(i_n)})$ appearing in the expansion of $\wedge^n\delta_{\mathfrak g}(\alpha_1\wedge ... \wedge \alpha_n)$  maps to $sgn(\sigma)(\alpha_{\sigma^{-1}(1)}\wedge ... \wedge \alpha_{\sigma^{-1}(p)})\otimes (\alpha_{\sigma^{-1}(p+1)}\wedge ...\wedge \alpha_{\sigma^{-1}(p+q)})\in \wedge^p\mathfrak g\otimes \wedge^q\mathfrak g$. We note that
\begin{equation}\label{5.7yu}
\begin{array}{l}
sgn(\sigma)x_{(1)}(\alpha_{\sigma^{-1}(1)}\wedge ... \wedge \alpha_{\sigma^{-1}(p)})\otimes x_{(2)}(\alpha_{\sigma^{-1}(p+1)}\wedge ...\wedge \alpha_{\sigma^{-1}(p+q)})\\=sgn(\sigma)(x_{(1)}(\alpha_{\sigma^{-1}(1)})\wedge ... \wedge x_{(p)}(\alpha_{\sigma^{-1}(p)}))\otimes (x_{(p+1)}(\alpha_{\sigma^{-1}(p+1)})\wedge ...\wedge x_{(n)}(\alpha_{\sigma^{-1}(p+q)}))\\
\end{array}
\end{equation} Since $C$ is cocommutative, it is clear that the term in \eqref{5.7yu} equates to the term  
\begin{equation} sgn(\sigma)(x_{\sigma^{-1}(1)}(\alpha_{\sigma^{-1}(1)})\wedge ... \wedge x_{\sigma^{-1}(p)}(\alpha_{\sigma^{-1}(p)}))\otimes (x_{\sigma^{-1}(p+1)}(\alpha_{\sigma^{-1}(p+1)})\wedge ...\wedge x_{\sigma^{-1}(p+q)}(\alpha_{\sigma^{-1}(p+q)}))\in \wedge^p\mathfrak g'\otimes \wedge^q\mathfrak g'
\end{equation} appearing in the expansion of $\wedge^n\delta_{\mathfrak g'}(x(\alpha_1\wedge ... \wedge \alpha_n))=\wedge^n\delta_{\mathfrak g'}(x_{(1)}(\alpha_1)\wedge ... \wedge x_{(n)}(\alpha_n))$. By comparing term by term, the result is now clear. 
\end{proof}

We now set $\mathfrak g:=gl(A)$ to be the Lie algebra of matrices over $A$. We denote by $\Delta:=\Delta_{\mathfrak g}:H_\bullet(gl(A))\longrightarrow H_\bullet(gl(A))\otimes H_\bullet(gl(A))$ the coproduct on 
$H_\bullet(\mathfrak g)=H_\bullet(gl(A))$ induced by the diagonal map as in \eqref{diagco}. Additionally, we know (see \cite[$\S$ 10.2]{Lod}) that $H_\bullet(gl(A))$ is a graded Hopf algebra and that we have an isomorphism
\begin{equation}\label{5.9iso}
Prim(H_\bullet(gl(A))):=\{\mbox{$\alpha\in H_\bullet(\mathfrak g)=H_\bullet(gl(A))$ $\vert$ $\Delta(\alpha)=1\otimes \alpha+\alpha\otimes 1$}\}\cong HC_{\bullet-1}(A)
\end{equation}
between the primitive part $Prim(H_\bullet(gl(A)))$ of the Hopf algebra $H_\bullet(gl(A))$ and the cyclic homology (shifted by degree $1$) of $A$. Similarly, we set $\mathfrak g'$ to be the Lie algebra 
$\mathfrak g':=gl(A')$ and $\Delta':=\Delta_{\mathfrak g'}$ to be the coproduct on $H_\bullet(gl(A'))$. 

\smallskip
Now let $C$ be a cocommutative coalgebra and let $\Phi:C\longrightarrow Hom_K(A,A')$ be a measuring between  algebras. As noted in \cite[$\S$ 3]{BK1}, we can verify  that this induces a measuring of Lie algebras from $gl(A)$ to $gl(A')$ (resp. from each $gl_r(A)$ to $gl_r(A')$), which we denote by 
$gl(\Phi):C\longrightarrow Hom_K(gl(A),gl(A'))$ (resp. $gl_r(\Phi):C\longrightarrow Hom_K(gl_r(A),gl_r(A'))$).  Accordingly, we have induced maps
\begin{equation}\label{5.10j}
H_\bullet^{gl(\Phi)}(x):H_\bullet(gl(A))\longrightarrow H_\bullet(gl(A'))\qquad H_\bullet^{gl_r(\Phi)}(x):H_\bullet(gl_r(A))\longrightarrow H_\bullet(gl_r(A')) \qquad x\in C
\end{equation} From Proposition \ref{P5.1}, it is clear that the maps in \eqref{5.10j} restrict to maps between the   primitive parts of the Hopf algebras $H_\bullet(gl(A))$ and
$H_\bullet(gl(A'))$, which we continue to denote by $H_\bullet^{gl(\Phi)}(x):Prim(H_\bullet(gl(A)))\longrightarrow Prim(H_\bullet(gl(A')))$. Our next aim is to show that the isomorphism in 
\eqref{5.9iso} is compatible with the maps induced by the measuring $\Phi$.  From \cite[$\S$ 10.2.3]{Lod}, we know that 
\begin{equation}\label{CE5.10}
\theta^A_{n,r}: \wedge^{n+1}gl_r(A)\longrightarrow \tilde C_n(M_r(A))\qquad \theta^A_{n,r}(\alpha_0\wedge \alpha_1\wedge ... \wedge \alpha_n):=\underset{\sigma\in S_n}{\sum}
sgn(\sigma) (\alpha_0,\alpha_{\sigma(1)},...,\alpha_{\sigma(n)})
\end{equation} determines a morphism from the Chevalley-Eilenberg complex of the Lie algebra $gl_r(A)$ to the complex $\tilde C_\bullet(M_r(A))$ (see Remark \ref{R3.2} for notation) computing the cyclic homology of $M_r(A)$. We now have the following result. 

\begin{lem}\label{L5.2}
For each $x\in C$ and $r\geq 1$, the following diagram commutes
\begin{equation}\label{comd5.12}
\begin{CD}
CE_{n+1}(gl_r(A))=\wedge^{n+1}gl_r(A) @>\theta^A_{n,r}>>  \tilde C_n(M_r(A))\\
@VCE_{n+1}^{gl_r(\Phi)}(x)VV @VV\tilde C_n^{M_r(\Phi)}(x)V \\
CE_{n+1}(gl_r(A'))=\wedge^{n+1}gl_r(A') @>\theta^{A'}_{n,r}>>  \tilde C_n(M_r(A'))\\
\end{CD}
\end{equation} where $M_r(\Phi):C\longrightarrow Hom_K(M_r(A),M_r(A'))$ denotes the measuring of matrix algebras induced by $\Phi: C\longrightarrow 
 Hom_K(A,A')$.
\end{lem}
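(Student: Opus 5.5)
The plan is a direct computation on generators, after first pinning down the induced measurings. Recall that $M_r(\Phi)$ is given on a matrix $\alpha=(\alpha_{kl})$ by $M_r(\Phi)(x)(\alpha):=(x(\alpha_{kl}))_{k,l}$. The first step is to record that this is indeed a measuring of algebras. For $\alpha,\beta\in M_r(A)$ we have
\[
x((\alpha\beta)_{kl})=\sum_m x(\alpha_{km}\beta_{ml})=\sum_m x_{(1)}(\alpha_{km})x_{(2)}(\beta_{ml}),
\]
which is the $(k,l)$ entry of $M_r(\Phi)(x_{(1)})(\alpha)\,M_r(\Phi)(x_{(2)})(\beta)$. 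The unit condition is immediate. The Lie measuring $gl_r(\Phi)$ is the same underlying map with $M_r(A)$ viewed as $gl_r(A)$. Since $C$ is cocommutative, Proposition \ref{P1.1} and Remark \ref{R3.2} apply to $M_r(\Phi)$. They give well-defined maps $\tilde C_n^{M_r(\Phi)}(x)$ on $\tilde C_\bullet(M_r(A))=M_r(A)^{\otimes n+1}/(1-t)$, defined by $(\alpha_0,\dots,\alpha_n)\mapsto (x_{(1)}(\alpha_0),\dots,x_{(n+1)}(\alpha_n))$.

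Next I check commutativity of \eqref{comd5.12} on an element $\alpha_0\wedge\alpha_1\wedge\dots\wedge\alpha_n$. Going down and then across gives
\[
\theta^{A'}_{n,r}\bigl(x_{(1)}(\alpha_0)\wedge\dots\wedge x_{(n+1)}(\alpha_n)\bigr)=\sum_{\sigma\in S_n}sgn(\sigma)\,\bigl(x_{(1)}(\alpha_0),x_{(\sigma(1)+1)}(\alpha_{\sigma(1)}),\dots,x_{(\sigma(n)+1)}(\alpha_{\sigma(n)})\bigr).
\]
Going across and then down gives
\[
\sum_{\sigma\in S_n}sgn(\sigma)\,\bigl(x_{(1)}(\alpha_0),x_{(2)}(\alpha_{\sigma(1)}),\dots,x_{(n+1)}(\alpha_{\sigma(n)})\bigr).
\]
Cocommutativity, together with coassociativity, makes the iterated coproduct $\Delta^{n}(x)$ invariant under permutations of the tensor factors. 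So for each fixed $\sigma$ the Sweedler indices can be relabelled, and the two sums agree term by term. This is the same argument used for the antisymmetrization in Proposition \ref{P2.4aj} and Proposition \ref{P2.5gx}.

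The only real point of care is that everything is well defined on the quotients. On the source side, $CE^{gl_r(\Phi)}_{n+1}(x)$ is well defined on $\wedge^{n+1}$ by cocommutativity, which is \cite[Proposition 3.3]{BK1}. On the target side, the map on $\tilde C_\bullet$ is well defined by Remark \ref{R3.2}, so it suffices to verify the identity on representatives in $M_r(A)^{\otimes n+1}$, as above. One further point needs attention. In the formula \eqref{CE5.10}, $\alpha_0$ plays a distinguished role, and $\theta$ is alternating in all $n+1$ variables only modulo $(1-t)$. However, since we check the identity on the explicit representative formula given in \eqref{CE5.10}, no additional symmetrization argument is needed. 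I expect this bookkeeping of Sweedler indices under the permutation to be the main, though mild, obstacle.
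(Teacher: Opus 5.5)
Your proposal is correct and follows the paper's proof. Both evaluate the two composites on $\alpha_0\wedge\alpha_1\wedge\dots\wedge\alpha_n$ and use cocommutativity of $C$ to relabel the Sweedler indices for each fixed $\sigma\in S_n$; your extra checks (that $M_r(\Phi)$ is a measuring, and that checking on representatives suffices because both composites are already well defined) are details the paper leaves implicit.
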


\begin{proof}
We consider $\alpha_0\wedge \alpha_1\wedge ... \wedge \alpha_n\in \wedge^{n+1}gl_r(A)$ and $x\in C$. Since $C$ is cocommutative, we observe that
\begin{equation}
\begin{array}{ll}
(\tilde C_n^{M_r(\Phi)}(x)\circ \theta^A_{n,r})(\alpha_0\wedge \alpha_1\wedge ... \wedge \alpha_n)&=\tilde C_n^{M_r(\Phi)}(x)\left(\underset{\sigma\in S_n}{\sum}
sgn(\sigma) (\alpha_0,\alpha_{\sigma(1)},...,\alpha_{\sigma(n)})\right)\\
& =\underset{\sigma\in S_n}{\sum}
sgn(\sigma) (x_{(1)}(\alpha_0),x_{(2)}(\alpha_{\sigma(1)}),...,x_{(n+1)}(\alpha_{\sigma(n)})) \\
&= \underset{\sigma\in S_n}{\sum}
sgn(\sigma) (x_{(1)}(\alpha_0),x_{\sigma(1)+1}(\alpha_{\sigma(1)}),...,x_{\sigma(n)+1}(\alpha_{\sigma(n)}))\\
&=( \theta^{A'}_{n,r}\circ CE_{n+1}^{gl_r(\Phi)}(x))(\alpha_0\wedge \alpha_1\wedge ... \wedge \alpha_n)\\
\end{array}
\end{equation} This proves the result. 
\end{proof}

By \cite[$\S$ 1.2.1, $\S$ 2.2.10]{Lod}, we know that the quasi-isomorphism of complexes leading to the Morita invariance $HC_\bullet(M_r(A))\cong HC_\bullet(A)$ of cyclic homology is induced by the generalized trace map
\begin{equation}\label{gentr5}
tr^A_{n,r}: \tilde C_n(M_r(A))\longrightarrow \tilde C_n(A)\qquad (\alpha\otimes \beta\otimes ...\otimes \eta)\mapsto \sum \alpha_{i_0i_1}\otimes 
\beta_{i_1i_2}\otimes ...\otimes \eta_{i_ni_0}
\end{equation} where the sum is taken over all possible tuples $(i_0,...,i_n)$ involving the entries of the matrices $\alpha$, $\beta$,..., $\eta$. We can directly verify that the maps in 
\eqref{gentr5} are compatible with a measuring $\Phi:C\longrightarrow Hom_K(A,A')$ of algebras, i.e., $\tilde C_n^{\Phi}(x)\circ tr^A_{n,r}= tr^{A'}_{n,r}\circ\tilde C_n^{M_r(\Phi)}(x)$ for each $x\in C$. We are now ready to show that the isomorphisms in \eqref{5.9iso} are compatible with measurings.

\begin{Thm}\label{T5.4}
Let $C$ be a cocommutative $K$-coalgebra and let $\Phi:C\longrightarrow Hom_K(A,A')$ be a  measuring between  algebras. For each $x\in C$, the following diagram commutes
\begin{equation}
\begin{CD}
Prim(H_\bullet(gl(A)))@>\cong>> HC_{\bullet-1}(A)\\
@VH_\bullet^{gl(\Phi)}(x)VV @VVHC_{\bullet-1}^\Phi(x) V \\
Prim(H_\bullet(gl(A')))@>\cong>> HC_{\bullet-1}(A')\\
\end{CD}
\end{equation} where the isomorphisms in the horizontal arrows are as given by \eqref{5.9iso}.
\end{Thm}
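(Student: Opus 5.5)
The isomorphism \eqref{5.9iso} is, following \cite[$\S$ 10.2]{Lod}, the restriction to primitives of the composite map
\begin{equation*}
H_{n+1}(gl(A))=\varinjlim_r H_{n+1}(gl_r(A))\xrightarrow{\ \theta^A_{n,r}\ } \varinjlim_r HC_n(M_r(A))\xrightarrow{\ tr^A_{n,r}\ } HC_n(A).
\end{equation*}
Here the first arrow is induced on homology by the chain maps $\theta^A_{n,r}$ of \eqref{CE5.10}, and the second by the generalized traces \eqref{gentr5}, both computed with the complexes $\tilde C_\bullet$ of Remark \ref{R3.2}. The plan is to show that this whole composite is compatible with the maps induced by $\Phi$ for each $x\in C$, and then restrict to primitives. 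The restriction is legitimate because Proposition \ref{P5.1} shows that $H^{gl(\Phi)}_\bullet(x)$ carries $Prim(H_\bullet(gl(A)))$ into $Prim(H_\bullet(gl(A')))$.

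The key steps, in order, are as follows.

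\begin{enumerate}
\item For fixed $r$, Lemma \ref{L5.2} gives $\tilde C_n^{M_r(\Phi)}(x)\circ\theta^A_{n,r}=\theta^{A'}_{n,r}\circ CE^{gl_r(\Phi)}_{n+1}(x)$ at the chain level.

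\item Check the trace compatibility $\tilde C_n^{\Phi}(x)\circ tr^A_{n,r}=tr^{A'}_{n,r}\circ \tilde C_n^{M_r(\Phi)}(x)$. On $(\alpha,\beta,\dots,\eta)$ the left side gives
\begin{equation*}
\sum x_{(1)}(\alpha_{i_0i_1})\otimes x_{(2)}(\beta_{i_1i_2})\otimes\dots\otimes x_{(n+1)}(\eta_{i_ni_0}),
\end{equation*}
and since $M_r(\Phi)(x)$ acts entrywise, the right side gives the same sum. By Remark \ref{R3.2} the maps $\tilde C^\Phi_\bullet(x)$ compute $HC^\Phi_\bullet(x)$, so the square with the trace commutes on cyclic homology.

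\item Verify that the inclusions $gl_r(A)\hookrightarrow gl_{r+1}(A)$ and $M_r(A)\hookrightarrow M_{r+1}(A)$ commute with $gl_r(\Phi)(x)$ and $M_r(\Phi)(x)$. This holds because $x(0)=0$, so padding by zeros is preserved. Together with the compatibility of $\theta$ and $tr$ with the stabilization maps, this lets us pass to the colimit: $H_\bullet^{gl(\Phi)}(x)$ on $H_\bullet(gl(A))=\varinjlim_r H_\bullet(gl_r(A))$ is the colimit of the $H_\bullet^{gl_r(\Phi)}(x)$. Composing the two commutative squares then gives a commutative diagram from $H_{\bullet}(gl(A))$ to $HC_{\bullet-1}(A)$.
\end{enumerate}

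The main obstacle is a bookkeeping point about which map \eqref{5.9iso} really is. In \cite[$\S$ 10.2]{Lod} the isomorphism on primitives is obtained from the map $H_\bullet(gl(A))\to HC_{\bullet-1}(A)$ above, which kills decomposables and is an isomorphism on primitives. One could instead describe the isomorphism through an inverse map $HC_{\bullet-1}(A)\to Prim$, in which case one should argue differently. Since the horizontal arrows in the diagram are isomorphisms, commutativity of the square with the forward maps $Prim\to HC_{\bullet-1}$ is equivalent to commutativity with their inverses, so it suffices to use the forward composite and its restriction to primitives. I would also confirm that the sign and normalization conventions for $\theta$ (possibly a factor like $(-1)^n$ or $1/n!$) are uniform in $A$ and $A'$, so that they do not affect commutativity.
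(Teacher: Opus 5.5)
Your proposal is correct and follows the paper's proof. The paper likewise combines Lemma~\ref{L5.2} with the entrywise trace compatibility into a commutative diagram for each $r$, and passes to the colimit as $r\to\infty$. It then identifies \eqref{5.9iso} as the restriction of the colimit of $tr^A_{n,r}\circ\theta^A_{n,r}$ to primitives, and uses Proposition~\ref{P5.1} to restrict $H^{gl(\Phi)}_\bullet(x)$ to primitive parts; your explicit checks of the trace identity and the stabilization maps just fill in steps the paper calls clear.
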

\begin{proof}
From Lemma \ref{L5.2} and the discussion above, it follows that  we have commutative diagrams for $x\in C$ and $r\geq 1$
\begin{equation}\label{comd5.16}
\begin{CD}
CE_{n+1}(gl_r(A))=\wedge^{n+1}gl_r(A) @>\theta^A_{n,r}>>  \tilde C_n(M_r(A))@>tr^A_{n,r}>>\tilde C_n(A) \\
@VCE_{n+1}^{gl_r(\Phi)}(x)VV @VV\tilde C_n^{M_r(\Phi)}(x)V  @VV\tilde C_n^\Phi(x)V \\
CE_{n+1}(gl_r(A'))=\wedge^{n+1}gl_r(A') @>\theta^{A'}_{n,r}>>  \tilde C_n(M_r(A'))@>tr_{n,r}^{A'}>> \tilde C_n(A')\\
\end{CD}
\end{equation}  It is clear that the commutativity in \eqref{comd5.16} passes to the colimit as $r\longrightarrow \infty$. We know (see \cite[$\S$ 10.2.4]{Lod}) that the map $H_\bullet(gl(A))\longrightarrow HC_{\bullet-1}(A) $ which restricts to the isomorphism in \eqref{5.9iso} is obtained from the colimit of the maps
$tr^A_{n,r}\circ \theta^A_{n,r}$ as $r\longrightarrow \infty$. Further, we have noted before that the map $H_\bullet^{gl(\Phi)}(x):H_\bullet(gl(A))\longrightarrow H_\bullet(gl(A'))$ restricts to the primitive parts of the Hopf algebras $H_\bullet(gl(A))$ and $H_\bullet(gl(A'))$. The result is now clear. 
\end{proof}

In the rest of this section, our aim is to prove a result similar to Theorem \ref{T5.4} involving Hochschild homology and Leibniz homology. Accordingly, for the rest of this section, let $(\mathfrak g,[\_\_\,\_\_])$ and $(\mathfrak g',[\_\_,\_\_]')$ be   Leibniz algebras. We recall that the Leibniz homology $HL_\bullet(\mathfrak g)$ of $\mathfrak g$  is computed by means of the complex $CL_\bullet(\mathfrak g)$ (see Loday \cite[$\S$ 6.3]{Lod3}) given by setting $CL_n(\mathfrak g):=\mathfrak g^{\otimes n}$ along with differentials given by
\begin{equation}\label{cl5.17}
d_{CL}:CL_n(\mathfrak g)=\mathfrak g^{\otimes n}\longrightarrow \mathfrak g^{\otimes n-1}=CL_{n-1}(\mathfrak g)\qquad d_{CL}(\alpha_1, ...,\alpha_n):=\underset{1\leq i<j\leq n}{\sum}(-1)^{j}  ( \alpha_1, ...,\alpha_{i-1}, [\alpha_i,\alpha_j], ...,\hat{\alpha}_j, ..., \alpha_n)
\end{equation} 
For graded vector spaces $M=\oplus_{i\geq 0}M_i$, $N=\oplus_{i\geq 0}N_i$ with $M_0=N_0=K$, we recall (see  \cite[$\S$ 6.7]{Lod3}) the tensor product $M\boxtimes N=\oplus_{i\geq 0}(M\boxtimes N)_i$ which is given in degree $n$ by the direct  sum of the $2^n$ components of the form $X_{i_1}\otimes 
Y_{i_2}\otimes X_{i_3}\otimes Y_{i_4}\otimes ...$ where $X=M$ and $Y=N$, or $X=N$ and $Y=M$ with $i_j\geq 1$ and $\sum_ji_j=n$.  As with Lie algebra homology, the diagonal map $\mathfrak g\longrightarrow \mathfrak g\oplus \mathfrak g$ of Leibniz algebras leads to a coproduct $\Delta_{\mathfrak g}:HL_\bullet(\mathfrak g)\longrightarrow HL_\bullet(\mathfrak g)\boxtimes HL_\bullet(\mathfrak g)$ on Leibniz homology (see Loday \cite[$\S$ 6.9]{Lod3}). We first need the following general result for measurings of Leibniz algebras.

\begin{thm}\label{P5.5}
Let $(\mathfrak g,[\_\_\,\_\_])$ and $(\mathfrak g',[\_\_,\_\_]')$ be   Leibniz algebras. Let $C$ be a cocommutative coalgebra and let 
$\Psi: C\longrightarrow Hom_K(\mathfrak g,\mathfrak g')$ be a measuring between Leibniz algebras. For each $x\in C$, the maps
\begin{equation}
CL_n^\Psi(x):CL_n(\mathfrak g)\longrightarrow CL_n(\mathfrak g')\qquad (\alpha_1,...,\alpha_n)\mapsto x(\alpha_1,...,\alpha_n):= (x_{(1)}(\alpha_1), ... , x_{(n)}(\alpha_n))
\end{equation} induce a morphism of complexes from $CL_\bullet(\mathfrak g)$ to $CL_\bullet(\mathfrak g')$. In particular, there is an induced map
$HL^\Psi_\bullet(x):HL_\bullet(\mathfrak g)\longrightarrow HL_\bullet(\mathfrak g')$ on Leibniz homologies for each $x\in C$. Additionally, for each $x\in C$, the following diagram commutes
\begin{equation}\label{5.19cdr}
\begin{CD}
HL_\bullet(\mathfrak g) @>\Delta_{\mathfrak g}>> HL_\bullet(\mathfrak g) \boxtimes HL_\bullet(\mathfrak g) \\
@VHL^\Psi_\bullet(x)VV @VVHL^\Psi_\bullet(x_{(1)})\boxtimes HL^\Psi_\bullet(x_{(2)})V\\
HL_\bullet(\mathfrak g') @>\Delta_{\mathfrak g'}>> HL_\bullet(\mathfrak g') \boxtimes HL_\bullet(\mathfrak g') \\
\end{CD}
\end{equation}
where we write the coproduct $\Delta_C(x):=x_{(1)}\otimes x_{(2)}$ by suppressing the summation signs. 
\end{thm}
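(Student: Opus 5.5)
The plan follows the pattern already used for Proposition \ref{P2.4aj} and Proposition \ref{P5.1}. It has two parts: first the chain-map property of $CL^\Psi_\bullet(x)$, then compatibility with the coproduct.

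\textbf{Chain map.} Fix $x\in C$ and $(\alpha_1,\dots,\alpha_n)\in CL_n(\mathfrak g)$. Apply $CL^\Psi_{n-1}(x)$ to each summand of $d_{CL}(\alpha_1,\dots,\alpha_n)$ in \eqref{cl5.17}.
\begin{itemize}
\item The summand indexed by $i<j$ has $n-1$ entries, so it picks up $x_{(1)},\dots,x_{(n-1)}$.
\item On the bracket slot $[\alpha_i,\alpha_j]$ the factor $x_{(i)}$ acts, and the measuring property of Definition \ref{D5.1} gives $x_{(i)}([\alpha_i,\alpha_j])=[x_{(i)}(\alpha_i),x_{(i+1)}(\alpha_j)]'$ after renumbering, so the summand now involves $n$ Sweedler factors.
\item By cocommutativity (and coassociativity) of $C$, the $n$-fold iterated coproduct $x_{(1)}\otimes\cdots\otimes x_{(n)}$ is invariant under permutations of the tensor factors. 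I will reindex so that $\alpha_k$ is acted on by $x_{(k)}$ for every $k$, exactly as in the proof of Proposition \ref{P2.4aj}.
\item After this relabelling, the summand becomes the $(i,j)$ term of $d_{CL}(x_{(1)}(\alpha_1),\dots,x_{(n)}(\alpha_n))$. Summing over $i<j$ with the signs $(-1)^j$ gives $CL^\Psi_{n-1}(x)\circ d_{CL}=d_{CL}\circ CL^\Psi_n(x)$.
\end{itemize}
This yields the induced map $HL^\Psi_\bullet(x)$.

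\textbf{Coproduct.} Following Loday \cite[$\S$ 6.9]{Lod3}, $\Delta_{\mathfrak g}$ is induced by the diagonal $\delta_{\mathfrak g}:\mathfrak g\to\mathfrak g\oplus\mathfrak g$ together with the canonical decomposition of $CL_n(\mathfrak g\oplus\mathfrak g)=(\mathfrak g\oplus\mathfrak g)^{\otimes n}$. Expanding $\delta_{\mathfrak g}^{\otimes n}(\alpha_1,\dots,\alpha_n)$ gives $2^n$ terms. Each term is a choice of component $(\alpha_1^{(i_1)},\dots,\alpha_n^{(i_n)})$, and it is sent to a component of $\boxtimes$ of the form $X_{i_1}\otimes Y_{i_2}\otimes\cdots$. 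The key point is that no entries are permuted: the word $(i_1,\dots,i_n)$ is cut into maximal blocks with constant superscript, and each block is read in its original order. Hence there are no signs to track, unlike in Proposition \ref{P5.1}. I would check the square \eqref{5.19cdr} at the chain level, term by term:
\begin{itemize}
\item The left-then-bottom path applies $\delta_{\mathfrak g'}^{\otimes n}$ to $(x_{(1)}(\alpha_1),\dots,x_{(n)}(\alpha_n))$.
\item On the right-hand path, the block from the first copy of $\mathfrak g$ receives the Sweedler factors of $x_{(1)}$ and the block from the second copy receives those of $x_{(2)}$, where the two halves are formed with respect to the outer $\boxtimes$ structure.
\item By coassociativity and cocommutativity, the total assignment of Sweedler factors agrees with $x_{(1)},\dots,x_{(n)}$ in the original positions, which gives the matching term.
\item Both sides are chain-level formulas compatible with the differentials, so the identity passes to homology.
\end{itemize}

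\textbf{Main obstacle.} The hardest step is making the second part precise. This requires the exact chain-level description of $\Delta_{\mathfrak g}$ through $HL_\bullet(\mathfrak g\oplus\mathfrak g)\to HL_\bullet(\mathfrak g)\boxtimes HL_\bullet(\mathfrak g)$, which in Loday is a K\"unneth-type identification that may only be a quasi-isomorphism. I would first try to work with the explicit chain map $CL_\bullet(\mathfrak g\oplus\mathfrak g)\to CL_\bullet(\mathfrak g)\boxtimes CL_\bullet(\mathfrak g)$ given by splitting words into alternating blocks. I would then verify naturality of that map with respect to the measuring $\Psi\oplus\Psi$, which is a measuring $C\to Hom_K(\mathfrak g\oplus\mathfrak g,\mathfrak g'\oplus\mathfrak g')$. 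Naturality of the diagonal, namely $(\Psi(x)\oplus\Psi(x))\circ\delta_{\mathfrak g}=\delta_{\mathfrak g'}\circ\Psi(x)$, is immediate.
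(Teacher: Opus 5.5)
Your chain-map argument is the paper's own computation: apply the measuring on the bracket slot, then use cocommutativity to return the Sweedler factor acting on $\alpha_j$ to position $j$. For the coproduct square the paper says only that it ``can be verified in a manner similar to Proposition \ref{P5.1}'', so your term-by-term plan is the intended route. The gap is in how you propose to make this step rigorous. Your claim that the block-splitting formulas are ``compatible with the differentials'' is false, so the chain-level comparison does not descend to homology.

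In Proposition \ref{P5.1} the term-by-term check is legitimate because $\wedge^\bullet(\mathfrak g\oplus\mathfrak h)\cong\wedge^\bullet\mathfrak g\otimes\wedge^\bullet\mathfrak h$ is an isomorphism of Chevalley--Eilenberg complexes: letters can be reordered at the cost of a sign, and cross brackets vanish. Splitting $(\mathfrak g\oplus\mathfrak h)^{\otimes n}$ into alternating blocks is only an isomorphism of graded vector spaces. The reason is that $d_{CL}$ brackets non-adjacent letters from the same summand that sit in different blocks.

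For example, take $g_1,g_2\in\mathfrak g$ and $h\in\mathfrak h$. Then \eqref{cl5.17} gives $d_{CL}(g_1,h,g_2)=-([g_1,g_2],h)$, which is nonzero whenever $[g_1,g_2]\neq 0\neq h$. By contrast, the blockwise differential on the component $\mathfrak g\otimes\mathfrak h\otimes\mathfrak g$ of $CL_\bullet(\mathfrak g)\boxtimes CL_\bullet(\mathfrak h)$ is zero, since $d_{CL}$ vanishes on $CL_1$. Concatenating two blocks, $CL_\bullet(\mathfrak g)\otimes CL_\bullet(\mathfrak h)\to CL_\bullet(\mathfrak g\oplus\mathfrak h)$, is a chain map, but three alternating blocks already fail. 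So the explicit map you plan to fall back on is not a chain map at all; the problem is not merely that it might only be a quasi-isomorphism.

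Loday's identification $HL_\bullet(\mathfrak g\oplus\mathfrak g)\cong HL_\bullet(\mathfrak g)\boxtimes HL_\bullet(\mathfrak g)$ is therefore a genuine homology-level theorem. To finish, you need its actual construction. You must then check that it intertwines $HL^{\Psi\oplus\Psi}_\bullet(x)$ with the map acting on an alternating word of $k$ blocks by $x_{(1)}\otimes\cdots\otimes x_{(k)}$, which is how $HL^\Psi_\bullet(x_{(1)})\boxtimes HL^\Psi_\bullet(x_{(2)})$ must be read. If that identification is realized by natural multilinear operations on letters, your cocommutativity reindexing applies. If it comes from an abstract argument, you must prove separately that it is natural with respect to measurings, which are not Leibniz morphisms. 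The paper's one-line appeal to Proposition \ref{P5.1} does not supply this step either.
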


\begin{proof} We consider $(\alpha_1,...,\alpha_n)\in\mathfrak g^{\otimes n}$. Since $C$ is cocommutative, we see that for $x\in C$, we have
\begin{equation}
\begin{array}{ll}
x(d_{CL}(\alpha_1,...,\alpha_n))&=x\left(\underset{1\leq i<j\leq n}{\sum}(-1)^{j}  ( \alpha_1, ...,[\alpha_i,\alpha_j], \alpha_{i+1},...,\hat{\alpha}_j, ..., \alpha_n)\right)\\
&=  \underset{1\leq i<j\leq n}{\sum}(-1)^{j}  ( x_{(1)}(\alpha_1), ...,x_{(i)}([\alpha_i,\alpha_j]), x_{(i+1)}(\alpha_{i+1}), ...,\hat{\alpha}_j, ..., x_{(n-1)}(\alpha_n))\\
&=  \underset{1\leq i<j\leq n}{\sum}(-1)^{j}  ( x_{(1)}(\alpha_1), ...,[x_{(i)}(\alpha_i),x_{(i+1)}(\alpha_j)], x_{(i+2)}(\alpha_{i+1}), ...,\hat{\alpha}_j, ..., x_{(n)}(\alpha_n))\\
&=  \underset{1\leq i<j\leq n}{\sum}(-1)^{j}  ( x_{(1)}(\alpha_1), ...,[x_{(i)}(\alpha_i),x_{(j)}(\alpha_j)], x_{(i+1)}(\alpha_{i+1}), ...,\hat{\alpha}_j, ..., x_{(n)}(\alpha_n))\\
&=d_{CL}(x_{(1)}(\alpha_1), ... , x_{(n)}(\alpha_n))\\
\end{array}
\end{equation}
It follows that each $CL_\bullet^\Psi(x):CL_\bullet(\mathfrak g)\longrightarrow CL_\bullet(\mathfrak g')$ is a morphism of complexes, inducing maps on homology groups. The commutativity of \eqref{5.19cdr} can be verified in a manner similar to Proposition \ref{P5.1}.
\end{proof}

We now set $\mathfrak g:=gl(A)$, $\mathfrak g':=gl(A')$ and $\Psi:=gl(\Phi):C\longrightarrow Hom_K(gl(A),gl(A'))$. It is clear that $gl(\Phi)$ is also a measuring of Leibniz algebras.  We know that the Leibniz homology $HL_\bullet(gl(A))$ of $gl(A)$  forms a Hopf algebra (see \cite[$\S$ III.2]{Cuv}). By the commutativity of \eqref{5.19cdr}, it follows that the maps
$HL^{gl(\Phi)}_\bullet(x):HL_\bullet(gl(A))\longrightarrow HL_\bullet(gl(A'))$ for $x\in C$ restrict to $HL^{gl(\Phi)}_\bullet(x):Prim(HL_\bullet(gl(A)))\longrightarrow Prim(HL_\bullet(gl(A')))$ on the respective primitive parts of these Hopf algebras. We will now show that the isomorphisms (see \cite[Th\'{e}or\`{e}me III.3]{Cuv1}) 
\begin{equation}\label{5.9isob}
Prim(HL_\bullet(gl(A)))\cong HH_{\bullet-1}(A)
\end{equation}
are compatible with measurings. This will require several steps. First, we recall (see \cite[$\S$ 10.2.10]{Lod})  that the complex $CL_\bullet(gl_r(A))$ has a quotient $\overline{CL}_\bullet(gl(A))$ determined by epimorphisms 
\begin{equation}\label{5.2f1}
CL_n(gl_r(A)):=(gl_r(A))^{\otimes n}\longrightarrow \overline{CL}_n(gl_r(A))=(gl_r(A))^{\otimes n}_{gl_r(K)}
\end{equation} whose target is the coinvariant $(gl_r(A))^{\otimes n}_{gl_r(K)}\cong ((gl_r(K))^{\otimes n}\otimes A^{\otimes n})_{gl_r(K)}$  produced by combining the adjoint action of  
$gl_r(K)$ on $(gl_r(K))^{\otimes n}$  along with the trivial action on $A^{\otimes n}$. Accordingly, the maps in \eqref{5.2f1} are compatible with measurings and for each $x\in C$ we have a commutative diagram 
\begin{equation}\label{5.2f2}
\begin{CD}
CL_n(gl_r(A))=(gl_r(A))^{\otimes n}@>>> \overline{CL}_n(gl_r(A))=(gl_r(A))^{\otimes n}_{gl_r(K)}\\
@VCL_n^{\Psi_r}(x)VV @VV\overline{CL}_n^{\Psi_r}(x)V \\
CL_n(gl_r(A'))=(gl_r(A'))^{\otimes n}@>>> \overline{CL}_n(gl_r(A'))=(gl_r(A'))^{\otimes n}_{gl_r(K)}\\
\end{CD}
\end{equation} where $\Psi_r:=gl_r(\Phi):C\longrightarrow Hom_K(gl_r(A),gl_r(A'))$ is the measuring of Leibniz algebras induced by $\Phi$. Since the maps in \eqref{5.2f1} are epimorphisms, it follows that the maps $\overline{CL}_n^{\Psi_r}(x)$ in \eqref{5.2f2} determine a morphism  $\overline{CL}_\bullet^{\Psi_r}(x):\overline{CL}_\bullet(gl_r(A))\longrightarrow \overline{CL}(gl_r(A'))$ of complexes. We also know (see \cite[$\S$ 10.6.6]{Lod}) that the horizontal arrows in \eqref{5.2f2} induce quasi-isomorphisms of complexes. It follows that the Leibniz homology $HL_\bullet(gl(A))$ can be computed by taking the colimit of the complexes $ \overline{CL}_\bullet(gl_r(A))$ as $r\longrightarrow \infty$. 

\smallskip
We recall now (see \cite[$\S$ 10.6.7]{Lod}) the isomorphisms (for $r\geq n$)
\begin{equation}\label{iso5.g1}
\Theta^A_n:  \overline{CL}_n(gl_r(A))=(gl_r(A))^{\otimes n}_{gl_r(K)}=((gl_r(K))^{\otimes n}\otimes A^{\otimes n})_{gl_r(K)} \xrightarrow{\qquad\cong \qquad} K[S_n]\otimes A^{\otimes n}
\end{equation} This leads to a complex $L_\bullet(A)$ defined by setting $L_n(A):=K[S_{n+1}]\otimes A^{\otimes n+1}$ whose differential is determined by the isomorphisms in
\eqref{iso5.g1}. It follows that $HL_\bullet(gl(A))\cong H_{\bullet-1}(L_\bullet(A))$. From \eqref{5.2f2} and the isomorphisms in \eqref{iso5.g1},  we have induced maps $H_{n-1}^{\Phi}(x): H_{n-1}(L_\bullet(A))\longrightarrow  H_{n-1}(L_\bullet(A'))$ that fit into the commutative diagram 
\begin{equation}\label{step5.1g}
\begin{CD}
HL_n(gl(A)) @>\cong>> H_{n-1}(L_\bullet(A))\\
@VHL_n^{\Psi}(x)VV @VVH_{n-1}^{\Phi}(x)V\\
HL_n(gl(A')) @>\cong>> H_{n-1}(L_\bullet(A'))\\
\end{CD}
\end{equation}  We now let $U_n$ denote the conjugacy class of  the cycle $(1,2,...,n)$ in the symmetric group $S_n$. For any $\sigma\in U_n$,  let $\omega(\sigma)$ be the unique element in $S_n$ such that $\sigma=\omega(\sigma)(1,2,...,n)\omega(\sigma)^{-1}$ and $\omega(\sigma)(1)=1$.  As before, we know that $S_n$ has a left action on $A^{\otimes n}$ given by $\tau\cdot (a_1,...,a_n):=(a_{\tau^{-1}(1)},...,a_{\tau^{-1}(n)})$ for $\tau
\in S_n$ and $(a_1,...,a_n)\in A^{\otimes n}$. 

\smallskip We will now recall the  presimplicial module $V_\bullet(A)$ (see \cite{Cuv1}) defined by setting $V_n(A):=K[U_{n+1}]\otimes A^{\otimes n+1}$. As in Section 2, we denote by
$d_i$ the face maps of the simplicial module $C_\bullet(A)$.  First, we recall (see Cuvier \cite[Th\'{e}or\`{e}me III.1]{Cuv}) that $K[U_{n+1}]$ can be made into a presimplicial module, and by abuse of notation, we continue to denote its face maps by $d_i$. Then, the face maps of the presimplicial module $V_\bullet(A)$, still denoted by $d_i$, are determined by (see \cite[Chapitre III]{Cuv})
\begin{equation}\label{eqcuv}
d_i:V_{n}(A)=K[U_{n+1}]\otimes A^{\otimes n+1}\longrightarrow K[U_n]\otimes A^{\otimes n}=V_{n-1}(A) \qquad (\sigma \otimes (a_0,...,a_n))\mapsto d_i\sigma\otimes (\omega(d_i\sigma )\cdot d_i(\omega(\sigma)^{-1}\cdot (a_0,...,a_n)))
\end{equation} The homology of the presimplicial module $V_\bullet(A)$ is typically denoted by $HZ_\bullet(A)$ (see \cite{Cuv1}).  Further, we know  that the canonical inclusions $\iota^A_n: C_n(A)=A^{\otimes n+1}\longrightarrow V_n(A)=K[U_{n+1}]\otimes A^{\otimes n+1}$ induce a quasi-isomorphism of   complexes  and 
$\iota^A_\bullet:C_\bullet(A)\longrightarrow V_\bullet(A)$ has a retraction given by (see \cite[Th\'{e}or\`{e}me III.4]{Cuv1})
\begin{equation}\label{rtr4}
\zeta^A_n:V_n(A)\longrightarrow C_n(A)\qquad \sigma\otimes (a_0,...,a_n)\mapsto sgn(\omega(\sigma))\omega(\sigma)^{-1}\cdot (a_0,...,a_n)
\end{equation} 
 We now have the following result. 

\begin{thm}\label{P5.dc}
 Let $C$ be a cocommutative coalgebra and let $\Phi:C\longrightarrow Hom_K(A,A')$ be a measuring between  algebras. For each  $x\in C$, the maps
 \begin{equation}
 V_n^\Phi(x):V_n(A)\longrightarrow V_n(A') \qquad \sigma\otimes (a_0,...,a_n)\mapsto \sigma\otimes x(a_0,...,a_n)=\sigma\otimes (x_{(1)}(a_0),...,x_{(n+1)}(a_n))
 \end{equation} induce a morphism of presimiplicial modules. Further, the following diagrams commute
 \begin{equation}\label{525cd}
 \begin{array}{lll}
 \begin{CD}
 C_n(A)@>\iota^A_n>> V_n(A)\\
 @VC_n^\Phi(x)VV @VV{V_n^\Phi(x)}V \\
  C_n(A')@>\iota^{A'}_n>> V_n(A')\\
 \end{CD} &\qquad\qquad &\begin{CD}
 V_n(A)@>\zeta^A_n>> C_n(A)\\
 @V{V_n^\Phi(x)}VV @VVC_n^\Phi(x) V \\
  V_n(A')@>\zeta^{A'}_n>> C_n(A')\\
 \end{CD}\\
 \end{array}
 \end{equation}
\end{thm}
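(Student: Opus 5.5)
The plan is to reduce everything to one observation, already used repeatedly above (e.g.\ in \eqref{4.11x} and in the proofs of Propositions \ref{P2.4aj} and \ref{P1.1}). Since $C$ is cocommutative, the map $C_n^\Phi(x)$ commutes with the left action of the symmetric group $S_{n+1}$ on $A^{\otimes n+1}$:
\begin{equation*}
C_n^\Phi(x)(\tau\cdot(a_0,...,a_n))=\tau\cdot C_n^\Phi(x)(a_0,...,a_n),\qquad \tau\in S_{n+1}.
\end{equation*}
The reason is that the tensor factors $x_{(1)},...,x_{(n+1)}$ of the iterated coproduct may be permuted freely. The other ingredient is that $C_\bullet^\Phi(x)$ commutes with the face maps $d_i$ of $C_\bullet(A)$; this is Proposition \ref{P1.1}, which in turn uses \cite[Proposition 2.2]{BK1}.

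First, I will check that $V_\bullet^\Phi(x)$ commutes with the face maps \eqref{eqcuv}. Take $\sigma\otimes(a_0,...,a_n)\in V_n(A)$. Then $d_iV_n^\Phi(x)(\sigma\otimes(a_0,...,a_n))$ equals
\begin{equation*}
d_i\sigma\otimes\omega(d_i\sigma)\cdot d_i\bigl(\omega(\sigma)^{-1}\cdot C_n^\Phi(x)(a_0,...,a_n)\bigr).
\end{equation*}
The computation then proceeds in three moves:
\begin{itemize}
\item use $S_{n+1}$-equivariance to move $C_n^\Phi(x)$ past $\omega(\sigma)^{-1}$;
\item use $d_iC_n^\Phi(x)=C_{n-1}^\Phi(x)d_i$ to move it past $d_i$;
\item use $S_n$-equivariance to move it past $\omega(d_i\sigma)$.
\end{itemize}
This produces $V_{n-1}^\Phi(x)d_i(\sigma\otimes(a_0,...,a_n))$. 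The point to be careful about is that the face maps on the $K[U_{n+1}]$-factor are entirely independent of $A$. Since $V_n^\Phi(x)$ is the identity on that factor, no compatibility condition arises there.

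Next, the left square in \eqref{525cd}. The inclusion $\iota^A_n$ sends $(a_0,...,a_n)$ to $(1,2,...,n+1)\otimes(a_0,...,a_n)$; this is the standard cycle, with $\omega=\mathrm{id}$. Commutativity of this square is immediate from the definition of $V_n^\Phi(x)$, which acts only on the tensor factor.

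For the right square, write $\zeta^{A'}_nV_n^\Phi(x)(\sigma\otimes\underline a)$ as
\begin{equation*}
sgn(\omega(\sigma))\,\omega(\sigma)^{-1}\cdot C_n^\Phi(x)(\underline a).
\end{equation*}
Applying equivariance once more, this equals $C_n^\Phi(x)\zeta^A_n(\sigma\otimes\underline a)$.

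The only genuinely delicate step is the equivariance claim itself. It needs the iterated coproduct to be symmetric, i.e.\ $\tau$-invariant, so I will justify it with the standard fact that for cocommutative $C$ the element $\Delta^{(n)}(x)\in C^{\otimes n+1}$ is invariant under all permutations of the tensor factors. I will then apply it uniformly in each of the steps above.
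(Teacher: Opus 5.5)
Your proposal is correct and follows essentially the same route as the paper. The paper's proof rests on the same two facts you isolate: the $S_{n+1}$-equivariance $x(\tau\cdot(a_0,...,a_n))=\tau\cdot x(a_0,...,a_n)$ coming from cocommutativity, and the compatibility of $C^\Phi_\bullet(x)$ with the face maps; it then declares the result clear from the definitions \eqref{eqcuv} and \eqref{rtr4}. Your write-up just makes explicit the three-step face-map computation and the one-line checks of the two squares that the paper leaves to the reader.
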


\begin{proof} Since $C$ is cocommutative, we note that 
\begin{equation}
x(\tau\cdot  (a_0,...,a_n))= \tau\cdot (x(a_0,...,a_n)) 
\end{equation} for any $x\in C$, $(a_0,...,a_n)\in C_n(A)$ and any permutation $\tau\in S_{n+1}$. We know already that the measurings are well behaved with respect to the face maps of the simplicial modules $C_\bullet(A)$ and $C_\bullet(A')$. From the definitions in \eqref{eqcuv} and \eqref{rtr4}, the result is now clear. 

\end{proof}

The following result now shows that the isomorphisms in \eqref{5.9isob} are compatible with measurings.

\begin{Thm}\label{T5.7v}
Let $C$ be a cocommutative $K$-coalgebra and let $\Phi:C\longrightarrow Hom_K(A,A')$ be a  measuring between  algebras. For each $x\in C$, the following diagram commutes
\begin{equation}\label{5.32cd}
\begin{CD}
Prim(HL_\bullet(gl(A)))@>\cong>> HH_{\bullet-1}(A)\\
@VHL_\bullet^{gl(\Phi)}(x)VV @VVHH_{\bullet-1}^\Phi(x) V \\
Prim(HL_\bullet(gl(A')))@>\cong>> HH_{\bullet-1}(A')\\
\end{CD}
\end{equation} where the horizontal isomorphisms are as given by \eqref{5.9isob}.
\end{Thm}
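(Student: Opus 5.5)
The plan is to factor the isomorphism \eqref{5.9isob} into the chain of maps already set up in this section and to check that each link commutes with the maps induced by the measuring. Following Cuvier, the isomorphism $Prim(HL_\bullet(gl(A)))\cong HH_{\bullet-1}(A)$ is obtained as follows. We have $HL_n(gl(A))\cong H_{n-1}(L_\bullet(A))$ via the colimit of the coinvariant complexes $\overline{CL}_\bullet(gl_r(A))$ and the isomorphisms $\Theta^A_n$ of \eqref{iso5.g1}. Under this identification, the primitive part corresponds to the homology of the subcomplex spanned by the cyclic permutations, namely $K[U_{n}]\otimes A^{\otimes n}$ inside $K[S_n]\otimes A^{\otimes n}$, i.e.\ the presimplicial module $V_{\bullet}(A)$, whose homology is $HZ_\bullet(A)$. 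Finally, $HZ_\bullet(A)\cong HH_\bullet(A)$ via the quasi-isomorphism $\iota^A_\bullet$ and its retraction $\zeta^A_\bullet$.

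First, I use the commutative square \eqref{step5.1g}. It already shows that $HL_n^{gl(\Phi)}(x)$ corresponds to a map $H^\Phi_{n-1}(x)$ on $H_{n-1}(L_\bullet(A))$. I then make this map explicit at chain level. Since $\Theta^A_n$ sends the class of $(e_{i_1j_1}a_1)\otimes\cdots\otimes(e_{i_nj_n}a_n)$ to $\sum$ (permutation determined by the matrix indices)$\otimes(a_1,\dots,a_n)$, and $gl_r(\Phi)$ acts entrywise, the induced map on $L_{n-1}(A)=K[S_n]\otimes A^{\otimes n}$ is $\sigma\otimes(a_1,\dots,a_n)\mapsto\sigma\otimes(x_{(1)}(a_1),\dots,x_{(n)}(a_n))$. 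Cocommutativity of $C$ is what lets the permutation factor pass through (cf.\ \eqref{4.11x}). This map visibly preserves the summand $K[U_n]\otimes A^{\otimes n}=V_{n-1}(A)$, where it restricts to $V^\Phi_{n-1}(x)$ of Proposition \ref{P5.dc}.

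Next, I use the fact (Proposition \ref{P5.1}-type argument, i.e.\ Proposition \ref{P5.5}) that $HL^{gl(\Phi)}_\bullet(x)$ preserves primitives. Combined with Cuvier's identification of $Prim$ with the image of $HZ_{\bullet-1}(A)$, this gives a commutative square relating $Prim(HL_\bullet(gl(A)))\to Prim(HL_\bullet(gl(A')))$ with $HZ^\Phi_{\bullet-1}(x)$, the map induced by $V^\Phi_\bullet(x)$. Then the right-hand square of \eqref{525cd}, or equivalently the left one since $\iota$ is a quasi-isomorphism, shows that under $HZ_\bullet\cong HH_\bullet$ the map $HZ^\Phi(x)$ corresponds to $HH^\Phi(x)$. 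Pasting the squares yields \eqref{5.32cd}.

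The main obstacle is the middle step. I must verify, at the level of the explicit isomorphism $\Theta^A_n$ and the colimit over $r$, that the map induced from $\overline{CL}^{\Psi_r}_\bullet(x)$ is exactly $id_{K[S_n]}\otimes x$, compatibly with Cuvier's splitting of $L_\bullet(A)$ into conjugacy-class pieces. I would check this on generators $e_{i_1j_1}a_1\otimes\cdots$, using that $gl_r(\Phi)(x)(e_{ij}a)=e_{ij}\,x(a)$ and that $\Delta^{n}$ distributes $x$ over the tensor factors. I would also note that the signs and the choice of $\omega(\sigma)$ in \eqref{eqcuv} and \eqref{rtr4} involve only permutations, which commute with $x$ by cocommutativity.
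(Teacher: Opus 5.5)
Your proposal is correct and follows essentially the same route as the paper. Both arguments identify $HL_\bullet(gl(A))$ with $H_{\bullet-1}(L_\bullet(A))$ via \eqref{step5.1g}, use that the inclusion $V_\bullet(A)\subseteq L_\bullet(A)$ (which realizes the primitive part, by Cuvier) commutes with the measuring maps, and pass from $HZ_\bullet$ to $HH_\bullet$ through $\iota$ and $\zeta$ using Proposition \ref{P5.dc}; the only difference is that you spell out the chain-level check that the induced map on $K[S_n]\otimes A^{\otimes n}$ is $id\otimes x$, which the paper simply calls obvious.
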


\begin{proof}
We have noted before that there are  quasi-isomorphisms of complexes   $C_\bullet(A)\longrightarrow V_\bullet(A)$ and
$C_\bullet(A')\longrightarrow V_\bullet(A')$.   From \eqref{step5.1g} and Proposition \ref{P5.dc}, we now have commutative diagrams for each $x\in C$
\begin{equation}\label{step5.1gc}
\begin{array}{lll}
 \begin{CD}
 HH_n(A)@>\cong>> H_n(V_\bullet(A))=HZ_n(A)\\
 @VHH_n^\Phi(x)VV @VV{HZ_n^\Phi(x)}V \\
  HH_n(A')@>\cong>> H_n(V_\bullet(A'))=HZ_n(A')\\
 \end{CD} &\qquad\qquad& 
\begin{CD}
HL_{n+1}(gl(A)) @>\cong>> H_{n}(L_\bullet(A))\\
@VHL_{n+1}^{\Psi}(x)VV @VVH_{n}^{\Phi}(x)V\\
HL_{n+1}(gl(A')) @>\cong>> H_{n}(L_\bullet(A'))\\
\end{CD}
\\ 
\end{array}
\end{equation}  The canonical inclusions $V_n(A)=K[U_{n+1}]\otimes A^{\otimes n+1}\hookrightarrow K[S_{n+1}]\otimes A^{\otimes n+1}=L_n(A)$ are obviously compatible with the maps induced by the measuring  $\Phi$. From \cite[$\S$ III.2]{Cuv}, we know that the maps on $V_\bullet(A)$ induced by restricting to  the subcomplex $V_\bullet(A)=K[U_{\bullet+1}]\otimes A^{\otimes \bullet+1}\subseteq K[S_{\bullet+1}]\otimes A^{\otimes \bullet+1}=L_\bullet(A)$ coincide with the differentials on $V_\bullet(A)$. Since this restriction determines the isomorphisms $HH_{\bullet-1}(A)\cong Prim(HL_\bullet(gl(A)))$ in \eqref{5.9isob}, it follows that the diagram 
\eqref{5.32cd} commutes.
\end{proof} 

\section{Measurings and product structures on Lie homology and Leibniz homology}

We continue with the cocommutative algebra $C$ and the measuring $\Phi:C\longrightarrow Hom_K(A,A')$ between   algebras 
$A$, $A'$. As described in Section 5, this induces a measuring $gl(\Phi):C\longrightarrow Hom_K(gl(A),gl(A'))$ between Lie algebras, as well as between Leibniz algebras.

\smallskip For the Lie algebra $gl(A)$, we consider its Chevalley-Eilenberg complex $CE_\bullet(gl(A))$ with $CE_n(gl(A)):=\wedge^n gl(A)$. For each $r\geq 1$, the Lie algebra $gl_r(K)$ embeds as a Lie subalgebra of $gl_r(A)$ and hence has an adjoint action on $gl_r(A)$ (see \cite[$\S$ 10.2.9]{Lod}). We denote by $\overline{CE}_n(gl_r(A)):=(\wedge^n gl_r(A))_{gl_r(K)}$ the space of coinvariants of this adjoint action.  We know (see \cite[$\S$ 10.2.9]{Lod}) that the differential on $CE_\bullet(gl_r(A))$ descends to $\overline{CE}_\bullet(gl_r(A))$ and that the epimorphisms
\begin{equation}\label{epi6.1}
CE_n(gl_r(A))\longrightarrow \overline{CE}_n(gl_r(A))\qquad n\geq 1
\end{equation} determine a quasi-isomorphism of complexes. We denote by $\overline{CE}_\bullet(gl(A))$ the colimit of the complexes $\overline{CE}_\bullet(gl_r(A))$  as $r\longrightarrow \infty$. 

\begin{lem}
\label{L6.1} The measuring $\Phi:C\longrightarrow Hom_K(A,A')$ induces  morphisms $\overline{CE}_\bullet^{gl(\Phi)}(x):\overline{CE}_\bullet(gl(A))\longrightarrow \overline{CE}_\bullet(gl(A'))$ of complexes for each $x\in C$. Moreover, these morphisms fit into the following commutative diagram
\begin{equation}\label{diag6.1}
\begin{CD}
CE_\bullet(gl(A)) @>>> \overline{CE}_\bullet(gl(A))\\
@VCE_\bullet^{gl(\Phi)}(x)VV @VV\overline{CE}_\bullet^{gl(\Phi)}(x)V \\
CE_\bullet(gl(A')) @>>> \overline{CE}_\bullet(gl(A'))\\
\end{CD}
\end{equation} for each $x\in C$, where the horizontal arrows in \eqref{diag6.1} are as determined by \eqref{epi6.1}.
\end{lem}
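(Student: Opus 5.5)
The plan is to work first at each finite level $r$ and then pass to the colimit. For fixed $r\geq 1$ and $x\in C$ we already have the chain map $CE_\bullet^{gl_r(\Phi)}(x):CE_\bullet(gl_r(A))\longrightarrow CE_\bullet(gl_r(A'))$, given by $\alpha_1\wedge\dots\wedge\alpha_n\mapsto x_{(1)}(\alpha_1)\wedge\dots\wedge x_{(n)}(\alpha_n)$ (by \cite[Proposition 3.3]{BK1}), where $x$ acts on a matrix entrywise through $\Phi$. The main point is to show that this map carries the subspace spanned by $\{\lambda\cdot\omega-\omega\}$-type relations defining the coinvariants, i.e. the image of the adjoint action of $gl_r(K)$, into the corresponding subspace for $A'$.

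The key computation is an equivariance statement. For $\gamma\in gl_r(K)$ and $\alpha\in gl_r(A)$ I would show
\[
x([\gamma,\alpha])=[\gamma,x(\alpha)].
\]
Since $gl_r(K)\subseteq gl_r(A)$ via the unit, the measuring property gives $x([\gamma,\alpha])=[x_{(1)}(\gamma),x_{(2)}(\alpha)]$. The entries of $\gamma$ are scalars $c\cdot 1_A$, and $\Phi(y)(c1_A)=c\,\epsilon_C(y)1_{A'}$, so $x_{(1)}(\gamma)=\epsilon_C(x_{(1)})\gamma$. The counit axiom then collapses the sum to $[\gamma,x(\alpha)]$. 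One can equally verify this entrywise: $x(\gamma_{ik}a_{kj})=\gamma_{ik}x(a_{kj})$ by linearity.

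Applying this to the adjoint action $\gamma\cdot(\alpha_1\wedge\dots\wedge\alpha_n)=\sum_i \alpha_1\wedge\dots\wedge[\gamma,\alpha_i]\wedge\dots\wedge\alpha_n$ gives
\[
x(\gamma\cdot\omega)=\gamma\cdot x(\omega).
\]
Term by term, $x_{(i)}([\gamma,\alpha_i])=[\gamma,x_{(i)}(\alpha_i)]$ with the remaining Sweedler factors untouched, and no cocommutativity is even needed here. Hence $CE_n^{gl_r(\Phi)}(x)$ sends $gl_r(K)\cdot\wedge^n gl_r(A)$ into $gl_r(K)\cdot\wedge^n gl_r(A')$. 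It therefore descends to a map $\overline{CE}_n^{gl_r(\Phi)}(x)$ on coinvariants, making the level-$r$ square analogous to \eqref{diag6.1} commute.

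Next I would check that the descended maps form a chain map. The quotient maps \eqref{epi6.1} are surjective and the differential on $\overline{CE}_\bullet$ is induced from $d_{CE}$. So $d\circ\overline{CE}^{gl_r(\Phi)}(x)$ and $\overline{CE}^{gl_r(\Phi)}(x)\circ d$ agree after precomposition with an epimorphism, and hence agree. This is the same argument as in Proposition \ref{P3.1} and around \eqref{5.2f2}.

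Finally I would pass to the colimit in $r$. This requires that the maps commute with the stabilization inclusions $gl_r(A)\hookrightarrow gl_{r+1}(A)$. That holds because $x(0)=0$, so padding by zeros commutes with entrywise application of $\Phi$. The induced maps on coinvariants are then compatible with the transition maps $\overline{CE}_\bullet(gl_r(A))\to\overline{CE}_\bullet(gl_{r+1}(A))$, and taking colimits gives $\overline{CE}_\bullet^{gl(\Phi)}(x)$ together with the commutative square \eqref{diag6.1}.

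The only step that needs genuine care is the equivariance identity $x([\gamma,\alpha])=[\gamma,x(\alpha)]$, and specifically the use of the unit axiom $\Phi(y)(1)=\epsilon_C(y)1$. Everything else is formal.
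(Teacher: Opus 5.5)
Your argument is correct and has the same skeleton as the paper's proof. Both descend to $gl_r(K)$-coinvariants at each finite level $r$, get compatibility with the differentials from surjectivity of the quotient maps, and then pass to the colimit in $r$.

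The difference is in how the descent step is justified. The paper rewrites $\overline{CE}_n(gl_r(A))$ as $((gl_r(K)^{\otimes n})_{gl_r(K)}\otimes A^{\otimes n})_{S_n}$ and declares the descent clear. Implicitly, under $gl_r(A)=gl_r(K)\otimes A$ the map $CE_n^{gl_r(\Phi)}(x)$ is the identity on the matrix-unit factor and acts as $a_1\otimes\dots\otimes a_n\mapsto x_{(1)}(a_1)\otimes\dots\otimes x_{(n)}(a_n)$ on $A^{\otimes n}$. So it commutes with the $gl_r(K)$-action, and cocommutativity handles the signed $S_n$-action.

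You instead prove the equivariance $x(\gamma\cdot\omega)=\gamma\cdot x(\omega)$ directly. This is more self-contained and spells out what the paper leaves implicit. You also check compatibility with the stabilization maps $gl_r(A)\hookrightarrow gl_{r+1}(A)$, which the paper takes for granted.

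One small correction to your closing remark. Your entrywise computation $x(\gamma_{ik}a_{kj})=\gamma_{ik}\,x(a_{kj})$ shows that the equivariance needs only $K$-linearity of $\Phi(x)$. So the unit axiom is not the crux. It enters only in your first derivation, through the Lie measuring property of $gl_r(\Phi)$, which also uses cocommutativity. That is just a longer route to the same identity.
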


\begin{proof} For each $r\geq 1$, the measuring $\Phi$ induces a measuring $gl_r(\Phi):C\longrightarrow Hom_K(gl_r(A),gl_r(A'))$ between Lie algebras. 
We know that $\overline{CE}_n(gl_r(A))$  can be described alternatively as the space of coinvariants (see \cite[$\S$ 10.2.10]{Lod} for details)
\begin{equation}\label{xy6.1}
\overline{CE}_n(gl_r(A)):=(\wedge^n gl_r(A))_{gl_r(K)}=((gl_r(K)^{\otimes n})_{gl_r(K)}\otimes A^{\otimes n})_{S_n}
\end{equation} Here, the action of the symmetric group $S_n$   appearing on the right hand side of \eqref{xy6.1} is by permuting variables and multiplying by the sign, while the action of $gl_r(K)$ on the $A^{\otimes n}$ part  appearing in \eqref{xy6.1} is trivial. It is now clear that the maps $CE_n^{gl_r(\Phi)}(x):CE_n(gl_r(A))\longrightarrow CE_n(gl_r(A'))$ descend to $\overline{CE}_n^{gl_r(\Phi)}(x):\overline{CE}_n(gl_r(A))\longrightarrow \overline{CE}_n(gl_r(A'))$.  We already know that each 
$CE_\bullet^{gl_r(\Phi)}(x):CE_\bullet(gl_r(A))\longrightarrow CE_\bullet(gl_r(A'))$ is a morphism of complexes. Now since the maps $CE_n(gl_r(A))\longrightarrow \overline{CE}_n(gl_r(A))$ in \eqref{epi6.1} are epimorphisms, it follows that the maps $\overline{CE}_n^{gl_r(\Phi)}(x)$ are well behaved with respect to the differentials on $\overline{CE}_\bullet(gl_r(A))$ and $\overline{CE}_\bullet(gl_r(A'))$. Taking colimits as $r\longrightarrow \infty$, it follows that the diagram \eqref{diag6.1} commutes.
\end{proof}

The complex $\overline{CE}_\bullet(gl(A))=(\wedge^\bullet gl(A))_{gl(K)}$ also carries the structure of an algebra (see \cite[$\S$ 10.2.13]{Lod}) with product determined  by maps 
\begin{equation}\label{prod6}\small
\begin{array}{c}
(\wedge^p gl(A))_{gl(K)}\otimes (\wedge^q gl(A))_{gl(K)}\longrightarrow (\wedge^{p+q} (gl(A)\oplus gl(A)))_{gl(K)}
\xrightarrow{\quad\wedge^{p+q}(\_\_\oplus\_\_)_{gl(K)}\quad }  (\wedge^{p+q} gl(A))_{gl(K)}\\
(\alpha_1\wedge ... \wedge \alpha_p)\otimes (\beta_1\wedge ...\wedge \beta_q)\mapsto ((\alpha_1\oplus 0)\wedge ...\wedge (\alpha_p\oplus 0)\wedge 
(0\oplus \beta_1)\wedge ...\wedge (0\oplus \beta_q))=(\alpha_1\wedge ... \wedge \alpha_p\wedge \beta_1\wedge ...\wedge \beta_q)\\
\end{array}
\end{equation} for $p$, $q\geq 0$. The first map in \eqref{prod6} is induced by the canonical morphism $(\wedge^p gl(A))\otimes (\wedge^q gl(A))\longrightarrow (\wedge^{p+q} (gl(A)\oplus gl(A)))$ while the second is induced by the operation $\oplus: gl(A)\times gl(A)\longrightarrow gl(A)$ described in \cite[$\S$ 10.2.12]{Lod}. At the homology level, the product in \eqref{prod6} makes
$H_\bullet(gl(A))=H_\bullet(CE_\bullet(gl(A)))=H_\bullet(\overline{CE}_\bullet(gl(A)))$ into an  algebra. We are now ready to show that the measuring  $\Phi:C\longrightarrow Hom_K(A,A')$ induces a measuring between the algebras $H_\bullet(gl(A))$ and $H_\bullet(gl(A'))$. 

\begin{thm}\label{P6.2}
Let $C$ be a cocommutative coalgebra and let $\Phi:C\longrightarrow Hom_K(A,A')$ be a measuring between  algebras. Then, the following
\begin{equation}\label{mslie6}
H_\bullet^{gl(\Phi)}:C\longrightarrow Hom_K(H_\bullet(gl(A)),H_\bullet(gl(A')))\qquad x\mapsto H_\bullet^{gl(\Phi)}(x):H_\bullet(gl(A))\longrightarrow H_\bullet(gl(A'))
\end{equation} determines a  measuring of algebras from $H_\bullet(gl(A))$ to $H_\bullet(gl(A'))$. 
\end{thm}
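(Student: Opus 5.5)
The plan is to verify the measuring identity at the chain level on the coinvariant complexes $\overline{CE}_\bullet(gl(A))$. By Lemma \ref{L6.1}, the maps $\overline{CE}^{gl(\Phi)}_\bullet(x)$ are morphisms of complexes compatible with the quasi-isomorphisms $CE_\bullet(gl(A))\to \overline{CE}_\bullet(gl(A))$. Hence $H^{gl(\Phi)}_\bullet(x)$ may be computed on $H_\bullet(\overline{CE}_\bullet(gl(A)))$, where the product is induced by \eqref{prod6}. It therefore suffices to show, for $\alpha_1\wedge\dots\wedge\alpha_p\in(\wedge^p gl(A))_{gl(K)}$ and $\beta_1\wedge\dots\wedge\beta_q\in(\wedge^q gl(A))_{gl(K)}$, that
\begin{equation*}
x\cdot\big((\alpha_1\wedge\dots\wedge\alpha_p)\cdot(\beta_1\wedge\dots\wedge\beta_q)\big)=\big(x_{(1)}(\alpha_1\wedge\dots\wedge\alpha_p)\big)\cdot\big(x_{(2)}(\beta_1\wedge\dots\wedge\beta_q)\big),
\end{equation*}
together with the unit condition $x(1)=\epsilon_C(x)1$ in degree $0$. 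The unit condition is immediate, since $\wedge^0=K$ and the map on $\wedge^0$ is $\epsilon_C(x)$, exactly as in the unit part of \eqref{ms1.1}.

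The key step is to check that the map $\oplus:gl(A)\times gl(A)\to gl(A)$ of \cite[$\S$ 10.2.12]{Lod} commutes with $gl(\Phi)$ entrywise. This map places $\alpha$ and $\beta$ into complementary (say odd/even) rows and columns, via a reindexing of entries. Since $gl(\Phi)(x)$ acts entrywise by $\Phi(x)$ and $\Phi(x)(0)=0$, one gets $x(\alpha\oplus 0)=x(\alpha)\oplus 0$ and $x(0\oplus\beta)=0\oplus x(\beta)$. Then
\begin{equation*}
x\big((\alpha_1\oplus0)\wedge\dots\wedge(0\oplus\beta_q)\big)=x_{(1)}(\alpha_1)\oplus0\wedge\dots\wedge x_{(p)}(\alpha_p)\oplus 0\wedge 0\oplus x_{(p+1)}(\beta_1)\wedge\dots\wedge0\oplus x_{(p+q)}(\beta_q),
\end{equation*}
by the definition of $CE^{gl(\Phi)}_{p+q}(x)$. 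Coassociativity lets us regroup $\Delta^{p+q}(x)$ as $(x_{(1)})_{(1)}\otimes\dots\otimes(x_{(1)})_{(p)}\otimes(x_{(2)})_{(1)}\otimes\dots\otimes(x_{(2)})_{(q)}$. The right-hand side is then exactly the product of $x_{(1)}(\alpha_1\wedge\dots\wedge\alpha_p)$ and $x_{(2)}(\beta_1\wedge\dots\wedge\beta_q)$ under \eqref{prod6}. Cocommutativity is needed only to ensure that $CE^{gl(\Phi)}(x)$ is well defined on exterior powers, which is already known.

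The main obstacle is well-definedness on coinvariants. The identity $\alpha\oplus\beta$ depends on a choice of embedding, and the product is only well defined after passing to $gl(K)$-coinvariants in the colimit. I would handle this by fixing, for each $r$, the explicit embedding $gl_r(A)\times gl_r(A)\to gl_{2r}(A)$ used in \cite{Lod}, and checking the commutation at the level of $CE_\bullet(gl_r(A))\otimes CE_\bullet(gl_r(A))\to CE_\bullet(gl_{2r}(A))$ before projecting. The chain-level identity then descends through the epimorphisms to coinvariants, as in Lemma \ref{L6.1}, and passes to the colimit over $r$.

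Finally, taking homology gives
\begin{equation*}
H^{gl(\Phi)}_\bullet(x)(u\cdot v)=H^{gl(\Phi)}_\bullet(x_{(1)})(u)\cdot H^{gl(\Phi)}_\bullet(x_{(2)})(v).
\end{equation*}
This is the measuring condition. The Leibniz statement would follow by the same argument, with $\wedge$ replaced by $\otimes$ and \eqref{5.2f2} used in place of Lemma \ref{L6.1}.
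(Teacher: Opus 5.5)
Your proposal is correct and follows essentially the same route as the paper: it computes $H_\bullet^{gl(\Phi)}(x)$ through the maps $\overline{CE}_\bullet^{gl(\Phi)}(x)$ of Lemma \ref{L6.1}, then verifies the measuring identity at the chain level for the product \eqref{prod6} by regrouping the iterated coproduct of $x$ via coassociativity. Your extra care fills in details that the paper's one-line computation glosses over: the unit condition in degree $0$, the check $x(\alpha\oplus 0)=x(\alpha)\oplus 0$ for a fixed embedding $gl_r(A)\times gl_r(A)\to gl_{2r}(A)$ before passing to coinvariants, and in particular the paper's informal identification of $\alpha_i\oplus 0$ and $0\oplus\beta_j$ with $\alpha_i$ and $\beta_j$.
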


\begin{proof}
Let $x\in C$.  From the quasi-isomorphism in \eqref{epi6.1} and the commutativity of the diagram \eqref{diag6.1}, it follows that the maps $H_\bullet^{gl(\Phi)}(x):H_\bullet(gl(A))\longrightarrow H_\bullet(gl(A'))$ can also be induced by the maps $\overline{CE}_\bullet^{gl(\Phi)}(x)$. We consider $\alpha_1\wedge ... \wedge \alpha_p\in \overline{CE}_p(gl(A))$ and $\beta_1\wedge ...\wedge \beta_q\in \overline{CE}_q(gl(A))$. The result is now clear from the fact that we have
\begin{equation}
\begin{array}{ll}
\overline{CE}_{p+q}^{gl(\Phi)}(x)((\alpha_1\wedge ... \wedge \alpha_p)(\beta_1\wedge ...\wedge \beta_q))&=\overline{CE}_{p+q}^{gl(\Phi)}(x)(\alpha_1\wedge ... \wedge \alpha_p\wedge \beta_1\wedge ...\wedge \beta_q )\\
&= x_{(1)}(\alpha_1)\wedge ... \wedge x_{(p)}(\alpha_p)\wedge x_{(p+1)}(\beta_1)\wedge ...\wedge x_{(p+q)}(\beta_q)\\
&=\overline{CE}_{p}^{gl(\Phi)}(x_{(1)})(\alpha_1\wedge ... \wedge \alpha_p)\overline{CE}_{q}^{gl(\Phi)}(x_{(2)})(\beta_1\wedge ...\wedge \beta_q) \\
\end{array}
\end{equation} 
\end{proof}

Our next aim is to show that the measuring in Proposition \ref{P6.2} allows us to define a new enrichment of algebras over the symmetric monoidal category  $coCoalg_K$ of cocommutative coalgebras. For this, we define a new category $\overline{ALG}^c_K$ whose objects are  $K$-algebras, and whose Hom objects are given by 
\begin{equation}\label{enr6tt}
\overline{ALG}^c_K(A,A'):=\mathcal M_c(H_\bullet(gl(A)),H_\bullet(gl(A')))
\end{equation} for $K$-algebras $A$, $A'$. In \eqref{enr6tt}, $\mathcal M(H_\bullet(gl(A)),H_\bullet(gl(A')))$ denotes the universal cocommutative measuring coalgebra from $H_\bullet(gl(A))$ to $H_\bullet(gl(A'))$ (see Section 3 for notation).

\begin{thm}
\label{P6.3cx} Let $A$, $A'$ be  $K$-algebras. Then, there are canonical morphisms 
\begin{equation}
\overline\tau(A,A'): \mathcal M_c(A,A')\longrightarrow \mathcal M_c(H_\bullet(gl(A)),H_\bullet(gl(A')))
\end{equation}
of cocommutative $K$-coalgebras.
\end{thm}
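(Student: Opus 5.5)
The argument follows the same pattern as the proof of Proposition \ref{P3.5}, with Theorem \ref{T3.2} replaced by Proposition \ref{P6.2}.

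Let $\Psi^c(A,A'):\mathcal M_c(A,A')\longrightarrow Hom_K(A,A')$ be the universal cocommutative measuring from $A$ to $A'$. Since $\mathcal M_c(A,A')$ is cocommutative, the discussion in Section 5 shows that $\Psi^c(A,A')$ induces a measuring of Lie algebras
\begin{equation*}
gl(\Psi^c(A,A')):\mathcal M_c(A,A')\longrightarrow Hom_K(gl(A),gl(A')),
\end{equation*}
obtained by applying the measuring entrywise to matrices. It is compatible with the inclusions $gl_r(A)\hookrightarrow gl_{r+1}(A)$.

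Proposition \ref{P6.2} applies with $C=\mathcal M_c(A,A')$ and $\Phi=\Psi^c(A,A')$. It gives a measuring of algebras
\begin{equation*}
H_\bullet^{gl(\Psi^c(A,A'))}:\mathcal M_c(A,A')\longrightarrow Hom_K(H_\bullet(gl(A)),H_\bullet(gl(A'))),
\end{equation*}
where the products are those of \eqref{prod6}. Its source coalgebra is cocommutative. By the universal property of the universal cocommutative measuring $\Psi^c(H_\bullet(gl(A)),H_\bullet(gl(A')))$, there is a unique coalgebra map
\begin{equation*}
\overline\tau(A,A'):\mathcal M_c(A,A')\longrightarrow \mathcal M_c(H_\bullet(gl(A)),H_\bullet(gl(A')))
\end{equation*}
through which this measuring factors. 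Since both coalgebras are cocommutative, $\overline\tau(A,A')$ is a morphism of cocommutative coalgebras. Canonicity comes from the uniqueness in the universal property.

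The one step that needs care is that Proposition \ref{P6.2} produces a measuring in the sense of \eqref{ms1.1}, including the unit condition. The paper's verification of \ref{P6.2} only checks multiplicativity, so the unit condition must be checked separately. The unit of $H_\bullet(gl(A))$ is the class of $1\in\wedge^0 gl(A)=K$. On $CE_0$, the map $CE_0^{gl(\Phi)}(x)$ is the empty tensor power of the measuring, which is multiplication by $\epsilon_C(x)$. Hence $H_0^{gl(\Phi)}(x)(1)=\epsilon_C(x)1$, which is the unit condition. With that, the universal property applies directly and nothing further is needed.
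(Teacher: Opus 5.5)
Your proposal is correct and is essentially the paper's own argument: apply Proposition \ref{P6.2} to the universal cocommutative measuring $\Psi^c(A,A')$, then invoke the universal property of $\mathcal M_c(H_\bullet(gl(A)),H_\bullet(gl(A')))$. Your extra verification of the unit condition, using the convention $CE_0^{gl(\Phi)}(x)=\epsilon_C(x)\cdot id_K$, is correct and fills in a point the paper leaves implicit.
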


\begin{proof}
For  $K$-algebras $A$, $A'$, we  consider the   measuring 
$
\Psi^c(A,A'):
\mathcal M_c(A,A')\longrightarrow Hom_K(A,A')
$ that is universal among cocommutative measurings from $A$ to $A'$. Since $\mathcal M_c(A,A')$ is cocommutative, it follows from Proposition \ref{P6.2} that we have  an induced measuring
\begin{equation}
H_\bullet^{gl(\Psi^c(A,A'))}: \mathcal M_c(A,A')\longrightarrow Hom_K(H_\bullet(gl(A)),H_\bullet(gl(A')))
\end{equation} between the respective Lie algebra homology rings. The coalgebra morphism $\overline\tau(A,A'): \mathcal M_c(A,A')\longrightarrow \mathcal M_c(H_\bullet(gl(A)),H_\bullet(gl(A')))$ now follows directly from the universal property of $\mathcal M_c(H_\bullet(gl(A)),H_\bullet(gl(A')))$. 
\end{proof}

\begin{Thm}
\label{T3.6cx} There is a $coCoalg_K$-enriched functor $ALG_K^c\longrightarrow \overline{ALG}^c_K$  which is identity on objects and whose mapping on Hom objects is given by \begin{equation}\label{3.21tux}
\overline\tau(A,A'): \mathcal M_c(A,A')\longrightarrow \mathcal M_c(H_\bullet(gl(A)),H_\bullet(gl(A')))
\end{equation} for  $K$-algebras $A$, $A'$. 
\end{Thm}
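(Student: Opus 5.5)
We follow the template of the proof of Theorem \ref{T3.6}, with $\overline\tau$ from Proposition \ref{P6.3cx} in place of $\tau$. Proposition \ref{P6.3cx} already shows that each $\overline\tau(A,A')$ is a morphism in $coCoalg_K$. It therefore remains to check two things: compatibility with composition and compatibility with units.

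\textbf{Composition.} For $K$-algebras $A$, $A'$, $A''$ we must show that $\overline\tau(A,A'')\circ(\circ)=(\circ)\circ(\overline\tau(A,A')\otimes\overline\tau(A',A''))$ as maps
\begin{equation*}
\mathcal M_c(A,A')\otimes\mathcal M_c(A',A'')\longrightarrow \mathcal M_c(H_\bullet(gl(A)),H_\bullet(gl(A''))).
\end{equation*}
The tensor product of cocommutative coalgebras is cocommutative, and both composites are coalgebra maps. By the universal property of $\mathcal M_c(H_\bullet(gl(A)),H_\bullet(gl(A'')))$, it suffices to show that the two composites agree after applying $\Psi^c(H_\bullet(gl(A)),H_\bullet(gl(A'')))$.

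Take $x\in\mathcal M_c(A,A')$ and $y\in\mathcal M_c(A',A'')$. Composition in $ALG^c_K$ is a coalgebra map, so
\begin{equation*}
\Delta^{n}(y\circ x)=(y_{(1)}\circ x_{(1)})\otimes\dots\otimes(y_{(n)}\circ x_{(n)}).
\end{equation*}
On a generator $\alpha_1\wedge\dots\wedge\alpha_n\in CE_n(gl(A))$, the first composite therefore acts by $(y\circ x)_{(1)}(\alpha_1)\wedge\dots\wedge(y\circ x)_{(n)}(\alpha_n)$. Here $(y\circ x)_{(i)}(\alpha_i)$ means the entrywise action on matrices. This expression equals $y_{(1)}(x_{(1)}(\alpha_1))\wedge\dots\wedge y_{(n)}(x_{(n)}(\alpha_n))$.

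The second composite equals $H^{gl(\Psi^c)}_\bullet(y)\circ H^{gl(\Psi^c)}_\bullet(x)$. This holds because $\Psi^c$ converts composition in $\mathcal M_c$ into composition in $Hom_K$, exactly as in the commutative square used in the proof of Theorem \ref{T3.6}. At chain level $H^{gl(\Psi^c)}_\bullet(y)\circ H^{gl(\Psi^c)}_\bullet(x)$ sends the same generator to $y_{(1)}(x_{(1)}(\alpha_1))\wedge\dots\wedge y_{(n)}(x_{(n)}(\alpha_n))$. The two composites thus agree on chains of $CE_\bullet(gl(A))$, hence on $H_\bullet(gl(A))$.

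\textbf{Units.} The unit of $ALG^c_K$ sends $1\in K$ to the identity element of $\mathcal M_c(A,A)$, and $1$ is grouplike in $K$. Composing with $\overline\tau(A,A)$ and $\Psi^c$ therefore gives the map $K\to Hom_K(H_\bullet(gl(A)),H_\bullet(gl(A)))$, $\lambda\mapsto\lambda\cdot id$, computed via $\Delta^n(1)=1\otimes\dots\otimes 1$. This scalar map is itself a cocommutative measuring. By uniqueness in the universal property, the composite through $\mathcal M_c(H_\bullet(gl(A)),H_\bullet(gl(A)))$ is the unit of $\overline{ALG}^c_K$.

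\textbf{Main obstacle.} The only delicate point is the identification of the second composite with the honest composite of induced homology maps. This rests on the enriched composition in $\overline{ALG}^c_K$ being the coalgebra map classified by the composite measuring $(u\otimes v)\mapsto\Psi^c(v)\circ\Psi^c(u)$. That is the standard Sweedler-Hom composition, so it can be quoted directly as in Theorem \ref{T3.6}.
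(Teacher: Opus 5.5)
Your proposal is correct and follows essentially the same approach as the paper, whose proof consists only of the remark that it is similar to that of Theorem \ref{T3.6}. You carry out exactly that template with $\overline\tau$ in place of $\tau$: composition via the universal property of $\mathcal M_c(H_\bullet(gl(A)),H_\bullet(gl(A'')))$, checked on Chevalley--Eilenberg chains using $\Delta^n(y\circ x)=(y_{(1)}\circ x_{(1)})\otimes\dots\otimes(y_{(n)}\circ x_{(n)})$, and units via the grouplike element together with the scalar measuring.
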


\begin{proof}
The proof of this result is similar to that of Theorem \ref{T3.6}.
\end{proof}

We now come to the Leibniz homology. As mentioned in Section 5, there is a quasi-isomorphism $CL_\bullet(gl(A))\longrightarrow \overline{CL}_\bullet(gl(A))$ of complexes induced by the canonical epimorphisms $CL_n(gl(A))=(gl(A))^{\otimes n}\longrightarrow (gl(A))^{\otimes n}_{gl(K)}=\overline{CL}_n(gl(A))$ to the space of coinvariants. Given the measuring $\Phi:C\longrightarrow Hom_K(A,A')$, it follows from \eqref{5.2f2} that we have a commutative diagram
\begin{equation}\label{diag6.3}
\begin{CD}
CL_\bullet(gl(A)) @>>> \overline{CL}_\bullet(gl(A))\\
@VCL_\bullet^{gl(\Phi)}(x)VV @VV\overline{CL}_\bullet^{gl(\Phi)}(x)V \\
CL_\bullet(gl(A')) @>>> \overline{CL}_\bullet(gl(A'))\\
\end{CD}
\end{equation}
for each $x\in C$. We recall (see \cite[$\S$ III.2]{Cuv}) that the Leibniz homology $HL_\bullet(gl(A))=H_\bullet(CL_\bullet(gl(A)))=H_\bullet(\overline{CL}_\bullet(gl(A)))$ becomes an algebra with product structure induced by \begin{equation}\label{prod6L}\small 
\begin{array}{c}
(gl(A))^{\otimes p}_{gl(K)}\otimes (gl(A))^{\otimes q}_{gl(K)}\longrightarrow (gl(A)\oplus gl(A))^{p+q}_{gl(K)}
\xrightarrow{\quad\otimes^{p+q}(\_\_\oplus\_\_)_{gl(K)}\quad }  (gl(A))^{p+q}_{gl(K)}\\
(\alpha_1\otimes ... \otimes \alpha_p)\otimes (\beta_1\otimes ... \otimes \beta_q)\mapsto ((\alpha_1\oplus 0)\otimes ...\otimes (\alpha_p\oplus 0)\otimes 
(0\oplus \beta_1)\otimes ...\otimes (0\oplus \beta_q))=(\alpha_1\otimes ... \otimes \alpha_p\otimes \beta_1\otimes...\otimes\beta_q)\\
\end{array}
\end{equation} for $p$, $q\geq 0$. As in \eqref{prod6}, the first map in \eqref{prod6L} is induced by the canonical morphism $(gl(A))^{\otimes p}\otimes (gl(A))^{\otimes q}\longrightarrow (gl(A)\oplus gl(A))^{\otimes p+q}$ while the second is induced by the operation $\oplus: gl(A)\times gl(A)\longrightarrow gl(A)$ described in \cite[$\S$ 10.2.12]{Lod}.   We  will now show that the measuring  $\Phi:C\longrightarrow Hom_K(A,A')$ induces a measuring between the algebras $HL_\bullet(gl(A))$ and $HL_\bullet(gl(A'))$. 

\begin{thm}\label{P6.2L}
Let $C$ be a cocommutative coalgebra and let $\Phi:C\longrightarrow Hom_K(A,A')$ be a measuring between  algebras. Then, the following
\begin{equation}\label{mslie6Lx}
HL_\bullet^{gl(\Phi)}:C\longrightarrow Hom_K(HL_\bullet(gl(A)),HL_\bullet(gl(A')))\qquad x\mapsto HL_\bullet^{gl(\Phi)}(x):HL_\bullet(gl(A))\longrightarrow HL_\bullet(gl(A'))
\end{equation} determines a cocommutative measuring of algebras from $HL_\bullet(gl(A))$ to $HL_\bullet(gl(A'))$. 
\end{thm}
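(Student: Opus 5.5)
The plan mirrors the proof of Proposition \ref{P6.2}, with exterior powers replaced by tensor powers. Fix $x\in C$. First, we use the commutative diagram \eqref{diag6.3}, together with the fact that the horizontal arrows $CL_\bullet(gl(A))\longrightarrow \overline{CL}_\bullet(gl(A))$ are quasi-isomorphisms. This shows that $HL_\bullet^{gl(\Phi)}(x)$ is also induced on homology by the chain maps $\overline{CL}_\bullet^{gl(\Phi)}(x)$ on the coinvariant complexes.

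It is worth checking that these chain maps are well defined on the colimit. For each $r$, the map $\overline{CL}_\bullet^{gl_r(\Phi)}(x)$ comes from \eqref{5.2f2}. These maps are compatible with the stabilization inclusions $gl_r\hookrightarrow gl_{r+1}$, because $gl_r(\Phi)(x)$ acts entrywise on matrices and sends zero entries to zero. Hence they pass to the colimit $\overline{CL}_\bullet(gl(A))$.

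Next we compare with the product \eqref{prod6L}. Take $\alpha_1\otimes\dots\otimes\alpha_p\in \overline{CL}_p(gl(A))$ and $\beta_1\otimes\dots\otimes\beta_q\in\overline{CL}_q(gl(A))$; their product is the class of $\alpha_1\otimes\dots\otimes\alpha_p\otimes\beta_1\otimes\dots\otimes\beta_q$ obtained via the $\oplus$ operation. The key observation is that the measuring $gl(\Phi)$ commutes with $\oplus$: $x(\alpha\oplus\beta)$ is computed entrywise, and the zero blocks stay zero since $x(0)=0$. Concretely, $x(\alpha\oplus 0)=x(\alpha)\oplus 0$ and $x(0\oplus\beta)=0\oplus x(\beta)$. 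Therefore
\begin{equation*}
\overline{CL}^{gl(\Phi)}_{p+q}(x)\bigl((\alpha_1\otimes\dots\otimes\alpha_p)\cdot(\beta_1\otimes\dots\otimes\beta_q)\bigr)=\bigl(x_{(1)}(\alpha_1)\otimes\dots\otimes x_{(p)}(\alpha_p)\bigr)\cdot\bigl(x_{(p+1)}(\beta_1)\otimes\dots\otimes x_{(p+q)}(\beta_q)\bigr).
\end{equation*}
By coassociativity, the right hand side equals $\overline{CL}^{gl(\Phi)}_p(x_{(1)})(\alpha_1\otimes\cdots)\cdot\overline{CL}^{gl(\Phi)}_q(x_{(2)})(\beta_1\otimes\cdots)$.

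For the unit condition, we note that in degree $0$ the map is $\epsilon_C(x)\cdot\mathrm{id}_K$, which gives $x(1)=\epsilon_C(x)1$. Passing to homology then gives the measuring property. The target coalgebra $C$ is cocommutative by hypothesis, so the measuring is cocommutative.

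The main obstacle is the bookkeeping of the identification of the block-sum $\oplus$ in $gl(A)$ (interleaving indices, as in \cite[$\S$ 10.2.12]{Lod}) with the coinvariant classes. That identification holds only after passing to $gl(K)$-coinvariants, where the choice of embedding is irrelevant. We must check that $\overline{CL}^{gl(\Phi)}(x)$ respects it. I would handle this by noting that any such reindexing is conjugation by a permutation matrix in $gl(K)$. Since the measuring acts entrywise, it commutes with conjugation by matrices with entries in $K$: each entry of the conjugate is a single entry of the original, given that $x(1)=\epsilon(x)$ is not needed, only linearity. Hence it commutes with these identifications.
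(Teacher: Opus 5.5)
Your proposal is correct and follows the same route as the paper. You pass to the coinvariant complexes $\overline{CL}_\bullet(gl(A))$ via the quasi-isomorphisms and diagram \eqref{diag6.3}, then verify the measuring identity on the product \eqref{prod6L} using coassociativity; your additional checks (compatibility with stabilization and with the block sum $\oplus$, and the degree-zero unit condition $x(1)=\epsilon_C(x)1$) are details the paper leaves implicit, and you handle them correctly.
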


\begin{proof} Considering the quasi-isomorphisms  $CL_\bullet(gl(A))\longrightarrow \overline{CL}_\bullet(gl(A))$ and $CL_\bullet(gl(A'))\longrightarrow \overline{CL}_\bullet(gl(A'))$, as well as the commutativity of the diagram \eqref{diag6.3}, we see that the maps $HL_\bullet^{gl(\Phi)}(x):HL_\bullet(gl(A))\longrightarrow HL_\bullet(gl(A'))$ are also be induced by the maps $\overline{CL}_\bullet^{gl(\Phi)}(x)$ for $x\in C$. 
We let $\alpha_1\otimes ... \otimes\alpha_p\in \overline{CL}_p(gl(A))$ and $\beta_1\otimes ...\otimes \beta_q\in \overline{CL}_q(gl(A))$. 
The result is now clear from the fact that for any $x\in C$, we have
\begin{equation}
\begin{array}{ll}
\overline{CL}_{p+q}^{gl(\Phi)}(x)((\alpha_1\otimes ... \otimes\alpha_p)(\beta_1\otimes ...\otimes\beta_q))&=\overline{CL}_{p+q}^{gl(\Phi)}(x)(\alpha_1\otimes ... \otimes \alpha_p\otimes \beta_1\otimes ...\otimes \beta_q )\\
&= x_{(1)}(\alpha_1)\otimes  ... \otimes x_{(p)}(\alpha_p)\otimes x_{(p+1)}(\beta_1)\otimes ...\otimes x_{(p+q)}(\beta_q)\\
&=\overline{CL}_{p}^{gl(\Phi)}(x_{(1)})(\alpha_1\otimes  ... \otimes \alpha_p)\overline{CL}_{q}^{gl(\Phi)}(x_{(2)})(\beta_1\otimes ...\otimes \beta_q) \\
\end{array}
\end{equation}  
\end{proof}

The last step in this section is to define an enrichment of  algebras over cocommutative coalgebras using the measuring in Proposition \ref{P6.2L}. Similar to \eqref{enr6tt}, we define a new category $\widehat{ALG}^c_K$ whose objects are  $K$-algebras, and whose Hom objects are given by 
\begin{equation}\label{enr6tm}
\widehat{ALG}^c_K(A,A'):=\mathcal M_c(HL_\bullet(gl(A)),HL_\bullet(gl(A')))
\end{equation} for  $K$-algebras $A$, $A'$. Here, $\mathcal M_c(H_\bullet(gl(A)),H_\bullet(gl(A')))$ is the universal cocommutative measuring coalgebra from $HL_\bullet(gl(A))$ to $HL_\bullet(gl(A'))$.

\begin{Thm}
\label{T3.6cxL} Let $A$, $A'$ be  $K$-algebras. Then, there are canonical maps 
\begin{equation}
\widehat\tau(A,A'): \mathcal M_c(A,A')\longrightarrow \mathcal M_c(HL_\bullet(gl(A)),HL_\bullet(gl(A')))
\end{equation}
of cocommutative $K$-coalgebras. Further, there is a $coCoalg_K$-enriched functor $ALG_K^c\longrightarrow \widehat{ALG}^c_K$  which is identity on objects and whose mapping on Hom objects is given by \begin{equation}\label{3.21tuxL}
\widehat\tau(A,A'): \mathcal M_c(A,A')\longrightarrow \mathcal M_c(HL_\bullet(gl(A)),HL_\bullet(gl(A')))
\end{equation} for  algebras $A$, $A'$. 
\end{Thm}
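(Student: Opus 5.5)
The proof follows the pattern of Proposition \ref{P6.3cx} and Theorem \ref{T3.6}, with the Lie algebra homology ring replaced by the Leibniz homology ring. I would split it into two parts: first constructing the coalgebra maps $\widehat\tau(A,A')$, then checking the enriched functor axioms.

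\textbf{Construction of $\widehat\tau(A,A')$.} Take the universal cocommutative measuring $\Psi^c(A,A'):\mathcal M_c(A,A')\longrightarrow Hom_K(A,A')$. Since $\mathcal M_c(A,A')$ is cocommutative, Proposition \ref{P6.2L} applies and gives a measuring of algebras
$$HL_\bullet^{gl(\Psi^c(A,A'))}:\mathcal M_c(A,A')\longrightarrow Hom_K(HL_\bullet(gl(A)),HL_\bullet(gl(A'))).$$
By the universal property of $\mathcal M_c(HL_\bullet(gl(A)),HL_\bullet(gl(A')))$ among cocommutative measurings, there is a unique coalgebra map $\widehat\tau(A,A')$ through which this measuring factors. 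Its source and target are both cocommutative, so it is a morphism in $coCoalg_K$.

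\textbf{Compatibility with composition.} For algebras $A,A',A''$, I would form the square analogous to \eqref{cd3.22}. By the universal property of $\mathcal M_c(HL_\bullet(gl(A)),HL_\bullet(gl(A'')))$, it suffices to show that the two composites to $Hom_K(HL_\bullet(gl(A)),HL_\bullet(gl(A'')))$ agree. Fix $x\in\mathcal M_c(A,A')$ and $y\in\mathcal M_c(A',A'')$. Because composition is a coalgebra map, $\Delta^{n}(y\circ x)=(y_{(1)}\circ x_{(1)})\otimes\dots\otimes(y_{(n)}\circ x_{(n)})$, as in \eqref{cop3.24}. At chain level, on $\alpha_1\otimes\dots\otimes\alpha_n\in CL_n(gl(A))$, both composites then give
$$y_{(1)}(x_{(1)}(\alpha_1))\otimes\dots\otimes y_{(n)}(x_{(n)}(\alpha_n)).$$
Here $x$ acts on matrices entrywise through $gl(\Phi)$, and the entrywise action respects iterated coproducts in the same way. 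The only step that needs care is checking that the composite measuring $\mathcal M_c(A,A')\otimes\mathcal M_c(A',A'')\to\mathcal M_c(A,A'')\to Hom_K(gl(A),gl(A''))$ agrees with composing the two induced $gl$-measurings. For this I would expand the matrix product: each entry of $gl(\Phi)(x)(\alpha)$ is $x(\alpha_{ij})$, so composition is computed entrywise. Since both composites agree on chains, they agree after passing to $HL_\bullet$, using that the maps on homology are induced by the chain maps $CL_\bullet^{gl(\Phi)}(x)$ (equivalently by $\overline{CL}_\bullet^{gl(\Phi)}(x)$ through \eqref{diag6.3}).

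\textbf{Units.} The composite $K\to\mathcal M_c(A,A)\to\mathcal M_c(HL_\bullet(gl(A)),HL_\bullet(gl(A)))\to Hom_K(HL_\bullet(gl(A)),HL_\bullet(gl(A)))$ sends $1$ to the identity: since $\Delta^{n}(1)=1\otimes\dots\otimes1$ and $1$ acts as $id_A$, it acts as the identity on each $gl(A)^{\otimes n}$. This is exactly the scalar-multiplication measuring, which is the unit of the enriched category $\widehat{ALG}^c_K$, so uniqueness in the universal property gives unit compatibility. I expect no real obstacle; the composition check is the most involved step, and it is bookkeeping with Sweedler notation that mirrors the proof of Theorem \ref{T3.6}.
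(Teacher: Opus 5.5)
Your proposal is correct and follows essentially the same route as the paper, whose proof simply defers to Proposition \ref{P6.3cx} and Theorem \ref{T3.6cx}, and hence ultimately to Theorem \ref{T3.6}. There, $\widehat\tau(A,A')$ comes from Proposition \ref{P6.2L} together with the universal property, and compatibility with composition and units is checked through uniqueness in the universal property plus a chain-level Sweedler computation, exactly as you outline (your explicit entrywise check that the composite measuring on $gl$ agrees with composing the induced $gl$-measurings is a detail the paper leaves implicit).
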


\begin{proof}
The proof of this result is similar to that of Proposition \ref{P6.3cx} and Theorem \ref{T3.6cx}.
\end{proof}

\section{Involutive algebras, dihedral homology and measurings}

We continue with $K$ being a field of characteristic zero. By an involutive $K$-algebra $R$, we will mean  (see \cite[$\S$ 10.5]{Lod}) a unital associative $K$-algebra equipped with a $K$-linear endomorphism $r\mapsto \hat{r}$ which satisfies
  $\hat{\hat{r}}=r$ and $
\hat{r}\hat{s}=\widehat{sr}
$ for $r$, $s\in R$. The element $\hat{r}$ is called the conjugate of $r$. We always  assume that $\hat{1}=1$. For involutive algebras, the role of cyclic homology is often played by dihedral homology (see \cite{Lod87}, \cite{LP88}, \cite[$\S$ 10.5]{Lod}). In this section, we study how measurings induce maps on dihedral homology of involutive algebras. Further, we study the compatibility of maps induced by measurings with the isomorphisms identifying dihedral homology with the primitive part of Lie algebra homology of symplectic matrices and of skew symmetric matrices. We begin with the following definition.

\begin{defn}\label{D7.1}
Let $R$, $R'$ be involutive $K$-algebras. A cocommutative measuring of involutive algebras from $R$ to $R'$ consists of a linear map 
$\Phi:C\longrightarrow Hom_K(R,R')$ such that

\smallskip
(a) $C$ is a cocommutative $K$-coalgebra and $\Phi:C\longrightarrow Hom_K(R,R')$ is a measuring of algebras from $R$ to $R'$.

\smallskip
(b) The measuring $\Phi$ is compatible with conjugation, i.e., we have $\Phi(x)(\hat{r})=\widehat{\Phi(x)(r)}$ for $x\in C$ and $r\in R$.
\end{defn} For $n\geq 0$, we recall that the dihedral group $D_{n+1}$ is generated by symbols
\begin{equation}\label{dihed7}
D_{n+1}=\{\mbox{$u_n$, $v_n$, $\vert$ $u_n^{n+1}=v_n^2=1$ and $v_nu_nv_n^{-1}=u_n^{-1}$} \}
\end{equation} As in previous sections, we set $C_n(R):=R^{\otimes n+1}$ for $n\geq 0$. Then, the dihedral group $D_{n+1}$ acts on $C_n(R)$ as follows (see \cite[$\S$ 10.5.4]{Lod}) 
\begin{equation}\label{dihed7a}
u_n(r_0,r_1,,...,r_n)=(-1)^n(r_n,r_0,...,r_{n-1})\qquad 
v_n(r_0,r_1,...,r_n)=(-1)^{n(n+1)/2}(\hat{r}_0,\hat{r}_n,\hat{r}_{n-1},...,\hat{r}_1)
\end{equation}
We denote by $\mathcal D_n(R):=C_n(R)_{D_{n+1}}$ the module of coinvariants for the action in \eqref{dihed7a} of the dihedral group $D_{n+1}$ on $C_n(R)$. From \cite[$\S$ 5.2.8]{Lod}, we know that the Hochschild differential $b$ descends to the level of coinvariants to give $\bar{b}:\mathcal D_\bullet(R)
\longrightarrow \mathcal D_{\bullet-1}(R)$. The dihedral homology groups
$H\mathcal D_\bullet(R)$ of the involutive algebra $R$ are obtained as the homologies of the complex $(\mathcal D_\bullet(R),\bar{b})$.

\begin{thm}\label{P7.2}
Let $R$, $R'$ be involutive $K$-algebras. Let $\Phi:C\longrightarrow Hom_K(R,R')$ be a cocommutative measuring of involutive algebras from $R$ to $R'$. For each $x\in C$, we have maps
\begin{equation}\label{7.1eq}
\mathcal D_n^\Phi(x):\mathcal D_n(R)\longrightarrow \mathcal D_n(R')\qquad (r_0,r_1,,...,r_n)\mapsto x(r_0,r_1,...,r_n)= (x_{(1)}(r_0),x_{(2)}(r_1),...,x_{(n+1)}(r_n))
\end{equation} which induce a morphism $\mathcal D_\bullet(x): \mathcal D_\bullet(R)\longrightarrow \mathcal D_\bullet(R')$ of complexes. Accordingly, each $x\in C$ induces a morphism $H\mathcal D^\Phi_\bullet(x):H\mathcal D_\bullet(R)\longrightarrow H\mathcal D_\bullet(R')$ on dihedral homologies.
\end{thm}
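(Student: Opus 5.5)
The plan is to work at the level of $C_n(R)=R^{\otimes n+1}$ and then pass to coinvariants. By Proposition \ref{P1.1}, for each $x\in C$ the maps $C_n^\Phi(x)$ form a morphism of cyclic modules. In particular they commute with the Hochschild differential $b$ and with the cyclic operator $t_n$, which is exactly the generator $u_n$ of $D_{n+1}$ in \eqref{dihed7a}. So it remains to check that $C_n^\Phi(x)$ also commutes with $v_n$. Once $C_n^\Phi(x)$ is $D_{n+1}$-equivariant, it descends to a map $\mathcal D_n^\Phi(x)$ on coinvariants $C_n(R)_{D_{n+1}}$. The formula \eqref{7.1eq} is then just the image of $C_n^\Phi(x)$ on classes.

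\textbf{Step 1: equivariance with respect to $v_n$.} Compute
\begin{equation*}
C_n^\Phi(x)(v_n(r_0,\dots,r_n))=(-1)^{n(n+1)/2}\big(x_{(1)}(\hat r_0),x_{(2)}(\hat r_n),\dots,x_{(n+1)}(\hat r_1)\big).
\end{equation*}
Condition (b) of Definition \ref{D7.1} gives $x_{(i)}(\hat r)=\widehat{x_{(i)}(r)}$. Cocommutativity of $C$ lets us relabel the Sweedler indices, so that $x_{(k)}$ is attached to $r_{k-1}$. The result is $v_n(x_{(1)}(r_0),\dots,x_{(n+1)}(r_n))=v_nC_n^\Phi(x)(r_0,\dots,r_n)$. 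This is the same relabelling argument as in the verification for $t$ in Proposition \ref{P1.1}.

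\textbf{Step 2: descent to coinvariants.} Since $u_n$ and $v_n$ generate $D_{n+1}$, the map $C_n^\Phi(x)$ is $D_{n+1}$-equivariant. It therefore sends the submodule spanned by $\{g\cdot z-z\}$ into the corresponding submodule of $C_n(R')$, and induces $\mathcal D_n^\Phi(x)$.

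\textbf{Step 3: compatibility with the differential.} The differential $\bar b$ on $\mathcal D_\bullet$ is induced by $b$ through the epimorphisms $C_n(R)\to\mathcal D_n(R)$. Since $C_\bullet^\Phi(x)$ commutes with $b$, the surjectivity of these projections gives $\bar b\circ\mathcal D_n^\Phi(x)=\mathcal D_{n-1}^\Phi(x)\circ\bar b$. Passing to homology yields $H\mathcal D^\Phi_\bullet(x)$.

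The only point needing any care is Step 1. There one has to make sure that the reversal of the entries $r_1,\dots,r_n$, combined with the relabelling of Sweedler components, is legitimate. It is, because cocommutativity makes the iterated coproduct $\Delta^n(x)$ invariant under every permutation of tensor factors. The sign $(-1)^{n(n+1)/2}$ is unaffected, since it does not depend on the entries.
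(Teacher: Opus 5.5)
Your proposal is correct and follows the paper's proof. Both check that $C_n^\Phi(x)$ commutes with $u_n$ and $v_n$ using cocommutativity and condition (b) of Definition \ref{D7.1}, descend to the coinvariants $\mathcal D_n$, and then obtain compatibility with $\bar b$ from Proposition \ref{P1.1} and the surjectivity of $C_\bullet(R)\to\mathcal D_\bullet(R)$. The only cosmetic difference is that you get $u_n$-equivariance by noting $u_n=t_n$ and citing Proposition \ref{P1.1}, whereas the paper redoes that computation explicitly.
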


\begin{proof}
We first need to show that the morphisms $C_n^\Phi(x):C_n(R)\longrightarrow C_n(R')$ as in \eqref{2.4mao} descend to $\mathcal D_n^\Phi(x):\mathcal D_n(R)=C_n(R)_{D_{n+1}}\longrightarrow C_n(R')_{D_{n+1}}=\mathcal D_n(R')$ at the level of the coinvariant modules. Let 
$(r_0,r_1,...,r_n)\in C_n(R)$. Since $C$ is cocommutative, it follows from the properties in Definition \ref{D7.1} that for $u_n$, $v_n\in D_{n+1}$, we have
\begin{equation}
\begin{array}{ll}
x(u_n(r_0,r_1,,...,r_n))&=(-1)^nx(r_n,r_0,...,r_{n-1})\\
&=(-1)^n(x_{(1)}(r_n),x_{(2)}(r_0),...,x_{(n+1)}(r_{n-1}))\\
&=(-1)^n(x_{(n+1)}(r_n),x_{(1)}(r_0),...,x_{(n)}(r_{n-1}))\\
&=u_n(x_{(1)}(r_0),...,x_{(n)}(r_{n-1}),x_{(n+1)}(r_n))=u_n(x(r_0,r_1,...r_n))\\
&\\
x(v_n(r_0,r_1,,...,r_n))&=(-1)^{n(n+1)/2}x(\hat{r}_0,\hat{r}_n,\hat{r}_{n-1},...,\hat{r}_1)\\
&=(-1)^{n(n+1)/2}(x_{(1)}(\hat{r}_0),x_{(2)}(\hat{r}_n),x_{(3)}(\hat{r}_{n-1}),...,x_{(n+1)}(\hat{r}_1))\\
&=(-1)^{n(n+1)/2}(x_{(1)}(\hat{r}_0),x_{(n+1)}(\hat{r}_n),x_{(n)}(\hat{r}_{n-1}),...,x_{(2)}(\hat{r}_1))\\
&=(-1)^{n(n+1)/2}(\widehat{x_{(1)}(r_0)},\widehat{x_{(n+1)}(r_n)},\widehat{x_{(n)}(r_{n-1})},...,\widehat{x_{(2)}(r_1)})\\
&=v_n(x_{(1)}(r_0),x_{(2)}(r_1),...,x_{(n)}(r_{n-1}),x_{(n+1)}(r_n))=v_n(x(r_0,r_1,...r_n))\\
\end{array}
\end{equation} It remains to show that the maps $\mathcal D_\bullet^\Phi(x)$ are well behaved with respect to the differentials 
on $\mathcal D_\bullet(R)$ and $\mathcal D_\bullet(R')$. For that we consider the diagrams 
\begin{equation}\label{7.5cdk}
\begin{array}{ccc}
\begin{CD}
C_n(R) @>b>> C_{n-1}(R)\\
@VC^\Phi_n(x)VV @VVC^\Phi_{n-1}(x)V \\
C_n(R') @>b>> C_{n-1}(R')\\ 
\end{CD} &\qquad\Rightarrow\qquad& \begin{CD}
\mathcal D_n(R) @>\bar{b}>> \mathcal D_{n-1}(R)\\
@V\mathcal D^\Phi_n(x)VV @VV\mathcal D^\Phi_{n-1}(x)V \\
\mathcal D_n(R') @>\bar{b}>> \mathcal D_{n-1}(R')\\ 
\end{CD}\\
\end{array}
\end{equation} From Proposition \ref{P1.1}, we know that the left hand side diagram in \eqref{7.5cdk} commutes. Further, we know that the Hochschild differentials on $C_\bullet(R)$ and $C_\bullet(R')$ descend to the level of the coinvariants  $\mathcal D_\bullet(R)$ and $\mathcal D_\bullet(R')$ respectively. Since the canonical maps $C_\bullet(R)\longrightarrow \mathcal D_\bullet(R)$ and  $C_\bullet(R')\longrightarrow \mathcal D_\bullet(R')$ are epimorphisms, it follows that the right hand side diagram in \eqref{7.5cdk} also commutes. This proves the result. 
\end{proof}

Let $r\geq 1$. If $R$ is an involutive $K$-algebra, then there is an operation $\alpha\mapsto { ^t}\alpha$ on $gl_r(R)$ given by setting ${^t}\alpha_{ij}:=\hat{\alpha}_{ji}$. Then, the Lie algebra $sk_r(R)$ of skew symmetric matrices is given by
\begin{equation}\label{sk7}
sk_r(R):=\{\mbox{$\alpha\in gl_r(R)$ $\vert$ $^t\alpha=-\alpha$}\}
\end{equation} 
For $r\geq 1$, let $J_r$ denote the $2r\times 2r$ matrix given by $J_r:=j^{\oplus r}$  where $j=\begin{pmatrix}0& 1 \\ -1 & 0\\ \end{pmatrix}$ (see \cite[$\S$ 10.5.3]{Lod}). This is used to define an operation  $\alpha\mapsto { ^T}\alpha$ on $gl_{2r}(R)$ given by setting $^T\alpha:=-J_r{^t}\alpha J_r$. Then, the Lie algebra $sp_{2r}(R)$ of symplectic matrices is given by
\begin{equation}\label{sp7}
sp_{2r}(R):=\{\mbox{$\alpha\in gl_{2r}(R)$ $\vert$ $^T\alpha=-\alpha$}\}
\end{equation} Now let $\Phi:C\longrightarrow Hom_K(R,R')$ be a cocommutative measuring of involutive algebras from $R$ to $R'$. We set $\Psi_r:=gl_r(\Phi):C\longrightarrow 
Hom_K(gl_r(R),gl_r(R'))$ and $\Psi:=gl(\Phi):C\longrightarrow Hom_K(gl(R),gl(R'))$.  

\begin{lem}\label{L7.3} 
For each $x\in C$, we have
\begin{equation}\label{7.8tr}
^t\Psi(x)(\alpha)=\Psi(x)({{ ^t}\alpha}) \qquad ^T\Psi(x)(\beta)=\Psi(x)({{ ^T}\beta})
\end{equation} for $\alpha\in gl_r(R)$ and $\beta\in gl_{2r}(R)$.  Further, $\Psi_\bullet:=gl_\bullet(\Phi):C\longrightarrow Hom_K(gl_\bullet(R),gl_\bullet(R'))$ restricts to measurings
\begin{equation}\label{7.9trf}
sk_r(\Phi):C\longrightarrow 
Hom_K(sk_r(R),sk_r(R'))\qquad sp_{2r}(\Phi):C\longrightarrow 
Hom_K(sp_{2r}(R),sp_{2r}(R'))
\end{equation}
of Lie algebras. 
\end{lem}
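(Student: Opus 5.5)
The plan is to verify \eqref{7.8tr} entrywise and then read off the restriction statements from it. Recall that $\Psi(x)$ acts on a matrix entrywise, $\Psi(x)(\alpha)_{ij}=x(\alpha_{ij})$. This holds because $gl_r(\Phi)$ is the matrix measuring induced by $\Phi$, i.e. $M_r(\Phi)(x)(\alpha)=(x(\alpha_{ij}))_{ij}$.

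\emph{First identity.} Compute the $(i,j)$-entry of ${}^t\Psi(x)(\alpha)$. It is $\widehat{\Psi(x)(\alpha)_{ji}}=\widehat{x(\alpha_{ji})}$, and by condition (b) of Definition \ref{D7.1} this equals $x(\hat\alpha_{ji})=x({}^t\alpha_{ij})=\Psi(x)({}^t\alpha)_{ij}$. So the identity for ${}^t$ involves only linearity and the compatibility with conjugation; cocommutativity is not needed here.

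\emph{Second identity.} The point to check carefully is multiplication by the scalar matrix $J_r$. Since $J_r$ has entries in $\{0,\pm1\}\subseteq K$ and each $\Psi(x)$ is $K$-linear entrywise, we have
\[
\Psi(x)(J_r\gamma J_r)=J_r\,\Psi(x)(\gamma)\,J_r .
\]
This is easiest to see directly: $(J_r\gamma J_r)_{ij}$ is a signed sum of entries of $\gamma$ with coefficients $\pm1$ or $0$ that do not depend on $\gamma$, and applying $x$ entrywise commutes with such linear combinations. One could instead use the measuring property together with $x(\lambda\cdot 1)=\lambda\epsilon_C(x)$, but then the counit bookkeeping would have to be tracked, so the entrywise linear argument is cleaner. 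Combining this with the first identity gives
\[
{}^T\Psi(x)(\beta)=-J_r\,{}^t\Psi(x)(\beta)\,J_r=-J_r\,\Psi(x)({}^t\beta)\,J_r=\Psi(x)(-J_r\,{}^t\beta\, J_r)=\Psi(x)({}^T\beta).
\]

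\emph{Restriction statements.} If $\alpha\in sk_r(R)$, then ${}^t\Psi(x)(\alpha)=\Psi(x)({}^t\alpha)=\Psi(x)(-\alpha)=-\Psi(x)(\alpha)$, so $\Psi_r(x)$ maps $sk_r(R)$ into $sk_r(R')$. The same argument with ${}^T$ shows that $\Psi_{2r}(x)$ maps $sp_{2r}(R)$ into $sp_{2r}(R')$. Both $sk_r$ and $sp_{2r}$ are Lie subalgebras with the commutator bracket. Moreover $gl_r(\Phi)$ is already a measuring of Lie algebras with $C$ cocommutative, as recalled from \cite[$\S$ 3]{BK1} in Section 5. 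Hence the measuring identity $\Psi(x)[\alpha,\beta]=[\Psi(x_{(1)})\alpha,\Psi(x_{(2)})\beta]$ holds in particular for $\alpha,\beta$ in these subalgebras. The restrictions $sk_r(\Phi)$ and $sp_{2r}(\Phi)$ are therefore measurings of Lie algebras.

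The main obstacle is mild: it is the scalar-matrix step above, and it is handled by the entrywise $K$-linearity argument.
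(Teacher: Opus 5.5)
Your proof is correct. Apart from one step it coincides with the paper's: the first identity is checked entrywise from condition (b) of Definition \ref{D7.1}, and the restriction statements are read off from \eqref{7.8tr} exactly as you do.

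The difference is in how $J_r$ is handled. The paper uses the measuring property of $M_{2r}(\Phi)$ to expand
\[
\Psi(x)(J_r\,{}^t\beta\,J_r)=\Psi(x_{(1)})(J_r)\,\Psi(x_{(2)})({}^t\beta)\,\Psi(x_{(3)})(J_r).
\]
It then collapses the outer factors using $\Psi(x)(J_r)=\epsilon_C(x)J_r$ and the counit axiom. This is precisely the ``counit bookkeeping'' route you set aside. Your argument instead uses only two facts: each entry of $J_r\gamma J_r$ is a fixed $K$-linear combination of entries of $\gamma$, and $\Phi(x)$ is $K$-linear. It therefore needs neither multiplicativity nor the unit condition $\Phi(x)(1)=\epsilon_C(x)1$.

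The paper's version has the minor advantage of staying inside the measuring formalism. Yours is shorter and more elementary.
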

\begin{proof} From Definition \ref{D7.1}, it is immediate that $^t\Psi(x)(\alpha)=\Psi(x)({{ ^t}\alpha})$. For $x\in C$, we also notice that
\begin{equation*}
\Psi(x)({{ ^T}\beta})=-\Psi(x)(J_r{^t}\beta J_r)=-\Psi(x_{(1)})(J_r)\Psi(x_{(2)})({ ^t}\beta)\Psi(x_{(3)})(J_r)=-\epsilon_C(x_{(1)})\epsilon_C(x_{(3)})J_r\Psi(x_{(2)})({ ^t}\beta)J_r=
-J_r\Psi(x)({^t}\beta)J_r=^T\Psi(x)(\beta)
\end{equation*} This proves \eqref{7.8tr}. From \eqref{7.8tr} and the definitions in \eqref{sk7}, \eqref{sp7}, it is clear that the maps $\Psi_\bullet(x):gl_\bullet(R)\longrightarrow
gl_\bullet(R')$ restrict to the respective Lie subalgebras of skew symmetric matrices, as well as to the respective Lie subalgebras of symplectic matrices. The result is now clear. 

\end{proof}

We write $sk(R)$ (resp. $sp(R)$) for the colimit of the inclusions $sk_r(R)\hookrightarrow sk_{r+1}(R)$ (resp. the inclusions $sp_{2r}(R)\hookrightarrow sp_{2r+2}(R)$) as $r\longrightarrow \infty$. 
By taking colimits as $r\longrightarrow \infty$ in \eqref{7.9trf}, we obtain   $sk(\Phi):C\longrightarrow Hom_K(sk(R),sk(R'))$ and $sp(\Phi):C\longrightarrow Hom_K(sp(R),sp(R'))$. From 
\cite{LP88}, \cite[$\S$ 10.5]{Lod}, we know that the Lie algebra homologies $H_\bullet(sk(R))$ and $H_\bullet(sp(R))$ are actually Hopf algebras. We now have the following result.

\begin{thm}\label{P7.4}
Let $R$, $R'$ be involutive $K$-algebras. Let $\Phi:C\longrightarrow Hom_K(R,R')$ be a cocommutative measuring of involutive algebras from $R$ to $R'$. For each $x\in C$, we have maps
\begin{equation}\label{7.10map}
H_\bullet^{sk(\Phi)}(x):H_\bullet(sk(R))\longrightarrow H_\bullet(sk(R'))\qquad H_\bullet^{sp(\Phi)}(x):H_\bullet(sp(R))\longrightarrow H_\bullet(sp(R'))
\end{equation}
between Lie algebra homologies. Further, the maps in \eqref{7.10map} restrict to 
\begin{equation}\label{7.11map}
H_\bullet^{sk(\Phi)}(x):Prim(H_\bullet(sk(R)))\longrightarrow Prim(H_\bullet(sk(R')))\qquad H_\bullet^{sp(\Phi)}(x):Prim(H_\bullet(sp(R)))\longrightarrow Prim(H_\bullet(sp(R')))
\end{equation}
on the respective primitive parts of the Lie algebra homologies. 
\end{thm}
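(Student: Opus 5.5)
The plan is to reduce everything to results already established for general Lie algebra measurings. By Lemma \ref{L7.3}, for each $r\geq 1$ the measuring $\Phi$ restricts to measurings of Lie algebras $sk_r(\Phi)$ and $sp_{2r}(\Phi)$, with $C$ cocommutative. These are compatible with the inclusions $sk_r(R)\hookrightarrow sk_{r+1}(R)$ and $sp_{2r}(R)\hookrightarrow sp_{2r+2}(R)$. This holds because $gl_r(\Phi)(x)$ acts entrywise and $\Phi(x)(0)=0$, so extending a matrix by zeros commutes with applying $\Psi(x)$. Passing to the colimit therefore gives measurings of Lie algebras $sk(\Phi)$ and $sp(\Phi)$. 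I will check that these satisfy Definition \ref{D5.1}: the identity $\Psi(x)([\alpha,\beta])=[\Psi(x_{(1)})(\alpha),\Psi(x_{(2)})(\beta)]$ holds inside $gl(R)$, since $gl(\Phi)$ is a measuring of Lie algebras, and it therefore holds on the subalgebras.

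With this in hand, \cite[Proposition 3.3]{BK1} (as recalled in Section 5) gives morphisms of Chevalley--Eilenberg complexes $CE^{sk(\Phi)}_\bullet(x)$ and $CE^{sp(\Phi)}_\bullet(x)$, defined by $\alpha_1\wedge\dots\wedge\alpha_n\mapsto x_{(1)}(\alpha_1)\wedge\dots\wedge x_{(n)}(\alpha_n)$. On homology these yield the maps in \eqref{7.10map}. Equivalently, one can take the colimit over $r$ of the maps $H_\bullet^{sk_r(\Phi)}(x)$, using that homology commutes with filtered colimits; I would note that the two descriptions agree.

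For \eqref{7.11map}, I apply Proposition \ref{P5.1} to the Lie algebras $sk(R)$, $sk(R')$ and $sp(R)$, $sp(R')$. This gives $\Delta'\circ H^{sk(\Phi)}_\bullet(x)=(H^{sk(\Phi)}_\bullet(x_{(1)})\otimes H^{sk(\Phi)}_\bullet(x_{(2)}))\circ\Delta$, and the same for $sp$. Here I need that the coproduct of the Hopf algebra structure on $H_\bullet(sk(R))$ from \cite{LP88} is the one induced by the diagonal. Now let $\alpha$ be primitive. Then
\[
\Delta'(H^{sk(\Phi)}_\bullet(x)(\alpha))=H^{sk(\Phi)}_\bullet(x_{(1)})(1)\otimes H^{sk(\Phi)}_\bullet(x_{(2)})(\alpha)+H^{sk(\Phi)}_\bullet(x_{(1)})(\alpha)\otimes H^{sk(\Phi)}_\bullet(x_{(2)})(1).
\]
In degree $0$ the map acts on $\wedge^0=K$ by $\epsilon_C(y)$, so $H^{sk(\Phi)}_\bullet(y)(1)=\epsilon_C(y)1$. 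The counit axiom then collapses the right-hand side to $1\otimes H^{sk(\Phi)}_\bullet(x)(\alpha)+H^{sk(\Phi)}_\bullet(x)(\alpha)\otimes 1$, so primitives go to primitives.

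The main point requiring care is the degree-$0$ normalization, together with the check that $\Psi(x)$ preserves the skew/symplectic conditions in the colimit. For $sp$, Lemma \ref{L7.3} uses $\Phi(x)(J_r)=\epsilon_C(x)J_r$, which depends on $\Phi(x)(1)=\epsilon_C(x)1$ and $\Phi(x)(0)=0$ entrywise. Everything else follows formally from the earlier propositions.
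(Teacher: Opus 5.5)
Your proposal is correct and follows the same route as the paper. The maps come from $sk(\Phi)$ and $sp(\Phi)$ being Lie algebra measurings, obtained via Lemma \ref{L7.3} and the colimit over $r$, and the restriction to primitives comes from Proposition \ref{P5.1}; you simply make explicit what the paper's two-line proof leaves implicit. Those implicit details are compatibility with the stabilization maps, the degree-$0$ normalization $H^{sk(\Phi)}_0(y)(1)=\epsilon_C(y)1$, and the counit computation showing that primitives are sent to primitives.
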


\begin{proof} The maps in \eqref{7.10map} follow directly from the fact that $sk(\Phi):C\longrightarrow Hom_K(sk(R),sk(R'))$ and $sp(\Phi):C\longrightarrow Hom_K(sp(R),sp(R'))$  are measurings of Lie algebras. Since  $H_\bullet(sk(R))$ and $H_\bullet(sp(R))$ are  Hopf algebras, the last statement is clear from the general result of Proposition \ref{P5.1}. 

\end{proof} 

We now recall (see \cite[$\S$ 10.5.5]{Lod}) that there are isomorphisms
\begin{equation}\label{7.12dx}
Prim(H_\bullet(sk(R)))\overset{\cong}{\longrightarrow} H\mathcal D_{\bullet-1}(R) \qquad Prim(H_\bullet(sp(R)))\overset{\cong}{\longrightarrow} H\mathcal D_{\bullet-1}(R)
\end{equation} Our final result is to show that the isomorphisms in \eqref{7.12dx} are compatible the maps induced by measurings as in Proposition \ref{P7.2} and Proposition \ref{P7.4}. 

\begin{Thm}\label{T57.4}
Let $R$, $R'$ be involutive $K$-algebras. Let $\Phi:C\longrightarrow Hom_K(R,R')$ be a cocommutative measuring of involutive algebras from $R$ to $R'$.  For each $x\in C$, the following diagrams commute
\begin{equation}\label{7.13coma}
\begin{array}{lll}
\begin{CD}
Prim(H_\bullet(sk(R)))@>\cong>> H\mathcal D_{\bullet-1}(R)\\
@VH_\bullet^{sk(\Phi)}(x)VV @VVH\mathcal D_{\bullet-1}^\Phi(x) V \\
Prim(H_\bullet(sk(R')))@>\cong>> H\mathcal D_{\bullet-1}(R')\\
\end{CD} & \qquad &
\begin{CD}
Prim(H_\bullet(sp(R)))@>\cong>> H\mathcal D_{\bullet-1}(R)\\
@VH_\bullet^{sp(\Phi)}(x)VV @VVH\mathcal D_{\bullet-1}^\Phi(x) V \\
Prim(H_\bullet(sp(R')))@>\cong>> H\mathcal D_{\bullet-1}(R')\\
\end{CD} \\
\end{array}
\end{equation} where the isomorphisms in the horizontal arrows are as given by \eqref{7.12dx}.
\end{Thm}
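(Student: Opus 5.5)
The plan follows the proof of Theorem \ref{T5.4}, with the cyclic complex $\tilde C_\bullet$ replaced by the dihedral complex $\mathcal D_\bullet$. The first step is to recall from \cite[$\S$ 10.5.5]{Lod} how the isomorphisms \eqref{7.12dx} are built. For $sk$ one restricts the map
\[
\wedge^{n+1}gl_r(R)\xrightarrow{\theta^R_{n,r}}\tilde C_n(M_r(R))\xrightarrow{tr^R_{n,r}}\tilde C_n(R)
\]
to $\wedge^{n+1}sk_r(R)$ and then projects further onto the dihedral coinvariants $\mathcal D_n(R)=C_n(R)_{D_{n+1}}$. For $sp$ one does the same with $sp_{2r}(R)\subseteq gl_{2r}(R)$, inserting the matrix $J_r$ as in \cite[$\S$ 10.5]{Lod}. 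In both cases one passes to the colimit as $r\to\infty$ and restricts to primitives. So the key is to produce chain-level commutative squares
\[
\begin{CD}
\wedge^{n+1}sk_r(R) @>>> \mathcal D_n(R)\\
@VCE^{sk_r(\Phi)}_{n+1}(x)VV @VV\mathcal D^\Phi_n(x)V\\
\wedge^{n+1}sk_r(R') @>>> \mathcal D_n(R')
\end{CD}
\]
and the analogous squares for $sp_{2r}$.

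Next, for the skew-symmetric case I combine three ingredients. Lemma \ref{L7.3} shows that $sk_r(\Phi)$ is the restriction of $gl_r(\Phi)$, so the inclusions $sk_r\hookrightarrow gl_r$ commute with the measurings. Lemma \ref{L5.2} and the trace compatibility $\tilde C_n^\Phi(x)\circ tr^R_{n,r}=tr^{R'}_{n,r}\circ\tilde C_n^{M_r(\Phi)}(x)$ (diagram \eqref{comd5.16}) hold verbatim for any measuring of algebras. Finally, the projection $\tilde C_n(R)\to\mathcal D_n(R)$ commutes with $x$, because Proposition \ref{P7.2} shows that $C^\Phi_n(x)$ intertwines the $D_{n+1}$-action and hence descends compatibly to all the quotients involved.

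For the symplectic case there is one extra point. If the map to $\mathcal D_n(R)$ involves multiplying by the constant matrix $J_r$, I use the computation in the proof of Lemma \ref{L7.3}. For a matrix $\gamma$, the measuring gives $\Psi(x)(J_r\gamma)=\Psi(x_{(1)})(J_r)\Psi(x_{(2)})(\gamma)=J_r\Psi(x)(\gamma)$, since $x(1)=\epsilon_C(x)1$ and $J_r$ has entries in $K\cdot 1$. The same holds for right multiplication by $J_r$. So inserting $J_r$ into a tensor factor commutes with $C^{M_{2r}(\Phi)}_n(x)$, and the square for $sp_{2r}$ follows exactly as in the skew-symmetric case.

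Then I pass to the colimit $r\to\infty$; commutativity is preserved since everything is compatible with the stabilization inclusions. Taking homology gives a commutative square from $H_\bullet(sk(R))$ (resp. $H_\bullet(sp(R))$) to $H\mathcal D_{\bullet-1}(R)$. Proposition \ref{P7.4} shows that $H^{sk(\Phi)}_\bullet(x)$ and $H^{sp(\Phi)}_\bullet(x)$ preserve primitives, and restricting the square to primitives yields \eqref{7.13coma}. The main obstacle is the first step: pinning down the precise chain-level description of the Loday--Procesi map for $sp$ and checking that every constituent (antisymmetrization, trace, insertion of $J_r$ and signs, dihedral projection) is a ``natural'' operation commuting with $x$. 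The cocommutativity of $C$ handles the permutations, and the unit condition on the measuring handles the constant matrices.
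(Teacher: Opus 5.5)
Your proposal is correct and matches the paper's proof. The paper builds exactly the chain-level diagram $\wedge^{n+1}sk_r(R)\to\wedge^{n+1}gl_r(R)\to\tilde C_n(M_r(R))\to\tilde C_n(R)\to\mathcal D_n(R)$, using Lemma \ref{L5.2}, the trace compatibility and the dihedral projection; it then passes to the colimit, takes homology and restricts to primitives. Where the paper only says the symplectic case is ``proved similarly,'' your observation that $\Psi(x)(J_r)=\epsilon_C(x)J_r$, so that inserting $J_r$ commutes with $x$, is a harmless extra check covering that case.
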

\begin{proof} In the notation of Remark \ref{R3.2}, we recall that $\tilde C_n(R)=R^{\otimes n+1}/(1-t)$. From the expression for the action of the dihedral group $D_{n+1}$ on $C_n(R)=R^{\otimes n+1}$ in \eqref{dihed7a}, it follows that we have canonical epimorphisms $\tilde C_n(R)\twoheadrightarrow \mathcal D_n(R)=C_n(R)_{D_{n+1}}$. 
Combining with the commutative diagram \eqref{comd5.16} in the proof of Theorem \ref{T5.4}, we now have the commutative diagram
\begin{equation}\label{comd57.16}\small
\begin{CD}
CE_{n+1}(sk_r(R))=\wedge^{n+1}sk_r(R) @>>> CE_{n+1}(gl_r(R))=\wedge^{n+1}gl_r(R) @>\theta^A_{n,r}>>  \tilde C_n(M_r(R))@>tr^A_{n,r}>>\tilde C_n(R) @>>> \mathcal D_n(R) \\
@VCE_{n+1}^{sk_r(\Phi)}(x)VV @VCE_{n+1}^{gl_r(\Phi)}(x)VV @VV\tilde C_n^{M_r(\Phi)}(x)V  @VV\tilde C_n^\Phi(x)V  @VV\mathcal D_n^\Phi(x)V  \\
CE_{n+1}(sk_r(R'))=\wedge^{n+1}sk_r(R') @>>> CE_{n+1}(gl_r(R'))=\wedge^{n+1}gl_r(R')  @>\theta^{A'}_{n,r}>>  \tilde C_n(M_r(R'))@>tr_{n,r}^{A'}>> \tilde C_n(R')@>>> \mathcal D_n(R')\\
\end{CD}
\end{equation}  It is clear that the commutativity in \eqref{comd57.16} passes to the colimit as $r\longrightarrow \infty$. At the level of homologies, we therefore have a commutative diagram
\begin{equation}\label{7.15coma} 
\begin{CD}
H_\bullet(sk(R))@>>> H\mathcal D_{\bullet-1}(R)\\
@VH_\bullet^{sk(\Phi)}(x)VV @VVH\mathcal D_{\bullet-1}^\Phi(x) V \\
H_\bullet(sk(R'))@>>> H\mathcal D_{\bullet-1}(R')\\
\end{CD} 
\end{equation} We know (see \cite[$\S$ 10.5.5]{Lod}) that the isomorphism $Prim(H_\bullet(sk(R)))\overset{\cong}{\longrightarrow} H\mathcal D_{\bullet-1}(R)  $ in \eqref{7.12dx} is obtained by restricting the upper horizontal map in \eqref{7.15coma} to the primitive part of $H_\bullet(sk(R))$.  It is now clear that the left hand side diagram in \eqref{7.13coma} commutes. The case of the right hand side diagram in \eqref{7.13coma} is proved similarly. 
\end{proof}

\end{document}